\newcommand{\ip}[2]{\langle #1,#2 \rangle}
\newcommand{\st}{~|~}
\newtheorem{lem}{Lemma}
\newtheorem{claim}{Claim}
\newtheorem{thm}{Theorem}
\newtheorem{cor}{Corollary}
\newcommand{\T}{\mathcal{T}}
\newcommand{\N}{\mathcal{N}}
\newcommand{\x}{\textbf{x}}
\newcommand{\supp}{\mathrm{supp}}
\newtheorem*{rep@theorem}{\rep@title}
\newcommand{\newreptheorem}[2]{%
\newenvironment{rep#1}[1]{%
 \def\rep@title{#2 \ref{##1}}%
 \begin{rep@theorem}}%
 {\end{rep@theorem}}}
\theoremstyle{definition}
\newtheorem{example}{Example}[section]
\newtheorem{define}{Definition}
\DeclareMathOperator*{\E}{\mathbb{E}}
\title{A Quantitative Local Limit Theorem for Triangles in Random Graphs}
\author{Ross Berkowitz
\thanks{Research supported in part by NSF grants CCF-1253886 and CCF-1540634, and the US-Israel Binational Science Foundation grant 2014359.}}
\begin{document}
\maketitle

\begin{abstract}
In this paper we prove a quantiative local limit theorem for the distribution of the number of triangles in the Erd\H{o}s-Renyi random graph $G(n,p)$, for a fixed $p\in (0,1)$.  This proof is an extension of the previous work of Gilmer and Kopparty, who
proved that the local limit theorem held asymptotically for triangles.  Our work gives bounds on the $\ell^1$ and $\ell^\infty$ distance of the triangle distribution from a suitable discrete  normal.  
\end{abstract}

\section{Introduction}
This paper is concerned with the distribution of the number of triangles appearing in an Erd\H{o}s-Renyi random graph $G(n,p)$ (a graph with $n$ vertices where each edge is present independently with probability $p$).  Recently, \cite{JustinTriangles} showed a local limit theorem in this context which says that the distribution of the number of triangles approaches the discrete normal.  Our main results show \textit{quantitative} bounds, both pointwise and global, on how far the distribution of the number of triangles
in a random graph can vary from a normal distribution.  In particular, if $\T$ is the random variable corresponding to the number of triangles in $G(n,p)$
we show that for all $k\in \mathbb{Z}$ and $\epsilon>0$,
$$\Pr[\T=k]=\frac{1}{\sqrt{2\pi}\sigma}\exp\left(-\frac{\left(k-\mu\right)^2}{2\sigma^2}\right)+O(n^{-2.5+\epsilon})$$
where $\mu=\E[\T]=p^3{n\choose 3}$ and $\sigma = Var(\T)$.
From this we are also able to obtain a quantitative bound on the $\ell^1$ distance of $\T$ from a suitable discrete normal:
$$\sum_{k\in \mathbb{N}}\left|\Pr(\T=k)-\frac{1}{\sqrt{2\pi}\sigma}\exp\left(-\frac{\left(k-p^3{n\choose 3}\right)^2}{2\sigma^2}\right)\right|=O(n^{-.5+\epsilon})$$

\subsection{History}

The study of subgraph counts dates back to the very beginning of random graphs, when Erd\H{o}s and Renyi proved in 1960 \cite{ErdosRenyi} that certain subgraph counts behaved in expected ways by using the second moment method.  In the 1980's there were several papers studying which subgraph counts obeyed a central limit theorem  (see \cite{Kar83, Kar84, NW88}).   For example, in this period a central limit theorem was shown for the triangle counting random variable $\T$, which stated that for any real numbers $a<b$
$$\Pr\left[\T\in[\mu+a\sigma_n, \mu+b\sigma_n]\right]=\frac{1}{\sqrt{2\pi}}\int_{a}^b e^{-t^2/2}dt+o(1)$$
  
This line of work eventually found a complete solution in the work of 
Ruci\'nski \cite{Ruc88} who gave a characterization for when subgraph counts obeyed a central limit theorem.
In 1989 there was progress made on showing central limit theorems with quantitative bounds in the work of Barbour, Karo\'{n}ski and Ruci\'nski \cite{BKR}.
Slightly afterwards Janson and Nowicki \cite{JansonNowicki} gave alternate arguments for central limit theorems using the
language of U-statistics using a good basis for functions on the probability space of graphs.  

If the edge probability $p\sim cn$ for some constant $c$, then Erd\H{o}s and Renyi \cite{ErdosRenyi} showed that the number of triangles in $G(n,p)$ converges to a Poisson distribution.  This result was a local limit theorem, as it estimated the pointwise probabilities $\Pr[\T=k]$ for $k$ constant.  Further, R\"{o}llin and Ross \cite{Ross} showed a local limit theorem when $p\sim cn^{\alpha}$ for $\alpha\in [-1,-\frac12]$.   In this regime they showed that the triangle counting distribution converges to a translated Poisson distribution (which is in turn close to a discrete Gaussian) in both the $\ell_\infty$ and total variation metrics.

In 2014, Gilmer and Kopparty \cite{JustinTriangles} proved a local limit theorem for triangle counts for $G(n,p)$  in the regime where $p$ is a fixed constant.  In particular they proved that
$$\Pr[\T=k]=\frac{1}{\sqrt{2\pi}\sigma}\exp\left(-\frac{\left(k-\mu\right)^2}{2\sigma_n^2}\right)\pm o(n^{-2})$$
It should be noted that this is largely a \textit{qualitative} result, as the main term has size $\Theta(n^{-2})$ while the error term is $o(n^{-2})$.  This type of result should also be contrasted with the central limit theorem given above.  This theorem gives an estimate for the probability of having \textit{exactly} $k$ triangles or differing from the expected number of triangles by exactly 17.  The central limit theorems estimate the probability of having a number
of triangles in an interval of length proportional to the standard deviation.

The proof in \cite{JustinTriangles} proceeded by using the characteristic function.  The main step there was to show that $|\varphi(t)-\varphi_n(t)|$ is small for $t\in [-\pi \sigma_n,~\pi\sigma_n]$, where $\varphi$ represents the characteristic function of the standard normal distribution, and $\varphi_n$ represents the characteristic function the triangle counting function $\T$.


\subsection{Our Results}

We improve the result of Gilmer and Kopparty by adding a quantitative estimate for the convergence of $\T$ to the normal.  We strengthen their bound to give explicit distance bounds.

\begin{thm} \label{Sup Main}
For any $k\in \mathbb{N}$ we have that
$$\Pr[\T=k]=\frac{1}{\sqrt{2\pi}\sigma_n}e^{-\frac{\left(k-p^3{n\choose 2}\right)^2}{2\sigma_n^2}}+O(n^{-2.5+\epsilon})$$
\end{thm}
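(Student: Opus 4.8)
The plan is to prove the local limit theorem via Fourier inversion, following the standard template: write
$$\Pr[\T = k] = \frac{1}{2\pi\sigma_n}\int_{-\pi\sigma_n}^{\pi\sigma_n} e^{-it(k-\mu)/\sigma_n}\,\varphi_n(t/\sigma_n)\,dt,$$
where $\varphi_n(s) = \E[e^{is\T}]$ is the characteristic function of $\T$, and compare this to the analogous integral representation of the discrete Gaussian density, whose integrand is $e^{-it(k-\mu)/\sigma_n}e^{-t^2/2}$ over the same range (with exponentially small tails outside). Subtracting, the pointwise error is controlled by $\frac{1}{2\pi\sigma_n}\int_{-\pi\sigma_n}^{\pi\sigma_n}\bigl|\varphi_n(t/\sigma_n) - e^{-t^2/2}\bigr|\,dt$ plus a negligible tail term. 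Since $\sigma_n = \Theta(n^2)$, to land at an error of $O(n^{-2.5+\epsilon})$ I need the integral of $|\varphi_n(t/\sigma_n) - e^{-t^2/2}|$ to be $O(n^{-0.5+\epsilon})$, i.e., I need a genuinely quantitative version of the characteristic-function estimate that Gilmer and Kopparty proved only qualitatively.

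The integral splits into three regimes, and the work is to bound each quantitatively. \textbf{(i) The central regime} $|t| \le T_0$ for some slowly growing cutoff (say $T_0 = n^{\delta}$): here I expand $\varphi_n(t/\sigma_n)$ using the cumulants of $\T$. The mean and variance are matched by construction, so the leading correction comes from the third cumulant $\kappa_3(\T) = \Theta(n^3)$, giving $\varphi_n(t/\sigma_n) = e^{-t^2/2}(1 + O(\kappa_3 |t|^3/\sigma_n^3)) = e^{-t^2/2}(1 + O(|t|^3 n^{-3}))$; integrating $|t|^3 e^{-t^2/2}$ against this over the central range contributes $O(n^{-3})$, comfortably inside the budget. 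Controlling the cumulant expansion rigorously — bounding the remainder in the Taylor expansion of $\log\varphi_n$ — is routine but needs care with the higher cumulant growth rates. \textbf{(ii) The intermediate regime} $T_0 \le |t| \le \eta\sigma_n$ for small constant $\eta$: here one shows $|\varphi_n(t/\sigma_n)|$ decays, e.g. by conditioning on a well-chosen subset of edges so that $\T$ becomes a sum of many near-independent contributions, yielding a bound like $e^{-c t^2}$ or at worst $e^{-c n^{\delta}}$ for the whole range, which integrates to something negligible. \textbf{(iii) The far regime} $\eta\sigma_n \le |t| \le \pi\sigma_n$, equivalently $s = t/\sigma_n$ bounded away from $0$: this is the hardest part and the main obstacle.

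\textbf{The main obstacle} is showing $|\varphi_n(s)| = |\E[e^{is\T}]|$ is extremely small — small enough that multiplying by $\sigma_n = \Theta(n^2)$ still leaves $o(n^{-2.5})$ — uniformly for $s$ in a compact subset of $(0, 2\pi)$ (or all $s$ away from integer multiples of $2\pi$, since $\T$ is integer-valued). The approach I would take is to exploit that $\T$, as a polynomial in the independent edge indicators, has rich additive structure: fix all edges incident to a carefully chosen partition of the vertex set except for a large matching or a large collection of disjoint "gadgets," each of which independently shifts $\T$ by an amount that is non-degenerate modulo the relevant lattice; then $|\varphi_n(s)|$ factors (after conditioning) into a product over $\Theta(n^2)$ gadgets, each contributing a factor bounded by $1 - c\|s\cdot(\text{shift})\|^2$ or similar, giving $|\varphi_n(s)| \le e^{-cn^2/\log n}$ or even $e^{-\Omega(n^2)}$ — i.e., quantitatively tiny. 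Making this decomposition work uniformly in $s$, with an honest quantitative bound on each gadget's contribution (rather than the qualitative "bounded away from $1$" that suffices for a plain LLT), is where the real effort goes; the remaining assembly of the three regimes into the stated $O(n^{-2.5+\epsilon})$ bound is then bookkeeping.
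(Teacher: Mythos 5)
Your top-level scaffolding is the same as the paper's: Fourier inversion reduces the pointwise bound to $\frac{1}{\sigma_n}\int_{-\pi\sigma_n}^{\pi\sigma_n}|\varphi_Z(t)-e^{-t^2/2}|\,dt$, and that integral gets split into small, intermediate, and far ranges. For the far range the paper simply cites the Gilmer--Kopparty estimate $|\varphi_Z(t)|=O(|t|^{-50})$ for $|t|\ge n^{1/2+\epsilon}$ (their Theorem~5), which is exactly the kind of ``gadget'' bound you sketch; you do not need the exponential decay you aim for, and you should not re-derive it. For the intermediate range, the paper's conditioning on a $k$-regular subgraph $H$ (with $k$ chosen as a function of $t$) is essentially what you describe, so the structure is sound there. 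Those two pieces are broadly on track.

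The genuine gap is in your central regime. You propose a cumulant expansion of $\log\varphi_n$ and claim $\kappa_3(\T)=\Theta(n^3)$, giving a correction $O(|t|^3 n^{-3})$. This is miscalculated: $\T$ is concentrated on its degree-1 Fourier part, which behaves like a normalized sum of $\binom{n}{2}$ i.i.d.\ Bernoullis, so $\kappa_3(Z)=\Theta(1/\sqrt{\binom{n}{2}})=\Theta(1/n)$ and hence $\kappa_3(\T)=\Theta(n^5)$, not $\Theta(n^3)$. More importantly, the dominant error in this regime does not come from the third cumulant at all; it comes from the dependence encoded in the weight-$\ge 2$ Fourier mass, which has $L^2$ norm $\Theta(1/\sqrt{n})$. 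A cumulant remainder bound that absorbs this dependence is not ``routine but needs care'' for a degree-3 polynomial in dependent-ish coordinates; making $\log\varphi_n$ expand cleanly is where you would get stuck. The paper's key move, which you are missing, is to avoid cumulants entirely: write $Z=X+Y$ where $X=\sum_e Q\chi_e$ is an honest i.i.d.\ sum carrying the weight-1 mass and $Y$ carries everything else, apply the classical Berry--Esseen characteristic-function bound to $X$, and control the difference by the crude estimate $|\E[e^{it(X+Y)}]-\E[e^{itX}]|\le |t|\,\E|Y| = O(|t|/\sqrt{n})$ via the mean value theorem. This yields the bound $O(t^3 e^{-t^2/3}/n + t/\sqrt{n})$ of Lemma~\ref{smalltbound}, whose $t/\sqrt{n}$ term is what actually dominates after integration and produces the $n^{-0.5+\epsilon}$ in the final answer. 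Your $O(n^{-3})$ claim for this regime both relies on a wrong cumulant and overlooks the real bottleneck.
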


For $k=\mu_n+O(\sigma_n)$ this shows that $\Pr[\T=k]$ is within a $(1+O(n^{-\frac12+\epsilon}))$ multiplicative factor of $\frac{1}{\sqrt{2\pi}\sigma}\exp\left(-\frac{\left(k-\mu\right)^2}{2\sigma_n^2}\right)$, while the best known previous bound could only show a factor of $(1+o(1))$.  A polynomial factor is also the best possible bound, as even the binomial distribution of ${n \choose 3}$ i.i.d. summands differs from the normal by a polynomial factor.
As a consequence of Theorem \ref{Sup Main} we also find a quantitative bound on the $\ell^1$ distance between $T$ and the normal.

\begin{thm}\label{L1 Main}
$$\sum_{t\in \mathbb{N}}\left|\Pr(\T=t)-\frac{1}{\sqrt{2\pi}\sigma}\exp\left(-\frac{\left(t-p^3{n\choose 3}\right)^2}{2\sigma^2}\right)\right|=O(n^{-0.5+\epsilon})$$
\end{thm}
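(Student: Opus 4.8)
The plan is to deduce Theorem~\ref{L1 Main} from the pointwise estimate of Theorem~\ref{Sup Main} by a truncation argument, handling the central range and the tails separately. Throughout write $g_n(t)=\frac{1}{\sqrt{2\pi}\sigma_n}\exp\!\left(-\frac{(t-\mu_n)^2}{2\sigma_n^2}\right)$ for the discrete Gaussian density appearing in the statement, and recall that $\sigma_n=\Theta(n^2)$, since the variance of $\T$ is dominated by the $\Theta(n^4)$ ordered pairs of triangles sharing exactly one edge. Fix a large constant $C$, to be chosen at the end, and split
$$\sum_{t\in\mathbb{N}}\bigl|\Pr(\T=t)-g_n(t)\bigr| \;=\; \underbrace{\sum_{|t-\mu_n|\le C\sigma_n\sqrt{\log n}}\bigl|\Pr(\T=t)-g_n(t)\bigr|}_{S_{\mathrm{bulk}}} \;+\; \underbrace{\sum_{|t-\mu_n|> C\sigma_n\sqrt{\log n}}\bigl|\Pr(\T=t)-g_n(t)\bigr|}_{S_{\mathrm{tail}}}.$$

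For $S_{\mathrm{bulk}}$ I would apply Theorem~\ref{Sup Main} term by term: there are at most $2C\sigma_n\sqrt{\log n}+1=O(n^{2}\sqrt{\log n})$ integers in the range, each contributing $O(n^{-2.5+\epsilon})$, so $S_{\mathrm{bulk}}=O(n^{2}\sqrt{\log n}\cdot n^{-2.5+\epsilon})=O(n^{-0.5+\epsilon})$, the factor $\sqrt{\log n}=n^{o(1)}$ being absorbed into the polynomial error. The exponents of the two theorems are aligned precisely so that multiplying the pointwise error by $\sigma_n=\Theta(n^2)$ produces the target $\ell^1$ bound; note one cannot simply extend this argument to all $t$, since summing $O(n^{-2.5+\epsilon})$ over the full range $0\le t\le\binom{n}{3}$ would give a useless bound.

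For $S_{\mathrm{tail}}$ it suffices, by the triangle inequality, to bound $\sum_{|t-\mu_n|>C\sigma_n\sqrt{\log n}}g_n(t)$ and $\Pr\!\left(|\T-\mu_n|>C\sigma_n\sqrt{\log n}\right)$ separately, and to show each is $o(n^{-0.5})$. The Gaussian sum is controlled by comparison with the corresponding integral, giving $\sum_{|t-\mu_n|>C\sigma_n\sqrt{\log n}}g_n(t)=O\!\left(e^{-C^2(\log n)/2}\right)=O(n^{-C^2/2})$. For the deviation probability I would use the edge-exposure martingale: revealing the $\binom{n}{2}$ potential edges one at a time changes the triangle count by at most $n-2$ at each step, so the Azuma--Hoeffding bounded-difference inequality gives
$$\Pr\!\left(|\T-\mu_n|>\lambda\right)\;\le\;2\exp\!\left(-\frac{\lambda^2}{2\binom{n}{2}(n-2)^2}\right)\;=\;2\exp\!\left(-\Omega(\lambda^2/n^4)\right),$$
and with $\lambda=C\sigma_n\sqrt{\log n}=\Theta(Cn^2\sqrt{\log n})$ and $\sigma_n^2=\Theta(n^4)$ this is $\exp\!\left(-\Omega(C^2\log n)\right)=O(n^{-\Omega(C^2)})$. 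Choosing $C$ large enough — so that both $C^2/2$ and the constant hidden in the martingale exponent exceed, say, $1$ — makes $S_{\mathrm{tail}}=O(n^{-1})$, which is swallowed by the $O(n^{-0.5+\epsilon})$ term; combining the two estimates proves the theorem.

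The essentially only nontrivial ingredient beyond Theorem~\ref{Sup Main} is a tail bound for $\T$ of the form $\Pr(|\T-\mu_n|>\lambda)\le\exp(-\Omega(\lambda^2/\sigma_n^2))$ in the relevant range $\lambda\asymp\sigma_n\sqrt{\log n}$; the crude Azuma estimate above already delivers this, and a sharper statement (via Janson's inequality or Kim--Vu polynomial concentration) is not needed. The remaining work — verifying $\sigma_n=\Theta(n^2)$, counting the integers in the central range, and estimating the Gaussian tail — is routine, so the only point requiring care is keeping the constants straight so that $C$ can indeed be chosen to kill $S_{\mathrm{tail}}$.
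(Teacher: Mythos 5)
Your proposal is correct and follows the same basic structure as the paper's proof: truncate at a central window around $\mu_n$, bound the bulk by multiplying the pointwise error of Theorem~\ref{Sup Main} by the number of integers in the window (roughly $\sigma_n\cdot\mathrm{polylog}(n)=\Theta(n^2\,\mathrm{polylog}(n))$), and bound the tail by a deviation estimate for $\T$ plus the Gaussian tail. The paper packages this truncation as a general-purpose Lemma~\ref{L1 Lemma} (expressed in terms of the normalized variable $Z_n$ and the lattice $\mathcal{L}_n$ with spacing $h_n=1/\sigma_n$), but the bookkeeping is identical to your bulk/tail split.

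The one place you diverge is the tail bound: the paper cuts at $A=\log^2 n$ (in units of $\sigma_n$) and invokes the hypercontractivity concentration inequality (Theorem~\ref{Hypercontractivity}) for degree-$3$ polynomials, giving $\Pr(|Z_n|>\log^2 n)=n^{-\Omega(\log n)}$, while you cut at $A=C\sqrt{\log n}$ and use the edge-exposure martingale with bounded differences $n-2$, giving $\Pr(|Z_n|>C\sqrt{\log n})=n^{-\Omega(C^2)}$. Both comfortably exceed the $o(n^{-0.5})$ needed. Your Azuma route is more elementary and self-contained; the hypercontractivity route is overkill here but is a tool the paper already carries for Claim~\ref{GoodEvent}, so it costs nothing extra. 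One small care point in your argument: the hidden constant in the Azuma exponent is $\sigma_n^2/(2\binom{n}{2}(n-2)^2)\sim p^5(1-p)/4<1$, so $C$ does need to be taken large depending on $p$; you flag this, and it is fine since $p$ is fixed.
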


The results in \cite{JustinTriangles} were not enough to imply $\ell^1$ distance bounds, so this is the first result of this kind for triangle counts.
Our arguments are based on giving better bounds on the characteristic function of $\T$.  The
 main improvments come from viewing the triangle counting function as a function over $\{0,1\}^{[n]\choose 2}$, and choosing a suitable basis for this space of functions.  This method is closely related to the method employed by Janson and Nowicki in \cite{JansonNowicki}.
In paricular, we will choose the $p$-biased Fourier basis over this space of functions given, with basis functions denoted $\chi_S$ where $S\subset {[n]\choose 2}$.  That is, each basis element is a function depending on some subset of the possible
edges in our graph.  The main mass of the triangle counting function will come from basis elements of the form $\chi_e$, where $e$ is some edge in ${[n]\choose 2}$.  In other words, we will find that
$\T$ is highly concentrated on its weight 1 Fourier coefficients.  This allows us to show that $\T$ may be reasonably well approximated as simply a linear function of the number of edges in the random graph.  Informally, this follows the intuition that
if one wanted to know how many triangles are in a fixed graph $G$, a reasonable estimator would be to simply ask how many edges are in the graph, and scale appropriately.

The actual estimation will be performed in two steps.  First we will normalize $\T$ to have mean 0 and variance 1 by defining $Z:=\frac{\T-\mu}{\sigma}$.  Then we will split $Z$ up into two pieces $Z=X+Y$, where $X$ carries the weight 1 fourier
terms which dominate $Z$, and $Y$ contains the higher order terms, which we will treat as error terms.  We then use as blunt a tool as the mean value theorem to estimate the characteristic function by saying 
$\E[e^{itZ}]=\E[e^{itX}+O(|tY|)]$.  Since $X$ is a sum of i.i.d. random variables and $Y$ is small, we will get that this converges to the characteristic function of the normal distribution when $t$ is small.

For slightly larger $t$ we adapt this method slightly, by first revealing some $k$-regular subgraph and then performing our estimates given this information.  This will shrink the size of $Y$ by a factor of $(k/n)^2$, but only shrink $X$ by a factor of $k/n$.  This gives us a better error term, at the cost of only slightly shrinking our main term.  For this part of the argument we cannot give an exact main term for $|\varphi_Z(t)|$, as we could in the first method.  However for $t$ large, because
the normal has very small characteristic function it suffices simply to show that $|\varphi_Z(t)|$ is very small as well.

\subsection{Organization of this Paper}
In section \ref{prelimsection} we set up our notation and introduce some facts which will be necessary for the later sections.  Section \ref{mainsection} contains the statements and proofs of our main results, modulo the main technical lemmas.  In section \ref{analysissection} we examine the decomposition of $\T$ with respect to the $p$-biased Fourier basis, and in section \ref{lemmasection} we exploit this decomposition to prove our main lemmas.  Finally in section \ref{graphstatsection} we extend these arguments to a more general setting to capture larger subgraph counts.

\section{Preliminaries and Notation} \label{prelimsection}
We will be working with a random variable which is defined as a graph function applied to an Erd\H{o}s-Renyi random graph $G(n,p)$.  We will be working in the regime where our probability $p$ is a fixed constant, and $n\to \infty$.
We will realize our probability space as drawing $\x\in \{0,1\}^{n\choose 2}$ where each coordinate of $\x$ is labelled by an edge $e\in {[n]\choose 2}$, and we have that for all edges, $\x_e$ is 0 with probability $1-p$ and 1 with probability $p$.  ${[n] \choose 2}$ refers equivalently to either the set of all pairs of distinct elements from $[n]$, or the set of possible edges of a graph with vertex set $[n]$.

Continuing our notation from the abstract, we use $\T:\{0,1\}^{n\choose 2}\to \mathbb{N}$ to denote the triangle counting function, which returns the number of triangles in the graph with edge set given by the indicator vector $\{0,1\}^{n\choose 2}$.  One might note that the random variable $\T$ depends on both the probability $p$, and the size of the vertex set $n$ in question.  We will often supress the dependence on $n$ and $p$, as we will be considering $p$ to be fixed
and our analysis will be done for a generic $n$, with limits only taken in the proof of the main theorem.

\subsection{$p$-Biased Fourier Basis}

To apply our analysis we use the $p$-biased Fourier basis for functions on this probability space.  We define this as follows.  For each edge $e\in {[n]\choose 2}$ we define $\chi_e:\{0,1\}^{n \choose 2}\to \mathbb{R}$ as follows: 
$$\chi_e:=\chi_e(\x):=\frac{\x_e-p}{\sqrt{p(1-p)}}=
\begin{cases}
-\sqrt{\frac{p}{1-p}}&\mbox{if }\x_e=0\\
\sqrt{\frac{1-p}{p}} &\mbox{if }\x_e=1
\end{cases}
$$

This is just the transform of the bernoulli random variable $\x_e$ so that it has mean 0 and variance 1.  Now for an arbitrary set $S\subset[n]$ we can define
$$\chi_S:=\chi_S(\x):=\prod_{e\in S}\chi_e$$
We note that if we take our inner product of two functions $f,g:\{0,1\}^{n\choose 2}\to \mathbb{R}$ to be defined by $\E[fg]$, then $\{\chi_S~|~S\subset [n]\}$ is an orthonormal basis  (See \cite{ODonnell} chapter 10  for more detail on this topic).

For any function $f:\{0,1\}^{{n\choose 2}}\to \mathbb{R}$, if we define the Fourier transform $\hat f:\{0,1\}^{n\choose 2}\to \mathbb{R}$  to be
$$\hat f(S):=\E[f(x)\chi_S(x)]$$
then by orthonormality we have that 
$$f(\x)=\sum_{S\subset {[n]\choose 2}} \hat f(S)\chi_S(\x)$$

\subsection{Probability Terminology and Notation}
In proving limit theorems, it is convenient to normalize the family of random variables to have mean 0 and variance 1, so throughout this chapter we will usually work with the related random variable $Z:\{0,1\}^{[n]\choose 2} \to \mathbb{R}$
$$Z(\x):=Z_n(\x):=\frac{\T-\mu}{\sigma}$$

We will frequently refer to the characteristic function of $Z$ as $\varphi_Z(t):=\E[e^{itZ}]$.  Most of the work will be focused on studying $\varphi_Z$, and showing it is close to $e^{-t^2/2}$.

We will also throughout the chapter label the variance of $\T$ as $\sigma^2:=\sigma_n^2:=\E[\T^2]-\E[\T]^2$.  A consequence of orthonormality gives us the following result, sometimes called Parseval's Theorem:

\begin{equation}\label{parseval}
\sigma^2:=\E[\T^2]-\E[\T]^2=\left(\sum_{S\subset {[n]\choose 2}} \hat \T(S)^2\right)-\hat \T(\varnothing)^2=\sum_{S\neq \varnothing} \hat \T(S)^2
\end{equation}

\subsection{Some Graph Notation}
Let $G$ be a graph with vertex set $[n]$ and edge set $E\subset {[n] \choose 2}$.  Given a triangle $\triangle$ with vertex set $\{v_1,v_2,v_3\}\subset [n]$ we will use the notation $e\in \triangle$ to denote that $e$ is an edge in the triangle $\triangle$ i.e. $e\in {\{v_1,v_2,v_3\} \choose 2}$.  Additionally we will occasionally identify a triangle $\triangle$ with its edge set.  That is, if we have $S\subset {[n]\choose 2}$ and we write $S=\triangle$, that means $S$ is the edge set of some triangle.

Also we will frequently need to refer to the case where $e_1$ and $e_2$ are two edges which are incident to a common vertex (i.e. $e_1=(v_1,v_2)$ and $e_2=(v_2,v_3)$).  We will denote this as $e_1\sim e_2$.

\subsection{Notation for function restrictions}

Often we will have a function $f:\{0,1\}^n\to \mathbb{R}$, and we will want to refer to the function obtained from $f$ by restricting some input coordinates to have certain values.  In particular assume that we have $H\subset [n]$ some fixed subset of input variables.  Then for $\beta\in \{0,1\}^{H^c}$ we will define $f_\beta:\{0,1\}^H\to \mathbb{R}$ by
$$f_\beta(\alpha)=f(\alpha,\beta)$$

\subsection{Ingredients for the Proof}
In this section we cite some useful results from the literature.  We will need the following Hypercontractivity result which bounds the probability that a low degree boolean function deviates from its mean.
\begin{thm}[\cite{ODonnell} Theorem 10.24] \label{Hypercontractivity}
Let $f:\{0,1\}^{n}\to \mathbb{R}$ be a polynomial of degree $k$, and $\lambda:=\min(p,1-p)$.  If $x\in \{0,1\}^n$ is chosen by setting each coordinate independently to be 1 with probability $p$ and 0 with probability $1-p$ then for any $t\ge \sqrt{2e/\lambda}^k$,
$$\Pr\left(|f(x)|\ge t\|f\|_2\right)\le \lambda^k\exp\left(-\frac{k}{2e}\lambda t^{\frac{2}{k}}\right)$$
\end{thm}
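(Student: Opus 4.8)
The plan is to obtain this tail estimate from the hypercontractive inequality for the $p$-biased cube in the textbook way: convert ``low degree'' into a bound on high moments, apply Markov's inequality to $|f(x)|^q$, and then optimize over the free parameter $q$. This is precisely the proof given in \cite{ODonnell}, so the task is really to recall how the pieces fit together.

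The engine is the $(2 \to q)$-hypercontractive moment comparison: for every real $q \ge 2$ and every $f$ of degree at most $k$,
$$\|f\|_q \le \left(\frac{q-1}{\lambda^{\,1-2/q}}\right)^{k/2}\|f\|_2 .$$
One establishes this by first proving the two-point hypercontractive inequality $\|T_{\rho_q} g\|_q \le \|g\|_2$ for functions of a single $p$-biased coordinate, where $\rho_q$ is the appropriate threshold (of order $\sqrt{\lambda/q}$ for large $q$) -- this single-variable estimate is the \emph{only} place where $\lambda = \min(p,1-p)$ enters -- and then tensoring over the ${n\choose 2}$ coordinates to get $\|T_{\rho_q} h\|_q \le \|h\|_2$ for all $h$ on $\{0,1\}^{n\choose 2}$. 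Applying this to the function $h$ defined by $\hat h(S) = \rho_q^{-|S|}\hat f(S)$ does the job: $T_{\rho_q} h = f$, while $\deg f \le k$ forces $\|h\|_2^2 = \sum_S \rho_q^{-2|S|}\hat f(S)^2 \le \rho_q^{-2k}\|f\|_2^2$, so $\|f\|_q = \|T_{\rho_q} h\|_q \le \|h\|_2 \le \rho_q^{-k}\|f\|_2$, which is the displayed bound once $\rho_q$ is the threshold from \cite{ODonnell}.

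With the moment bound in hand, for any $t>0$ and any $q \ge 2$,
$$\Pr\!\left(|f(x)| \ge t\|f\|_2\right) = \Pr\!\left(|f(x)|^q \ge t^q\|f\|_2^q\right) \le \frac{\|f\|_q^q}{t^q\|f\|_2^q} \le \lambda^k\, t^{-q}\left(\frac{q-1}{\lambda}\right)^{qk/2},$$
using that $\big(\tfrac{q-1}{\lambda^{1-2/q}}\big)^{qk/2} = \lambda^{k}\big(\tfrac{q-1}{\lambda}\big)^{qk/2}$. Now take $q = \tfrac{\lambda}{e}\, t^{2/k}$. This choice satisfies $q \ge 2$ exactly when $t \ge (2e/\lambda)^{k/2} = \sqrt{2e/\lambda}^{\,k}$, which is precisely the hypothesis of the theorem. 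For this $q$ one has $\big(\tfrac{q-1}{\lambda}\big)^{qk/2} \le (q/\lambda)^{qk/2} = (t^{2/k}/e)^{qk/2} = t^{q} e^{-qk/2}$, so the two powers of $t$ cancel and the estimate collapses to $\lambda^k e^{-qk/2} = \lambda^k \exp\!\big(-\tfrac{k}{2e}\lambda\, t^{2/k}\big)$, as claimed.

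The one genuinely nontrivial ingredient, and the step I expect to be the main obstacle, is the $p$-biased hypercontractive inequality itself -- equivalently, the sharp two-point inequality for a single biased bit with the correct threshold. For $p=\tfrac12$ this is the classical Bonami--Beckner inequality; for general fixed $p$ the base of the exponential degrades by a power of $\lambda$, and pinning down that dependence is exactly what produces the factors $\lambda^{1-2/q}$ above (hence the $\lambda^k$ prefactor in the final bound). Everything downstream -- the duality trick that trades degree for moments, Markov's inequality, and the one-variable optimization over $q$ -- is routine bookkeeping.
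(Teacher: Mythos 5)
Your proposal is correct and reproduces the standard argument from \cite{ODonnell} (the $(2\to q)$ hypercontractive moment bound for the $p$-biased cube, followed by Markov on $|f|^q$ and the choice $q=\tfrac{\lambda}{e}t^{2/k}$), which is exactly the source the paper cites for this theorem without reproving it. The algebra -- the identity $\bigl(\tfrac{q-1}{\lambda^{1-2/q}}\bigr)^{qk/2}=\lambda^k\bigl(\tfrac{q-1}{\lambda}\bigr)^{qk/2}$, the equivalence $q\ge 2\iff t\ge\sqrt{2e/\lambda}^{\,k}$, and the cancellation yielding $\lambda^k e^{-qk/2}$ -- all checks out.
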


We will also use some of the existing bounds on the characteristic function of $\T$, which were derived in Gilmer-Kopparty.  We slightly modify their result to have a different choice of numbers, but the proof remains unchanged.
\begin{lem}[GK \cite{JustinTriangles} Theorem 5]\label{Justin}
Fix $\epsilon>0$.  If $\varphi_n(t)$ is the characteristic function of $Z=\frac{\T-p^3{n\choose 3}}{\sigma}$, then for $|t|\in [n^{.5+\epsilon},\pi\sigma_n]$ it holds that $|\varphi_n(t)|=O(|t|^{-50})$.
\end{lem}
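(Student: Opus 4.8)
The plan is to bound $|\varphi_n(t)|=\big|\E[e^{i\theta\T}]\big|$, where $\theta:=t/\sigma\in[\Omega(n^{-1.5+\epsilon}),\pi]$ (shifting a random variable leaves $|\varphi|$ unchanged). The recurring idea is to reveal most of the edge variables so that $\T$ becomes, conditionally, a sum of independent pieces, bound the conditional characteristic function factor by factor, and then average the resulting product bound over the revealed edges.

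For the bulk of the range I would fix a (near-)perfect matching $M\subset{[n]\choose 2}$ and reveal every edge outside $M$. Since two vertex-disjoint edges lie in no common triangle, conditionally $\T=\T_0+\sum_{e\in M}c_e\,\x_e$, where $\T_0$ and the codegrees $c_e=\sum_{x\notin\{a,b\}}\x_{ax}\x_{bx}$ (for $e=\{a,b\}$) depend only on the revealed edges, and the $\x_e$, $e\in M$, stay independent Bernoulli$(p)$; hence
$$\big|\E[e^{i\theta\T}]\big|\le\E_{\mathrm{rev}}\!\left[\prod_{e\in M}\big|1-p+pe^{i\theta c_e}\big|\right]\le\E_{\mathrm{rev}}\!\left[\exp\!\Big(-2p(1-p)\sum_{e\in M}\sin^2(\theta c_e/2)\Big)\right].$$
Each $c_e$ is, up to the $O(1)$ edges it shares with the other codegrees, a $\mathrm{Bin}(n-2,p^2)$ variable, so evaluating the binomial characteristic function gives $\E[\sin^2(\theta c_e/2)]\ge c_1\min\{1,\,n\theta^2+\sin^2 A\}$ for a constant $c_1(p)>0$, with $A:=\theta p^2(n-2)/2=\Theta(t/n)$; thus $\E\big[\sum_{e\in M}\sin^2(\theta c_e/2)\big]\gtrsim n\min\{1,\,t^2/n^3+\sin^2 A\}$. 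This is $\Omega(n^{2\epsilon})$ whenever $t\le cn$ for a small constant $c$ (so that $A\in(0,\pi/2]$ with $A=\Omega(n^{-1/2+\epsilon})$) and is $\Omega(\log n)$ once $t\ge C_1n\sqrt{\log n}$ for $C_1$ large enough. Since flipping a single revealed edge changes at most two of the $c_e$ by at most $1$, hence changes $\sum_{e\in M}\sin^2(\theta c_e/2)$ by at most $\theta$, McDiarmid's inequality keeps this sum within a factor $2$ of its mean outside an event of probability $o(n^{-100})$, on which I would use $|\E[e^{i\theta\T}]|\le 1$. This proves the bound for all $t\in[n^{.5+\epsilon},cn]\cup[C_1n\sqrt{\log n},\pi\sigma]$.

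The difficult regime --- where I expect the real work to lie --- is the ``resonant'' band $t\in[cn,\,C_1n\sqrt{\log n}]$, in particular the windows around $t\approx k\pi\sqrt{2p(1-p)}\,n$ where $A\in\pi\mathbb{Z}$: there the codegrees cluster, in the relevant angular coordinate, within $O(\log n/\sqrt n)$ of one bad value, the matching bound decays only to $\exp(-\Theta(1))$, and since the weight-$1$ part of $\T$ has standard deviation merely of order $\Theta(n)=\Theta(2\pi/\theta)$, no estimate using only that part can beat a constant. For these $t$ I would instead reveal all edges except those incident to a set $W$ of $w$ vertices. Writing $G$ for the revealed graph on $[n]\setminus W$ and $S_v:=\{u\notin W:\x_{vu}=1\}$, one has conditionally $\T=\T_{\mathrm{rev}}+\sum_{v\in W}Q_v+R$, where $Q_v=\sum_{\{u,w\}\in G}\x_{vu}\x_{vw}$ (the number of $G$-edges inside $S_v$) are independent over $v\in W$ and $R$ collects the $O(w^2 n)$ triangles with at least two vertices in $W$. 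Bounding $|e^{i\theta R}|=1$ crudely and factoring over $v$, it would suffice to show that, for every dense graph $G$ (which the revealed graph is except with probability $o(n^{-100})$) and every admissible $\theta$, one has $\big|\E[e^{i\theta Q_v}\mid G]\big|\le 1-\delta$ for a constant $\delta(p)>0$; then $|\varphi_n(t)|\lesssim(1-\delta)^{w}+o(n^{-100})=O(n^{-50})$ on taking $w$ a large enough multiple of $\log n$. Now $Q_v$ is a quadratic form in the $n-w$ independent Bernoulli$(p)$ variables $(\x_{vu})_{u\notin W}$ with coefficient matrix the adjacency matrix of the dense graph $G$, so it has standard deviation $\Theta(n^{3/2})$ --- dwarfing the resonant period $\Theta(n)$ --- and it takes the values $0$ and $1$, hence lies on no proper sublattice; the required bound is thus an anticoncentration estimate for $Q_v$ of Littlewood--Offord / Costello--Tao--Vu type. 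Proving this uniformly over the typical dense revealed graphs, together with the bookkeeping for the coupling term $R$ and for the atypical graphs, is the main obstacle; the rest is the routine estimation sketched above.
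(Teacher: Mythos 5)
The paper does not actually prove this lemma --- it cites it as Theorem~5 of Gilmer--Kopparty \cite{JustinTriangles}, with a remark that only constants were changed. So your task is really to reconstruct their argument from scratch, and judged as a proof your submission is incomplete: you say yourself that the resonant band $t\in[cn,\,C_1 n\sqrt{\log n}]$ is ``the main obstacle'' and you leave the key anticoncentration estimate for $Q_v$ as an unproved assertion of ``Littlewood--Offord / Costello--Tao--Vu type.'' For this lemma the pointwise bound $|\varphi_n(t)|=O(|t|^{-50})$ is genuinely needed at every $t$ in the range (the paper uses it inside an integral over the full interval $[n^{.5+\epsilon/10},\pi\sigma_n]$), so one cannot discard the resonant windows as having small measure; the bound must hold there, and you have not established it. Moreover, within that step there is a concrete logical error: you write ``Bounding $|e^{i\theta R}|=1$ crudely and factoring over $v$,'' but $R=\sum_{\{v_1,v_2\}\subset W}\x_{v_1v_2}\sum_u \x_{v_1u}\x_{v_2u}$ shares the cross-edge variables with the $Q_v$, so $\E\big[e^{i\theta(\sum_v Q_v+R)}\big]$ does not factor and you cannot pull out a modulus-one term from inside the expectation. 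The standard repair --- condition on the cross edges vertex by vertex so that each $v$'s contribution is $Q_v$ plus a linear correction, and prove the $1-\delta$ bound uniformly in that correction --- is plausible but is additional unproved work that you do not flag.

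For the bulk range your matching-reveal argument is reasonable and the bookkeeping (codegrees $\approx\mathrm{Bin}(n-2,p^2)$, a Lipschitz constant of order $\theta$ per revealed edge, McDiarmid) looks right in outline, though the inequality $\E[\sin^2(\theta c_e/2)]\gtrsim\min\{1,n\theta^2+\sin^2 A\}$ needs to be derived carefully from $\mathrm{Re}\,\varphi_{c_e}(\theta)=|\varphi_{c_e}(\theta)|\cos(\arg\varphi_{c_e}(\theta))$ rather than asserted, since the naive second-order Taylor expansion can have the wrong sign when $\cos 2A<0$. Finally, it is worth noting that your route differs from the source: the paper's commentary (``the tighter Cauchy--Schwarz bound given in \cite{JustinTriangles}'') indicates Gilmer--Kopparty handle this range by a Cauchy--Schwarz/decoupling estimate on the conditional characteristic function rather than by a matching-reveal-plus-McDiarmid bulk estimate combined with quadratic-form anticoncentration at the resonances. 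Your plan, if completed, would be a genuinely different (and more delicate) proof; as it stands it has an acknowledged hole exactly where the difficulty lies.
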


We will frequently deal with Bernoulli random variables, and so the following bound on their characteristic function will be useful.
\begin{lem}\label{Bernoulli}
Let $X$ be the mean 0 variance 1 random variable taking the values
$$X:=\begin{cases}
-\sqrt{\frac{p}{1-p}}&\mbox{with probability  }1-p\\
\sqrt{\frac{1-p}{p}} &\mbox{with probability }p
\end{cases}
$$
Then for $|t|<\sqrt{p(1-p)}\pi$ we have that $|\E[e^{itX}]|<1-\frac{2 t^2}{\pi^2}$.
\end{lem}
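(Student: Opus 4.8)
The plan is to compute the modulus of $\E[e^{itX}]$ exactly. Writing $a:=\sqrt{p/(1-p)}$ and $b:=\sqrt{(1-p)/p}$, we have $\E[e^{itX}]=(1-p)e^{-ita}+pe^{itb}$, a sum of two complex numbers of moduli $1-p$ and $p$ whose arguments differ by $t(a+b)$. Hence
$$|\E[e^{itX}]|^2=(1-p)^2+p^2+2p(1-p)\cos\bigl(t(a+b)\bigr).$$
The key algebraic simplification is that $a+b=\tfrac{p+(1-p)}{\sqrt{p(1-p)}}=\tfrac{1}{\sqrt{p(1-p)}}$, so, setting $s:=t/\sqrt{p(1-p)}$ and using $(1-p)^2+p^2=1-2p(1-p)$, this collapses to the clean identity
$$|\E[e^{itX}]|^2=1-2p(1-p)\bigl(1-\cos s\bigr).$$

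Next I would invoke the elementary inequality $1-\cos s\ge \tfrac{2s^2}{\pi^2}$, valid for $|s|\le\pi$; this follows from $1-\cos s=2\sin^2(s/2)$ together with Jordan's inequality $|\sin x|\ge \tfrac{2}{\pi}|x|$ on $|x|\le\pi/2$. The hypothesis $|t|<\sqrt{p(1-p)}\,\pi$ is exactly the statement $|s|<\pi$, so the inequality applies, and since $s^2=t^2/(p(1-p))$ it yields
$$|\E[e^{itX}]|^2\le 1-2p(1-p)\cdot\frac{2s^2}{\pi^2}=1-\frac{4t^2}{\pi^2}.$$

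Finally I would pass from the bound on the square to the claimed bound: since $p(1-p)\le\tfrac14$ forces $|t|<\pi/2$ and hence $\tfrac{4t^2}{\pi^2}<1$, the right-hand side is positive, so taking square roots and using $\sqrt{1-x}\le 1-\tfrac{x}{2}$ gives $|\E[e^{itX}]|\le\sqrt{1-4t^2/\pi^2}\le 1-\tfrac{2t^2}{\pi^2}$, with strict inequality whenever $t\neq 0$ (the only case of interest). There is no genuine obstacle here; the only point requiring a moment's thought is recognizing the exact simplification of $|\E[e^{itX}]|^2$ via $a+b=1/\sqrt{p(1-p)}$, after which the estimate reduces to the standard chord-versus-arc bound for the cosine.
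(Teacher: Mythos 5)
Your proof is correct and is essentially the same computation as the paper's: the paper re-parameterizes via $Y=\pm1$ and writes $|\E[e^{itX}]|^2=1-4p(1-p)\sin^2(\tilde t)$ with $\tilde t=t/(2\sqrt{p(1-p)})$, which is identical to your $1-2p(1-p)(1-\cos s)$ with $s=2\tilde t$, and both then apply Jordan's inequality and $\sqrt{1-x}\le 1-x/2$.
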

\begin{proof}
Let $Y$ be the random variable taking the value $-1$ with probability $p$ and $1$ with probability $1-p$.  $Y$ has variance $4p(1-p)$, and $X=\frac{Y-\E[Y]}{2\sqrt{p(1-p)}}$.  Define $\tilde t:=\frac{t}{2\sqrt{p(1-p)}}$.  So we can compute that 
\begin{align*}|\E[e^{itX}]|^2&=\left|\E\left[e^{i\tilde tY}\right]\right|^2=|pe^{-i\tilde t}+(1-p)e^{i\tilde t}|^2=\|(\cos(\tilde t),~(1-2p)\sin(\tilde t))\|^2\\
&=1-4p(1-p)\sin^2(\tilde t)\le 1-\frac{16p(1-p)}{\pi^2}{\tilde t}^2\le 1-\frac{4t^2}{\pi^2}
\end{align*}
where we used the fact that $|\sin(\tilde )| \ge \frac{2|\tilde |}{\pi}$ for $|\tilde|\le \frac{\pi}{2}$.  Lastly noticing that $\sqrt{1-x}\le 1-\frac{x}{2}$  completes the proof.
\end{proof}

\section{Main Results} \label{mainsection}

Here we give the high level proof of our main results, deferring the proofs of the important lemmas to the next section.  First, we need the following standard theorem from probability.

\subsection{Local Limit Theorem for $\T$.}

\begin{thm}[Fourier Inversion Formula for Lattices]
Let $X$ be a random variable supported in $b+h\mathbb{Z}$, and let $\varphi(t)$ be the characteristic function of $X$.  Then for $x\in b+h\mathbb{Z}$
$$\mathbb{P}(X=x)=\frac{h}{2\pi}\int_{-\frac{\pi}{h}}^{\frac{\pi}{h}} e^{-itx}\varphi_X(t)dt$$
\end{thm}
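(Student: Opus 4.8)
The plan is to expand the characteristic function as a series over the support of $X$ and then interchange that series with the integral, reducing everything to an elementary orthogonality computation for the characters $t \mapsto e^{ith}$. Concretely, since $X$ takes values in $b + h\mathbb{Z}$ we may write $\varphi_X(t) = \E[e^{itX}] = \sum_{y \in b+h\mathbb{Z}} \Pr(X=y)\, e^{ity}$, and substituting this into the claimed integral gives
$$\frac{h}{2\pi}\int_{-\pi/h}^{\pi/h} e^{-itx}\varphi_X(t)\,dt = \frac{h}{2\pi}\int_{-\pi/h}^{\pi/h} \sum_{y \in b+h\mathbb{Z}} \Pr(X=y)\, e^{it(y-x)}\,dt.$$

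Next I would justify moving the sum outside the integral. Because $\sum_{y} \Pr(X=y) = 1$ and $|e^{it(y-x)}| = 1$ for all $t$, the series converges uniformly on the compact interval $[-\pi/h, \pi/h]$ by the Weierstrass $M$-test (equivalently, one may apply Fubini's theorem to the product of counting measure and Lebesgue measure on the bounded interval, whose total mass is finite). Either way, term-by-term integration is legitimate, and we obtain
$$\frac{h}{2\pi}\sum_{y \in b+h\mathbb{Z}} \Pr(X=y) \int_{-\pi/h}^{\pi/h} e^{it(y-x)}\,dt.$$

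The heart of the argument is then the orthogonality relation: for $x, y \in b + h\mathbb{Z}$ we can write $y - x = hm$ with $m \in \mathbb{Z}$, and
$$\int_{-\pi/h}^{\pi/h} e^{ithm}\,dt = \begin{cases} \dfrac{2\pi}{h} & \text{if } m = 0,\\[2mm] \dfrac{e^{i\pi m} - e^{-i\pi m}}{ihm} = \dfrac{2\sin(\pi m)}{hm} = 0 & \text{if } m \neq 0. \end{cases}$$
Hence the only surviving term in the sum is the one with $y = x$, and the whole expression collapses to $\frac{h}{2\pi} \cdot \Pr(X=x) \cdot \frac{2\pi}{h} = \Pr(X=x)$, which is exactly the claimed identity.

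I do not expect a genuine obstacle here: the only step needing any care is the interchange of summation and integration, and that is dispatched cleanly by the uniform convergence of the defining series of $\varphi_X$ on a bounded interval. This is a standard fact recorded for completeness, and the proof above is the standard one.
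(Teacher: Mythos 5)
Your proof is correct and is the standard argument: expand $\varphi_X$ over the lattice support, interchange sum and integral (justified by uniform convergence or Fubini on the bounded interval), and invoke orthogonality of $t\mapsto e^{ithm}$ on $[-\pi/h,\pi/h]$. The paper states this theorem without proof, citing it as a standard fact from probability, so there is no in-paper argument to compare against; your write-up supplies exactly the canonical proof that the citation implicitly relies on.
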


As a consequence of this lemma we can turn characteristic function bounds for sequences of random variables into statements about their limiting distribution.
\begin{lem}\label{Pointwise Convergence}
Let $Y$ be the standard normal distribution which has density $\mathcal{N}(x)=\frac{1}{\sqrt{2\pi}}e^{-\frac{x^2}{2}}$ and characteristic function $\varphi(t)=e^{-\frac{t^2}{2}}$.   Let $X_n$ be a sequence of random variables
supported in the lattices $\mathcal{L}_n=b_n+h_n\mathbb{Z}$, then
$$|h_n\mathcal{N}(x)-\mathbb{P}(X_n=x)|\le h_n\left(\int_{-\frac{\pi}{h_n}}^{\frac{\pi}{h_n}}\left|\varphi(t)-\varphi_n(t)\right|dt+\frac{1}{\sqrt{2\pi}t}e^{-\frac{t^2}{2}}\right)$$
\end{lem}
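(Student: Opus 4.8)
The plan is to write both $h_n\mathcal{N}(x)$ and $\mathbb{P}(X_n=x)$ as Fourier integrals and subtract them. Since the standard normal is (up to normalization) its own Fourier transform, and $\varphi(t)=e^{-t^2/2}$ is integrable, the density admits the representation $\mathcal{N}(x)=\frac{1}{2\pi}\int_{-\infty}^{\infty}e^{-itx}\varphi(t)\,dt$; one can also just check this directly by completing the square. On the other hand, the Fourier Inversion Formula for Lattices, applied with $b=b_n$, $h=h_n$, gives for every $x\in\mathcal{L}_n=b_n+h_n\mathbb{Z}$ that $\mathbb{P}(X_n=x)=\frac{h_n}{2\pi}\int_{-\pi/h_n}^{\pi/h_n}e^{-itx}\varphi_n(t)\,dt$. (The estimate is only of interest for $x$ in this lattice; elsewhere the left side is $0$.)

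Next I would subtract these two expressions, splitting the full line $\mathbb{R}$ into the interval $[-\pi/h_n,\pi/h_n]$ and its complement:
\begin{align*}
h_n\mathcal{N}(x)-\mathbb{P}(X_n=x)=\frac{h_n}{2\pi}\int_{-\pi/h_n}^{\pi/h_n}e^{-itx}\bigl(\varphi(t)-\varphi_n(t)\bigr)\,dt+\frac{h_n}{2\pi}\int_{|t|>\pi/h_n}e^{-itx}\varphi(t)\,dt.
\end{align*}
Taking absolute values, using the triangle inequality for integrals together with $|e^{-itx}|=1$ and $|\varphi(t)|=e^{-t^2/2}$, bounds the first term by $\frac{h_n}{2\pi}\int_{-\pi/h_n}^{\pi/h_n}|\varphi(t)-\varphi_n(t)|\,dt$ and the second by $\frac{h_n}{2\pi}\int_{|t|>\pi/h_n}e^{-t^2/2}\,dt$.

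Finally I would control the Gaussian tail with the elementary bound $\int_T^{\infty}e^{-t^2/2}\,dt\le\int_T^{\infty}\tfrac{t}{T}e^{-t^2/2}\,dt=\tfrac{1}{T}e^{-T^2/2}$, valid for all $T>0$, so that with $T=\pi/h_n$ the tail contribution is at most $\frac{h_n^2}{\pi^2}e^{-\pi^2/(2h_n^2)}$, which is dominated by the term $h_n\cdot\frac{1}{\sqrt{2\pi}\,t}e^{-t^2/2}$ at $t=\pi/h_n$ since $2\le\pi$; the harmless factor $\frac{1}{2\pi}\le 1$ in front of the first term may likewise be dropped to match the stated form. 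There is essentially no obstacle here: the argument is just two applications of Fourier inversion plus a standard Gaussian tail estimate. The only point deserving care is that the lattice inversion formula yields an integral over the finite window $[-\pi/h_n,\pi/h_n]$ rather than all of $\mathbb{R}$, so comparing it with the full-line representation of $\mathcal{N}$ necessarily produces the truncated tail of the Gaussian's characteristic function, which is precisely the extra term recorded on the right-hand side.
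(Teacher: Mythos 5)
Your proof is correct and follows exactly the same route as the paper: both rewrite $\mathcal{N}(x)$ via full-line Fourier inversion and $\mathbb{P}(X_n=x)$ via the lattice inversion formula, subtract, split at $|t|=\pi/h_n$, and control the Gaussian tail. The only difference is that you spell out the elementary tail estimate $\int_T^\infty e^{-t^2/2}\,dt\le T^{-1}e^{-T^2/2}$ and the constant comparison, which the paper leaves implicit.
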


\begin{proof}
By the general (that is, not the lattice version above) inversion principle for characteristic functions, we have $\N(x)=\frac{1}{2\pi}\int_{-\infty}^\infty e^{-itx}\varphi(t)dt$.
  By the above theorem we have
that $\mathbb{P}(X_n=x)=\frac{h_n}{2\pi}\int_{-\frac{\pi}{h_n}}^{\frac{\pi}{h_n}} e^{-itx}\varphi_n(t)dt$.  So we have that
\begin{align*}
\left|h_n\N(x)-\mathbb{P}(X_n=x)\right|&=\left|\frac{h_n}{2\pi}\int_{-\infty}^\infty e^{-itx}\varphi(t)dt-\frac{h_n}{2\pi}\int_{-\frac{\pi}{h_n}}^{\frac{\pi}{h_n}} e^{-itx}\varphi_n(t)dt\right|\\
&\le \left|\frac{h_n}{2\pi}\int_{-\frac{\pi}{h_n}}^{\frac{\pi}{h_n}} e^{-itx}\left(\varphi(t)-\varphi_n(t)\right)dt\right|+\left|\frac{h_n}{2\pi}\int_{|t|>\frac{\pi}{h_n}} e^{-itx}\varphi(t)dt\right|\\
&\le h_n\left(\int_{-\frac{\pi}{h_n}}^{\frac{\pi}{h_n}}\left|\varphi(t)-\varphi_n(t)\right|dt+\frac{1}{\sqrt{2\pi}t}e^{-\frac{t^2}{2}}\right)
\end{align*}
\end{proof}

The main calculation of this chapter is the following theorem, whose proof is given in Section 5.
\begin{thm}\label{MainTriangle}
Fix $\epsilon>0$.  Let $Z:=\frac{\T-p^3{n\choose 3}}{\sigma}$, and $\varphi_Z(t)$ be the characteristic function of $Z$.  Then 
$$\int_{-\pi \sigma_n}^{\pi \sigma_n}\left|\varphi_Z(t)-e^\frac{-t^2}{2}\right|=O_\epsilon(n^{-.5+\epsilon})$$
\end{thm}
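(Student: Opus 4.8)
The plan is to break $[-\pi\sigma_n,\pi\sigma_n]$ into three pieces by the size of $|t|$, with a small $\epsilon>0$ fixed throughout. The piece $|t|\ge n^{1/2+\epsilon}$ is free: Lemma~\ref{Justin} gives $|\varphi_Z(t)|=O(|t|^{-50})$, and $e^{-t^2/2}$ is superpolynomially small there, so this piece contributes $O(n^{-24})$ to the integral. The substance is the \emph{central} piece $|t|\le n^{\epsilon}$, where $e^{-t^2/2}$ is not small and must be recovered as a genuine main term, and the \emph{intermediate} piece $n^{\epsilon}\le|t|\le n^{1/2+\epsilon}$, where it suffices to make $|\varphi_Z(t)|$ itself small.

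The engine in both remaining pieces is the $p$-biased Fourier decomposition of $Z$ together with the fact---one of the main lemmas of Sections~\ref{analysissection}--\ref{lemmasection}---that $\T$ carries all but a $1/n$ fraction of its variance on weight-$1$ coefficients. Since $\T$ is vertex-transitive its weight-$1$ coefficients all agree, so the weight-$1$ part of $Z$ is $X:=\frac{\widehat{\T}(\{e_0\})}{\sigma_n}\sum_e\chi_e=\frac{p^2(n-2)}{\sigma_n}\bigl(m-p{n\choose 2}\bigr)$, a centered rescaling of the edge count $m\sim\mathrm{Bin}({n\choose 2},p)$, i.e.\ a sum of ${n\choose 2}$ i.i.d.\ copies of the mean-$0$ variance-$1$ Bernoulli variable of Lemma~\ref{Bernoulli}. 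Put $Y:=Z-X$; concentration on weight $1$ is exactly $\mathrm{Var}(Y)=1-{n\choose 2}\widehat{\T}(\{e_0\})^2/\sigma_n^2=O(1/n)$, hence $\E|Y|\le\mathrm{Var}(Y)^{1/2}=O(n^{-1/2})$ and $\mathrm{Var}(X)=1-O(1/n)$.

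In the central piece I would use the crude bound $|\varphi_Z(t)-\E[e^{itX}]|\le\E|e^{itY}-1|\le|t|\,\E|Y|=O(|t|n^{-1/2})$ and then evaluate $\E[e^{itX}]$ exactly via independence: with $u:=tp^2(n-2)/\sigma_n=\Theta(t/n)$ one gets $\log\E[e^{itX}]={n\choose 2}\log\E[e^{iu\chi}]={n\choose 2}\bigl(-\tfrac{u^2}{2}+O(|u|^3)\bigr)=-\tfrac{t^2}{2}\mathrm{Var}(X)+O(|t|^3/n)=-\tfrac{t^2}{2}+O\bigl(t^2/n+|t|^3/n\bigr)$, so $\E[e^{itX}]=e^{-t^2/2}\bigl(1+O(|t|^3/n)\bigr)$. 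Thus $|\varphi_Z(t)-e^{-t^2/2}|=O(|t|n^{-1/2})+e^{-t^2/2}O(|t|^3/n)$, which integrates to $O(n^{-1/2+2\epsilon})$ over $|t|\le n^{\epsilon}$.

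The intermediate piece is the crux. Here I would adapt the argument by first revealing all edges \emph{outside} a fixed $k$-regular triangle-free graph $H$ on $[n]$, where $k=k(t):=\max\bigl(1,\lceil Cn\log n/t^{2}\rceil\bigr)$ for a large constant $C=C(\epsilon)$; since $|t|\ge n^{\epsilon}$ we have $1\le k=o(n)$. Conditioning on the revealed edges $\beta$, $\T_\beta$ is a function of the $nk/2$ edges of $H$, quadratic in them (no triangle lies inside $H$), and $Z|_\beta=\mathrm{const}(\beta)+X'+Y'$ with $X'$ its weight-$1$ and $Y'$ its weight-$2$ part. The two facts I need about this restriction are the genuinely new content: \textbf{(i)} outside a set of $\beta$ of probability $n^{-\omega(1)}$ (controlled by Theorem~\ref{Hypercontractivity} and a union bound, since the number of common neighbours of any two vertices is $(1+o(1))p^{2}(n-2)$) \emph{every} weight-$1$ coefficient of $\T_\beta$ equals $\sqrt{p(1-p)}\,p^{2}(n-2)(1+o(1))$, so $X'$ is essentially a sum of $nk/2$ i.i.d.\ Bernoullis with $\mathrm{Var}(X'\mid\beta)=\Theta(k/n)$; and \textbf{(ii)} the leftover variance drops by the \emph{square} of the sparsification factor, $\mathrm{Var}(Y'\mid\beta)=O(k^{2}/n^{3})$, because the weight-$2$ part of $\T_\beta$ is a sum over the $\Theta(nk^{2})$ $2$-paths $uvw$ of $H$ of $p(1-p)\chi_{uv}\chi_{vw}$, and all cross terms between distinct $2$-paths vanish in the variance. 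Granting these, $|\E[e^{itZ}\mid\beta]|\le|\E[e^{itX'}\mid\beta]|+|t|\,\E[|Y'|\mid\beta]$; the first term factors over the $nk/2$ coordinates and Lemma~\ref{Bernoulli} (applicable since $|t|\cdot\Theta(1/n)\ll\sqrt{p(1-p)}\pi$) bounds it by $\exp\bigl(-\tfrac{2t^{2}}{\pi^{2}}\mathrm{Var}(X'\mid\beta)\bigr)=\exp(-\Theta(kt^{2}/n))\le n^{-\Theta(C)}$, while the second is $O(|t|(k^{2}/n^{3})^{1/2})=O(|t|k/n^{3/2})$. Averaging over $\beta$ and bounding the bad event trivially gives $|\varphi_Z(t)|=n^{-\Theta(C)}+O(|t|k(t)/n^{3/2})$, which by the choice of $k(t)$ is $n^{-\Theta(C)}+O(\log n/(|t|n^{1/2}))$ for $|t|\le\sqrt{Cn\log n}$ and $n^{-\Theta(C)}+O(|t|n^{-3/2})$ for $|t|\ge\sqrt{Cn\log n}$; integrating over the two ranges (and adding the negligible $\int e^{-t^{2}/2}$) yields $O\bigl((\log n)^{2}n^{-1/2}+n^{-1/2+2\epsilon}\bigr)$. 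Summing the three pieces and renaming $\epsilon$ finishes the proof. The main obstacle is (i) and (ii): they require understanding the $p$-biased spectrum of $\T$, and of its restrictions $\T_\beta$, well enough to see that the weight-$1$ mass is both essentially uniform across edges and, after sparsifying to $H$, still captures all but a $(k/n)^{2}$ fraction of the conditional variance.
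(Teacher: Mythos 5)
Your proposal is correct and follows essentially the same route as the paper: the same three-way split of $[-\pi\sigma_n,\pi\sigma_n]$, with Gilmer--Kopparty's bound for large $t$, the weight-one decomposition $Z=X+Y$ (with $\E|Y|=O(n^{-1/2})$ and the mean-value bound $|t|\,\E|Y|$) for small $t$, and, for the intermediate range, revealing $H^c$ for a sparse triangle-free $k$-regular $H$ so that the conditional weight-one variance shrinks only by $k/n$ while the conditional higher-weight variance shrinks by $(k/n)^2$, controlling the coefficient fluctuations via hypercontractivity exactly as in Claims~\ref{GoodEvent} and~\ref{GoodBound}. The only cosmetic deviations are using a direct Taylor expansion of $\log\E[e^{itX}]$ in place of the Berry--Esseen inequality for small $t$, and choosing $k\sim n\log n/t^2$ rather than $k\sim n/t^{2-\epsilon}$, neither of which changes the argument.
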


We can now prove our main claim, Theorem \ref{Sup Main}, as it is elementarily equivalent to the following corollary.  
\begin{cor}\label{Sup Bound}
Let $\mathcal{L}_n:=\frac{1}{\sigma_n}(\mathbb{Z}-p^3{n\choose 3})$.  Then for any $x\in \mathcal{L}_n$ we have that
$$\left|\mathbb{P}(Z_n=x)-\frac{\N(x)}{\sigma_n}\right|=O_\epsilon\left(\frac{1}{n^{2.5-\epsilon}}\right)$$
\end{cor}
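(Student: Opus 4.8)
The plan is to combine Lemma~\ref{Pointwise Convergence} with the characteristic function estimate of Theorem~\ref{MainTriangle}, so that essentially no new analytic work is required beyond bookkeeping. I would apply Lemma~\ref{Pointwise Convergence} to the sequence $X_n := Z_n$, which is supported on the lattice $\mathcal{L}_n = b_n + h_n\mathbb{Z}$ with $h_n = 1/\sigma_n$ and $b_n = -p^3\binom{n}{3}/\sigma_n$. Since $h_n\N(x) = \N(x)/\sigma_n$ and $\pi/h_n = \pi\sigma_n$, the lemma gives
\[
\left|\frac{\N(x)}{\sigma_n} - \mathbb{P}(Z_n = x)\right| \le \frac{1}{\sigma_n}\left(\int_{-\pi\sigma_n}^{\pi\sigma_n}\left|\varphi(t) - \varphi_Z(t)\right|\,dt + \frac{1}{\sqrt{2\pi}\,\pi\sigma_n}e^{-\pi^2\sigma_n^2/2}\right),
\]
and Theorem~\ref{MainTriangle} bounds the integral appearing here by $O_\epsilon(n^{-0.5+\epsilon})$.

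Next I would record the order of magnitude of $\sigma_n$. For fixed $p\in(0,1)$ the variance of the triangle count satisfies $\sigma_n^2 = \Theta(n^4)$ — the classical second-moment computation, dominated by pairs of triangles sharing an edge, and also visible directly from the Fourier decomposition of $\T$ in Section~\ref{analysissection} — so $1/\sigma_n = \Theta(n^{-2})$. Multiplying the displayed inequality by $1/\sigma_n$ therefore converts the $O_\epsilon(n^{-0.5+\epsilon})$ bound from Theorem~\ref{MainTriangle} into $O_\epsilon(n^{-2.5+\epsilon})$, while the truncation tail $\frac{1}{\sigma_n}\cdot\frac{1}{\sqrt{2\pi}\,\pi\sigma_n}e^{-\pi^2\sigma_n^2/2}$ is of order $n^{-4}e^{-\Theta(n^4)}$ and is absorbed into the error term. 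This yields exactly $\left|\mathbb{P}(Z_n = x) - \N(x)/\sigma_n\right| = O_\epsilon(n^{-2.5+\epsilon})$, which is the claim of Corollary~\ref{Sup Bound}.

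Finally, I would spell out the "elementary equivalence" with Theorem~\ref{Sup Main}: for $k\in\mathbb{Z}$ set $x = (k-\mu)/\sigma_n \in \mathcal{L}_n$, so that $\mathbb{P}(\T = k) = \mathbb{P}(Z_n = x)$ and $\N(x)/\sigma_n = \frac{1}{\sqrt{2\pi}\,\sigma_n}\exp\!\big(-(k-\mu)^2/(2\sigma_n^2)\big)$; substituting these two identities into the corollary recovers the pointwise statement of Theorem~\ref{Sup Main}.

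There is no genuine obstacle internal to this corollary: all of the difficulty is packaged into Theorem~\ref{MainTriangle}, whose proof is deferred to Section~\ref{lemmasection}. The only points that need care are the elementary ones flagged above — that $\sigma_n = \Theta(n^2)$, since it is precisely the factor $1/\sigma_n$ that upgrades an $\ell^1$ characteristic-function bound into a pointwise $n^{-2.5+\epsilon}$ estimate, and that the tail from Lemma~\ref{Pointwise Convergence} is negligible at scale $t = \pi\sigma_n$.
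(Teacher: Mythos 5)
Your proposal is correct and follows exactly the same route as the paper: invoke Lemma~\ref{Pointwise Convergence} with $h_n = 1/\sigma_n$, bound the integral by Theorem~\ref{MainTriangle}, and use $\sigma_n = \Theta(n^2)$ to absorb the prefactor. The paper states this in one line; you have simply made the bookkeeping explicit.
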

\begin{proof}
Apply Lemma \ref{Pointwise Convergence} to $Z$, combined with the estimate for the characteristic function of $Z$ given by Theorem \ref{MainTriangle}.
\end{proof}

\subsection{Bounds on the Statistical Distance of $\T$ from Normal}
We give a lemma which will allow us to turn the $L^\infty$ bounds we obtain into bounds on the statistical difference of $\T$ from the normal.
\begin{lem}\label{L1 Lemma}

Let $\N$ be the density of the standard normal and $\varphi(t)$ its characteristic function.  Let $X_n$ be a sequence of random variables supported in the lattice 
$\mathcal{L}_n:=b_n+h_n\mathbb{Z}$,
and with chf's $\varphi_n$.  Assume that the following hold:
\begin{enumerate}
\item $\sup_{x\in \mathcal{L}_n} |\Pr(X_n=x)-h_n\N(x)|<\delta_n h_n$
\item $\Pr(|X_n|>A)\le \epsilon_n$
\end{enumerate}
Then $\sum_{x\in \mathcal{L}_n} |\Pr(X_n=x)-\N(x)|\le 2A \delta_n+\epsilon_n+\frac{h_n}{\sqrt{2\pi}A}e^\frac{-A^2}{2}$.
\end{lem}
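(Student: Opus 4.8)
First I would partition the lattice $\mathcal{L}_n$ into the window $W := \{x \in \mathcal{L}_n : |x| \le A\}$ and the tail $T := \{x \in \mathcal{L}_n : |x| > A\}$, and estimate the contribution of each set to $\sum_{x \in \mathcal{L}_n}\left|\Pr(X_n = x) - h_n\N(x)\right|$ separately (here the comparison density in the sum is the lattice-scaled normal $h_n\N(x)$, so the total masses are comparable): hypothesis (1) controls the window, while hypothesis (2) together with a Gaussian tail estimate controls the tail.

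On the window I would argue as follows. An interval of length $2A$ meets a lattice of spacing $h_n$ in at most $2A/h_n + 1$ points, regardless of the offset $b_n$, so $W$ has at most that many points. By hypothesis (1) each $x \in W$ contributes at most $\delta_n h_n$, hence $\sum_{x \in W}\left|\Pr(X_n = x) - h_n\N(x)\right| \le (2A/h_n + 1)\,\delta_n h_n = 2A\delta_n + h_n\delta_n$; the extra $h_n\delta_n$ is lower order (it vanishes as $h_n \to 0$) and can be folded into $2A\delta_n$, or removed with a slightly sharper count of $|W|$.

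On the tail I would first apply the triangle inequality,
$$\sum_{x \in T}\left|\Pr(X_n = x) - h_n\N(x)\right| \;\le\; \sum_{x \in T}\Pr(X_n = x) \;+\; \sum_{x \in T}h_n\N(x),$$
and note that $\sum_{x \in T}\Pr(X_n = x) = \Pr(|X_n| > A) \le \epsilon_n$ by hypothesis (2). For the remaining Riemann-type sum $\sum_{x \in T}h_n\N(x)$, since $\N$ is decreasing in $|x|$, for every $x \in T$ (using $A \ge h_n$, which holds in all applications) one has $h_n\N(x) \le \int_{|x|-h_n}^{|x|}\N(t)\,dt$; summing over the resulting disjoint intervals bounds the tail sum by $2\int_{A-h_n}^{\infty}\N(t)\,dt$, and the classical estimate $\int_u^{\infty}e^{-t^2/2}\,dt \le \frac{1}{u}\,e^{-u^2/2}$ (insert $t/u \ge 1$ and integrate) gives a Gaussian tail of order $A^{-1}e^{-A^2/2}$, which is the last term on the right-hand side. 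Combining the window and tail estimates yields the claim.

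The only step that takes any care --- and the one I expect to be the main obstacle --- is the discrete-to-continuous passage $\sum_{x \in T}h_n\N(x) \le 2\int_{A-h_n}^{\infty}\N(t)\,dt$: because $\mathcal{L}_n$ has an arbitrary offset $b_n$ and $\N$ fails to be monotone on an interval straddling $0$ or $\pm A$, one must be slightly careful near the endpoints $\pm A$, which is precisely what produces the mild boundary correction (the $A - h_n$ in place of $A$). Everything else is a direct application of the two hypotheses and a standard Gaussian tail bound.
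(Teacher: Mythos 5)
Your proof takes exactly the same route as the paper's: split $\mathcal{L}_n$ at $|x|=A$, count lattice points in the window and invoke hypothesis (1), and in the tail apply the triangle inequality to separate $\Pr(|X_n|>A)$ from the Riemann sum of $h_n\N$, which you then bound by a Gaussian tail integral. (Implicitly you read the lemma's conclusion as $\sum_x\bigl|\Pr(X_n=x)-h_n\N(x)\bigr|$, which is clearly what is meant and what the paper's own proof bounds; the stated version without the $h_n$ compares quantities of incommensurate scale.)

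One thing worth knowing: your Riemann-sum step is handled more carefully than the paper's. You correctly observe that for a lattice of spacing $h_n$ and a monotone density, one has $\sum_{|x|\ge A}h_n\N(x)\le 2\int_{A-h_n}^{\infty}\N(t)\,dt$, which gives a tail term of order $A^{-1}e^{-A^2/2}$ --- with \emph{no} factor of $h_n$. The paper instead writes the intermediate bound $h_n\int_{|x|>A-1}\N(x)\,dx$ and arrives at $\frac{h_n}{\sqrt{2\pi}A}e^{-A^2/2}$, but that extra $h_n$ does not survive an honest accounting: each unit of length on the real line is covered by roughly $1/h_n$ lattice points, and that factor cancels the $h_n$ in the summand. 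So your proof actually yields a slightly weaker conclusion than literally stated in the lemma (last term of order $A^{-1}e^{-A^2/2}$ rather than $h_nA^{-1}e^{-A^2/2}$), but I believe yours is the one that is correct; the lemma as written overstates the bound. Since in the application $A=\log^2 n$ makes $e^{-A^2/2}$ super-polynomially small, the discrepancy is harmless downstream. Your extra $+h_n\delta_n$ from counting the window is also fine and negligible, as you noted.
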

\begin{proof}
We directly compute that: 
\begin{align*}
\sum_{x\in \mathcal{L}_n} |\Pr(X_n=x)-h_n\N(x)|&\le \sum_{\substack{x\in \mathcal{L}_n\\|x|<A}} |\Pr(X_n=x)-h_n\N(x)| + \sum_{\substack{x\in \mathcal{L}_n\\|x|\ge A}} |\Pr(X_n=x)-h_n\N(x)|\\
&\le \sum_{\substack{x\in \mathcal{L}_n\\|x|<A}} |\Pr(X_n=x)-h_n\N(x)|+\Pr(X_n\ge A)+h_n\int_{|x|>A-1} \N(x)dx\\
&\le \frac{2A}{h_n} \delta_n h_n+\epsilon_n+\frac{h_n}{\sqrt{2\pi}A}e^\frac{-A^2}{2}
\end{align*}
\end{proof}

We can now use this to give a proof that the statistical distance between triangle counts and discrete normal variable is asymptotically small.  We will pick $A:=\log^2(n)$.  By an application
of hypercontractivity (Theorem \ref{Hypercontractivity}) we find that 
$$\Pr(|Z_n|>\log^2(n))\le e^{-\Omega_p(\log^2(n))}=n^{-\Omega_p(\log(n))}=o\left(n^{-.5}\right)$$
This bounds the $\epsilon_n$ term in the above theorem.
We also have from Corollary \ref{Sup Bound} that $\sup_{x\in \mathcal{L}_n} |\Pr(X_n=x)-h_n\N(x)|=O_\epsilon(n^{-2.5+\epsilon})$.  Combining this with
the calculation that $\sigma_n=\Theta(n^2)$ we obtain the following corollary, which is equivalent to Theorem \ref{L1 Main}:

\begin{cor}\label{L1 Bound}
Fix $\epsilon>0$.  Let $\mathcal{L}_n:=\frac{1}{\sigma_n}(\mathbb{Z}-p^3{n\choose 3})$.  Then
$$\sum_{x\in \mathcal{L}_n}\left|\Pr(Z=x)-\frac{1}{\sigma}\N(x)\right|=O_\epsilon(n^{-.5+\epsilon})$$
\end{cor}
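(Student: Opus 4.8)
The plan is to read this off from Lemma \ref{L1 Lemma} applied to $Z_n$, which is supported on the lattice $\mathcal{L}_n$ with mesh $h_n = 1/\sigma_n$. Indeed, all the real work is already upstream: Lemma \ref{L1 Lemma} reduces the desired $\ell^1$ estimate to a pointwise ($\ell^\infty$) bound on $|\Pr(Z_n=x)-h_n\N(x)|$ together with a crude tail bound $\Pr(|Z_n|>A)\le\epsilon_n$, and the pointwise bound is precisely Corollary \ref{Sup Bound}. The only free parameter is the truncation level $A$, and I would take $A := \log^2 n$: this is the balance point at which both the Gaussian tail past $A$ and the tail of $Z_n$ past $A$ are super-polynomially small (hence negligible against $n^{-1/2}$), while the $\Theta(A/h_n)$ lattice points inside $[-A,A]$, each carrying an error of at most $\delta_n h_n$, together contribute only $O(A\delta_n)$, i.e. only a $\log^2 n$ factor more than $\delta_n$ itself.

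For hypothesis (1) I would quote Corollary \ref{Sup Bound}, which gives $\sup_{x\in\mathcal{L}_n}|\Pr(Z_n=x)-h_n\N(x)| = O_\epsilon(n^{-2.5+\epsilon})$; combined with the standard second-moment computation $\sigma_n = \Theta(n^2)$ (so $h_n = 1/\sigma_n = \Theta(n^{-2})$), this is of the form $\delta_n h_n$ with $\delta_n = O_\epsilon(n^{-0.5+\epsilon})$. For hypothesis (2) I would use hypercontractivity: the structural point is that $\T = \sum_{\{a,b,c\}}\x_{ab}\x_{ac}\x_{bc}$ is a polynomial of degree $3$ in the edge indicators, hence so is $Z_n = (\T-\mu)/\sigma_n$, with $\|Z_n\|_2 = 1$, so Theorem \ref{Hypercontractivity} with $k=3$ and $t = \log^2 n$ gives $\Pr(|Z_n|>\log^2 n) \le \exp(-\Omega_p(\log^{4/3} n))$, which is smaller than any polynomial in $n$ and in particular $o(n^{-1/2})$; we may take this bound as $\epsilon_n$. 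The key is that the degree is a fixed constant, so the exponent does not deteriorate with $n$.

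Plugging $A = \log^2 n$, this $\delta_n$, and this $\epsilon_n$ into the conclusion of Lemma \ref{L1 Lemma} yields
$$\sum_{x\in\mathcal{L}_n}\left|\Pr(Z_n=x)-h_n\N(x)\right| \;\le\; 2A\delta_n + \epsilon_n + \frac{h_n}{\sqrt{2\pi}\,A}\,e^{-A^2/2},$$
whose three terms are, respectively, $O_\epsilon(n^{-0.5+\epsilon}\log^2 n)$ (which is $O_\epsilon(n^{-0.5+\epsilon})$ after slightly enlarging $\epsilon$), $o(n^{-1/2})$, and super-polynomially small; since $h_n\N(x) = \frac{1}{\sigma_n}\N(x)$ this is the asserted bound, and Theorem \ref{L1 Main} follows. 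I expect no genuine obstacle in this deduction — the hard analytic content of the paper lives in Theorem \ref{MainTriangle}, which powers Corollary \ref{Sup Bound}, and here the only points worth a sentence of care are the balancing choice $A = \log^2 n$ and the remark that $\T$ has bounded degree so hypercontractivity gives a tail bound with an $n$-independent exponent.
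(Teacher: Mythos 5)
Your proposal is correct and follows essentially the same route as the paper: Lemma \ref{L1 Lemma} with $A=\log^2 n$, hypothesis (1) from Corollary \ref{Sup Bound} together with $\sigma_n=\Theta(n^2)$, and hypothesis (2) from degree-$3$ hypercontractivity. In fact your exponent $\exp(-\Omega_p(\log^{4/3}n))$ is the one Theorem \ref{Hypercontractivity} actually yields for a degree-$3$ polynomial with $t=\log^2 n$; the paper writes $e^{-\Omega_p(\log^2 n)}$, which is a small slip, but both bounds are super-polynomially small so the conclusion is unaffected.
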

\begin{proof}
 In the above Lemma for $X_n=\T_n$ we have that $h_n=\sigma_n$.  We may take $\delta_n:=n^{-.5+\frac{\epsilon}{2}}$ by Corollary \ref{Sup Bound}, and we may fix $A=\log^2(n)$ as above.  Then as argued above $\epsilon_n=O(n^{-.5})$ while $e^{-\frac{A^2}{2}}$ is miniscule.  Plugging these choices into the bound given by Lemma \ref{L1 Lemma} gives the desired estimate.
\end{proof}

\section{Properties of the Triangle Counting Function}\label{analysissection}
In this section we express the triangle counting function in the $p$-biased Fourier basis, and compute some basic properties.

Given a particular triangle $\triangle$ with vertex set $v_1,v_2,v_3$, we will use the notation $e\in \triangle$ to denote that $e$ is an edge in the given triangle $\triangle$.  The indicator function of this triangle's presence  given the graph with edge indicator vector $\x\in \{0,1\}^{n\choose 2}$ is given by
\begin{align*}
1_\triangle(\x)&=\prod_{e\in \triangle}\x_e= \prod_{e\in \triangle}\left(\sqrt{p(1-p)}\chi_e(\x)+p\right)\\\
&=p^3+p^2\sqrt{p(1-p)}\sum_{e\in \triangle}\chi_e+p^2(1-p)\sum_{e_1\neq e_2\in \triangle}\chi_{\{e_1,e_2\}}+(p(1-p))^\frac 3 2 \chi_{\{e_1,e_2,e_3\}}
\end{align*}

Given two edges every edge appears in $n-2$ triangles, and each pair of edges appear in exactly 1 triangle iff they are incident to a common vertex (an event which we denote by $e_1\sim e_2$) we find by summing over all possible triangles that

$$\T=p^3{n \choose 3}+(n-2)\sum_{e\in {[n]\choose 2}} p^2\sqrt{(p)(1-p)}\chi_e+\sum_{e_1\sim e_2}p^2(1-p)\chi_{\{e_1,e_2\}}+\sum_{\triangle}p^\frac32(1-p)^\frac32 \chi_\triangle$$

Restated we have found the Fourier Transform of $\T$ and it has the form
\begin{equation}\label{FourierTransform}
\hat \T(S)=\begin{cases}
p^3{n\choose 3}&\mbox{if } S=\varnothing\\
 (n-2)p^2\sqrt{(p)(1-p)}&\mbox{if }|S|=1\\
p^2(1-p)&\mbox{if } S=\{e_1,e_2\},~e_1\sim e_2\\
p^\frac32(1-p)^\frac32 &\mbox{if }S=\triangle\\
0&\mbox{else}
\end{cases}
\end{equation}

We compute the variance of $\T$ using the orthonormality of our basis (or Parseval) to be
\begin{equation} \label{Std}
\begin{aligned}
\sigma^2:&=\E[\T^2]-\E[\T]^2=\sum_{\substack{S\subset {[n]\choose 2}\\S\neq \varnothing}} \hat{T}^2(S)\\
&=\sum_{e\in {[n]\choose 2}} \left((n-2)p^2\sqrt{(p)(1-p)}\right)^2+\sum_{e_1\sim e_2}\left(p^2(1-p)\right)^2+\sum_{\triangle}\left(p^\frac32(1-p)^\frac32\right)^2\\
&={n\choose 2}(n-2)^2p^5(1-p)+3{n\choose 3} p^4(1-p)^2+{n\choose 3}p^3(1-p)^3\\
&=\Theta(n^4)
\end{aligned}
\end{equation}

It should be noted that asymptotically we have $\sigma\sim \frac{p^{5/2}(1-p)^{1/2}}{2}n^2$.  Also it is significant that the main term in the above expansion of $\sigma^2$ comes entirely from terms of the form $\chi_e$, for a singleton set containing one
edge $e$.  This shows that $\T$ has Fourier spectrum highly concentrated on degree 1.  In particular, if we define $W^1:=\sum_e \hat \T^2(e)$ then $\sigma^2=W^1(1+O(\frac1n))$.

Recall that we defined $Z:=\frac{\T-\mu}{\sigma}=\frac{\T-p^3{n\choose 3}}{\sigma}$.   By construction $Z$ has mean 0 and variance 1.
The fourier decomposition of $Z$ is just a normalized version of $\T$.
In particular $\hat Z(S)=\frac{\hat \T(S)}{\sigma}$ if $S\neq \varnothing$, and $\hat Z(\varnothing)=0$.

\section{Estimating the Characteristic Function of $Z$}\label{lemmasection}
\subsection{Main Results of the Section}
In this section  we prove the following bound
\begin{thm}\label{MainTriangle}
Let $Z:=\frac{\T-p^3{n\choose 3}}{\sigma}$, and $\varphi_Z(t)$ be the characteristic function of $Z$.  Then for any $\epsilon>0$
$$\int_{-\pi \sigma_n}^{\pi \sigma_n}\left|\varphi_Z(t)-e^\frac{-t^2}{2}\right|=O_\epsilon(n^{.5-\epsilon})$$
\end{thm}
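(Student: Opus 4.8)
The plan is to break the integral over $[-\pi\sigma,\pi\sigma]$ into a bulk range, an intermediate range, and a tail range, and to bound $|\varphi_Z(t)-e^{-t^2/2}|$ on each. Throughout I would work with the decomposition $Z=X+Y$, where $X:=\sum_{e}\hat Z(e)\chi_e$ is the weight-$1$ part of $Z$ and $Y:=Z-X$ carries the weight-$2$ and weight-$3$ parts. The facts I would extract from Section~\ref{analysissection} are: by \eqref{FourierTransform} every weight-$1$ coefficient equals $c_n:=(n-2)p^2\sqrt{p(1-p)}/\sigma=\Theta(1/n)$, so $X=c_n\sum_e\chi_e$ is a recentred, rescaled $\mathrm{Bin}\big(\binom n2,p\big)$ and $\varphi_X(t)=\varphi_\chi(c_nt)^{\binom n2}$ with $\varphi_\chi$ the characteristic function of the single variable in Lemma~\ref{Bernoulli}; and by \eqref{Std}, $\mathrm{Var}(X)=W^1/\sigma^2=1-\Theta(1/n)$, hence $\|Y\|_2^2=\mathrm{Var}(Y)=1-\mathrm{Var}(X)=\Theta(1/n)$.

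On the bulk range $|t|\le A:=C\sqrt{\log n}$ I would expand $\log\varphi_\chi(u)=-u^2/2+O(u^3)$ to get $\varphi_X(t)=\exp\big(-\tfrac12\mathrm{Var}(X)t^2+O(|t|^3/n)\big)$, so that (using $1-\mathrm{Var}(X)=\Theta(1/n)$ and that the exponent is $o(1)$ here) $|\varphi_X(t)-e^{-t^2/2}|\le e^{-t^2/2}\,O\big((t^2+|t|^3)/n\big)$; the error from $Y$ I would handle bluntly by $|\varphi_Z(t)-\varphi_X(t)|=\big|\E[e^{itX}(e^{itY}-1)]\big|\le|t|\,\E|Y|\le|t|\,\|Y\|_2=O(|t|/\sqrt n)$. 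Integrating over $|t|\le A$ gives $O(1/n)+O(A^2/\sqrt n)=O(n^{-1/2+\epsilon})$. On the tail range $n^{1/2+\epsilon}\le|t|\le\pi\sigma$, Lemma~\ref{Justin} gives $|\varphi_Z(t)|=O(|t|^{-50})$ so that $\int|\varphi_Z|=O\big(n^{-49(1/2+\epsilon)}\big)$, while $\int_{|t|\ge n^{1/2+\epsilon}}e^{-t^2/2}$ is superexponentially small; this range is negligible.

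The substance is the intermediate range $A\le|t|\le n^{1/2+\epsilon}$, where $e^{-t^2/2}\le n^{-\Omega(1)}$, so it suffices to bound $\int|\varphi_Z(t)|\,dt$. For each such $t$ I would set $k=k(t):=\max\{1,\lceil Cn\log n/t^2\rceil\}$ (so $1\le k\le n$ on this range, after rounding to keep $nk$ even), fix a $k$-regular subgraph $H\subseteq K_n$, and condition on $\beta:=\x$ restricted to the edges outside $H$. Restricting $Z$ to the edges of $H$ produces a conditional Fourier decomposition $Z=\E[Z\mid\beta]+\tilde X+\tilde Y$ with $\tilde X=\sum_{e\in E(H)}c_{e,\beta}\chi_e$, and \eqref{FourierTransform} shows $c_{e,\beta}$ equals $\hat Z(e)$ plus a mean-zero degree-$\le2$ polynomial in $\beta$ with $|c_{e,\beta}|=O(1/n)$ \emph{deterministically}; hence $\E_\beta[\mathrm{Var}(\tilde X\mid\beta)]\ge|E(H)|\hat Z(e)^2=\Theta(k/n)$, and since $\mathrm{Var}(\tilde X\mid\beta)=\sum_e c_{e,\beta}^2$ is a bounded-degree polynomial in $\beta$, Theorem~\ref{Hypercontractivity} gives $\mathrm{Var}(\tilde X\mid\beta)=\Omega(k/n)$ off an event of probability $n^{-\omega(1)}$. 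Because $|tc_{e,\beta}|=o(1)$ here, applying Lemma~\ref{Bernoulli} to each factor of $\varphi_{\tilde X\mid\beta}(t)=\prod_{e\in E(H)}\varphi_\chi(tc_{e,\beta})$ yields $|\varphi_{\tilde X\mid\beta}(t)|\le\exp\big(-\tfrac2{\pi^2}t^2\mathrm{Var}(\tilde X\mid\beta)\big)$, whence $\E_\beta|\varphi_{\tilde X\mid\beta}(t)|\le e^{-\Omega(t^2k/n)}+n^{-\omega(1)}\le n^{-10}$ for $C$ large. Meanwhile \eqref{FourierTransform} shows $\tilde Y$ collects only the weight-$2$ modes $\chi_{\{e,e'\}}$ with $e,e'\in E(H)$ incident and the weight-$3$ modes $\chi_\triangle$ with $\triangle\subseteq E(H)$ --- of which there are $O(nk^2)$ each, with $\E_\beta[(\text{coefficient})^2]=\Theta(1/n^4)$ --- so $\E[\tilde Y^2]=O(k^2/n^3)$ and $\E|\tilde Y|=O(k/n^{3/2})$. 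Combining $\varphi_Z(t)=\E_\beta\big[e^{it\E[Z\mid\beta]}\E[e^{it(\tilde X+\tilde Y)}\mid\beta]\big]$ with $|e^{it\tilde Y}-1|\le|t\tilde Y|$ gives $|\varphi_Z(t)|\le\E_\beta|\varphi_{\tilde X\mid\beta}(t)|+|t|\,\E|\tilde Y|=O\big(n^{-10}+|t|k/n^{3/2}\big)=O\big(\log n/(|t|\sqrt n)+|t|/n^{3/2}\big)$, which integrates over $A\le|t|\le n^{1/2+\epsilon}$ to $O\big((\log n)^2/\sqrt n\big)=O(n^{-1/2+\epsilon})$. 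Summing the three ranges proves the theorem.

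The main obstacle is this intermediate step. The point of revealing the complement of a $k$-regular subgraph is that it drops the weight-$1$ energy of $Z$ only to $\Theta(k/n)$ but drops the weight-$\ge2$ energy all the way to $\Theta(k^2/n^3)$, so a single parameter $k(t)$ can simultaneously force $\varphi_{\tilde X\mid\beta}(t)$ below $n^{-10}$ and keep $|t|\,\E|\tilde Y|$ integrably small; I expect the work to lie in (a) tracking exactly how the weight-$3$ mass of $\T$ leaks into the conditional weight-$1$ and weight-$2$ parts after conditioning, (b) establishing the deterministic bound $|c_{e,\beta}|=O(1/n)$ (so that Lemma~\ref{Bernoulli} is applicable) together with the hypercontractive concentration of $\mathrm{Var}(\tilde X\mid\beta)$, and (c) the combinatorics of path and triangle counts inside an arbitrary $k$-regular graph. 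A secondary point is the bookkeeping of thresholds: the bulk range must be pushed up to $\Theta(\sqrt{\log n})$ --- exactly where the crude $O(|t|/\sqrt n)$ bound still integrates to $O(n^{-1/2+\epsilon})$ --- so that the choice $k(t)\asymp n\log n/t^2$ stays in $[1,n]$ throughout the intermediate range.
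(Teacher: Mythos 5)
Your proof is correct and follows the same strategy as the paper: decompose $Z$ into its weight-one part $X$ and a remainder $Y$ with $\|Y\|_2=O(n^{-1/2})$, bound $|\varphi_Z-e^{-t^2/2}|$ for small $t$ by a Berry--Esseen-type estimate on $X$ together with $|t|\E|Y|$, handle the intermediate range by revealing the complement of a $k$-regular subgraph with $k=k(t)\asymp n/t^{2}$ (up to logs), and defer the far range to the Gilmer--Kopparty bound (Lemma~\ref{Justin}). Two small deviations are worth flagging: (i) you observe that $|\widehat{Z_\beta}(e)-\hat Z(e)|=O(n^{-3})$ holds \emph{deterministically}, since there are only $O(n)$ nonzero coefficients $\hat Z(e\cup T)$ with $T\neq\varnothing$, each of size $O(1/\sigma)$, and $|\chi_T(\beta)|$ is bounded for $|T|\le 2$ --- this makes the hypercontractive concentration step of Claim~\ref{GoodEvent} unnecessary, a genuine simplification, although you then redundantly invoke Theorem~\ref{Hypercontractivity} anyway; and (ii) on the bulk range you work with the true weight-one coefficient $\hat Z(e)$ and expand $\log\varphi_\chi$ directly (accepting $\mathrm{Var}(X)=1-\Theta(1/n)$), whereas the paper normalizes to $Q=\binom{n}{2}^{-1/2}$ so $\mathrm{Var}(X)=1$ exactly and then quotes Berry--Esseen --- both yield the same $O(n^{-1/2+\epsilon})$ contribution.
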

The work is done over 3 sections, each corresponding to different sizes of $t$.
In Section \ref{smallt} we prove the following bound which, while true for all $t$, is most useful for smaller values of $t$
\begin{lem}\label{smalltbound}
\begin{align*}
\left|\varphi_Z(t)-e^{-\frac{t^2}{2}}\right|=O\left(\frac{t^3e^{-\frac{t^2}{3}}}{n}+\frac{t}{\sqrt{n}}\right)
\end{align*}
\end{lem}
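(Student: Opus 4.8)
The plan is to decompose $Z = X + Y$, where $X$ collects the degree-$1$ Fourier mass and $Y$ collects the degree-$2$ and degree-$3$ parts. Concretely, set
\[
X := \frac{n-2}{\sigma}\,p^2\sqrt{p(1-p)}\sum_{e\in\binom{[n]}{2}}\chi_e
= \frac{c}{\sqrt{N}}\sum_{e} \chi_e,
\]
where $N = \binom{n}{2}$ and $c$ is chosen so that $\mathrm{Var}(X) = W^1/\sigma^2 = 1 + O(1/n)$; here I use from \eqref{Std} that the degree-$1$ weight $W^1$ dominates $\sigma^2$. Then $Y := Z - X$ is a sum over the pair-terms $\chi_{\{e_1,e_2\}}$ with $e_1\sim e_2$ and the triangle-terms $\chi_\triangle$, normalized by $\sigma$. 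Using \eqref{FourierTransform} and Parseval \eqref{parseval}, $\mathrm{Var}(Y) = (\sigma^2 - W^1)/\sigma^2 = O(1/n)$, so $\E[Y^2] = O(1/n)$ and hence $\E|Y| = O(1/\sqrt{n})$ by Cauchy–Schwarz.

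The main estimate proceeds by writing $\varphi_Z(t) = \E[e^{itZ}] = \E[e^{itX}e^{itY}]$ and applying the mean value theorem to $e^{itY}$ around $Y = 0$: $|e^{itY} - 1| \le |tY|$, so
\[
\bigl|\varphi_Z(t) - \E[e^{itX}]\bigr| \le |t|\,\E|Y| = O\!\left(\frac{|t|}{\sqrt{n}}\right).
\]
It remains to estimate $\E[e^{itX}]$. Since $X$ is a scalar multiple of a sum of $N$ i.i.d.\ copies of the Bernoulli-type variable from Lemma \ref{Bernoulli}, its characteristic function factors as $\E[e^{itX}] = \psi(t c/\sqrt{N})^N$ where $\psi$ is that single-coordinate characteristic function. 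A third-order Taylor expansion of $\log\psi$ — using that the summands have mean $0$, variance $1$, and bounded third moment — gives $\E[e^{itX}] = e^{-\mathrm{Var}(X)\,t^2/2}\bigl(1 + O(t^3/\sqrt{N})\bigr) = e^{-t^2/2}\bigl(1 + O(t^2/n) + O(t^3/n)\bigr)$, where the $t^2/n$ comes from $\mathrm{Var}(X) = 1 + O(1/n)$ and the cube from the third-moment error (note $\sqrt N = \Theta(n)$). Multiplying the $e^{-t^2/2}$ against the error terms and folding the $t^2 e^{-t^2/2}$ piece into $t^3 e^{-t^2/3}$ (valid since $t^2 e^{-t^2/6}$ is bounded, indeed $t^2 e^{-t^2/2} = O(t^3 e^{-t^2/3})$ away from $t$ near $0$, and both sides are $O(t^2)$ near $0$) yields $|\E[e^{itX}] - e^{-t^2/2}| = O(t^3 e^{-t^2/3}/n)$. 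Combining with the bound on $|\varphi_Z(t) - \E[e^{itX}]|$ gives the claim.

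The main obstacle is making the Taylor expansion of $\E[e^{itX}]$ uniform in $t$ over the full range, rather than just for $t = O(1)$: for $t$ comparable to $\sqrt{N}$ the single-coordinate argument $tc/\sqrt N$ is no longer small, so the naive $\log\psi$ expansion breaks down. The fix is that this lemma only needs to be \emph{useful} for small $t$ — for larger $t$ one should instead use Lemma \ref{Bernoulli} directly to get $|\psi(tc/\sqrt N)|^N \le (1 - \Omega(t^2/N))^N \le e^{-\Omega(t^2)}$, which is already $\le e^{-t^2/2}$-comparable and makes the stated bound trivially true (both terms being exponentially small). So the argument splits: for $|t| \le \eta\sqrt n$ (a small constant $\eta$) run the Taylor expansion carefully, tracking that the error in $\log\psi$ is $O(|t|^3/N)$ times a quantity controlled by the fourth moment times $e^{-ct^2}$; for $|t| > \eta\sqrt n$ use the crude decay bound on $|\varphi_X|$ together with $\E|Y| = O(1/\sqrt n)$ and the fact that $e^{-t^2/2}$ is then negligible. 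Keeping the two regimes consistent with the single stated error expression $O(t^3 e^{-t^2/3}/n + t/\sqrt n)$ is the one place where care is genuinely required.
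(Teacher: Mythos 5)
Your proposal is correct and follows the same broad strategy as the paper: decompose $Z=X+Y$ with $X$ the degree-one part, bound $\E|Y|=O(n^{-1/2})$ via Parseval and Cauchy--Schwarz, apply the mean value theorem to pass from $\E[e^{it(X+Y)}]$ to $\E[e^{itX}]$, and estimate $\E[e^{itX}]$ as the characteristic function of an iid sum. Two small deviations are worth flagging. First, the paper takes $X:=\sum_e Q\chi_e$ with $Q:=\binom{n}{2}^{-1/2}$, so that $\mathrm{Var}(X)=1$ \emph{exactly}, and absorbs the degree-one correction $\sum_e(\hat Z(e)-Q)\chi_e$ into $Y$ (showing $|\hat Z(e)-Q|=O(n^{-2})$ so this costs nothing); you instead keep the true coefficients $\hat Z(e)$, so $\mathrm{Var}(X)=1+O(1/n)$ and you must convert $e^{-\mathrm{Var}(X)t^2/2}$ to $e^{-t^2/2}$, incurring an extra $O(t^2 e^{-t^2/2}/n)$ term. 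Your folding remark that this is $O(t^3 e^{-t^2/3}/n)$ is not quite right near $t=0$ (there $t^2\not\le Ct^3$), but the term is dominated there by $t/\sqrt n$, so the final bound is unaffected. Second, and more substantively, the paper cites the nonuniform Berry--Esseen estimate (Petrov, Ch.~V Lemma~1) to get $|\E[e^{itX}]-e^{-t^2/2}|\le 16 L_n|t|^3 e^{-t^2/3}$ uniformly for all $|t|\le 1/(4L_n)=\Theta(n)$; this sidesteps entirely the uniformity issue you correctly flag in the $\log\psi$ Taylor-expansion route. Since for $|t|\gtrsim\sqrt n$ the lemma's claimed bound already exceeds a constant and is trivially true, the Petrov lemma covers the whole relevant range in one stroke, so the regime-splitting you sketch becomes unnecessary; citing the right form of Berry--Esseen is the cleaner choice.
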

Subsequently in Section \ref{midt} we prove the result for ``mid-sized'' $t$ that
\begin{lem}\label{midtbound}
Fix $0<\epsilon<1$.  Then
$$\int_{n^{\epsilon}}^{n^{\frac{1+\epsilon}{2}}} |\varphi_Z(t)|dt \le O_{\epsilon}(n^{-.5+\epsilon})$$
\end{lem}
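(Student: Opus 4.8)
The plan is to carry out the conditioning strategy sketched in the introduction. Fix $t$ in the stated range and let $H\subseteq\binom{[n]}{2}$ be a $k$-regular graph on $[n]$, with $k=k(t)$ to be chosen at the end. Split the coordinates of $\x$ into $\alpha$ (the edges of $H$) and $\beta$ (all other edges), reveal $\beta$, and write $Z_\beta\colon\alpha\mapsto Z(\alpha,\beta)$ for the conditional random variable; since $\varphi_Z(t)=\E_\beta\big[\E_\alpha e^{itZ_\beta}\big]$ we get $|\varphi_Z(t)|\le\E_\beta\,|\varphi_{Z_\beta}(t)|$ where $\varphi_{Z_\beta}(t):=\E_\alpha e^{itZ_\beta}$. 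Using \eqref{FourierTransform} and collecting the restricted terms, $Z_\beta=X_\beta+Y_\beta$ with $X_\beta=m_\beta+\sum_{e\in E(H)}a_e\chi_e$ carrying the degree-$\le 1$ mass and $Y_\beta$ (supported on the $\chi_S$ with $S\subseteq E(H)$, $2\le|S|\le 3$) carrying the rest. Here $a_e$ equals the original weight-$1$ coefficient $(n-2)p^2\sqrt{p(1-p)}/\sigma=\Theta(1/n)$ plus corrections coming from the restricted weight-$2$ and weight-$3$ terms of $\T$ whose remaining edge(s) fell into $\beta$; each correction is a sum of $O(n)$ monomials of size $O(1/\sigma)$ in the coordinates of $\beta$, hence has $\ell_2$-norm $O(n^{-3/2})$, negligible against $\Theta(1/n)$.

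Because the edges of $H$ remain independent after conditioning, the additive constant $m_\beta$ drops out of the modulus and $|\E_\alpha e^{itX_\beta}|=\prod_{e\in E(H)}|\E e^{ita_e\chi_e}|$; Lemma~\ref{Bernoulli}, whose hypothesis $|ta_e|<\sqrt{p(1-p)}\pi$ holds for typical $\beta$ since $|a_e|=O(1/n)$ and $|t|=o(n)$, then gives $|\E_\alpha e^{itX_\beta}|\le\exp\big(-\tfrac{2t^2}{\pi^2}\sum_{e\in E(H)}a_e^2\big)$. For the error piece, $\E_\alpha|Y_\beta|\le\|Y_\beta\|_2$, and every surviving degree-$\ge 2$ Fourier coefficient of $Z_\beta$ has size $O(1/\sigma)=O(1/n^2)$ while the number of them — pairs $e_1\sim e_2$ of $H$-edges sharing a vertex, plus triangles meeting $H$ in two or three edges — is $O(nk^2)$; hence $\|Y_\beta\|_2^2=O(k^2/n^3)$ deterministically. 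Combining, for typical $\beta$,
$$|\varphi_{Z_\beta}(t)|\le\exp\Big(-\tfrac{2t^2}{\pi^2}\sum_{e\in E(H)}a_e^2\Big)+O\big(|t|\,k/n^{3/2}\big).$$

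The remaining input is that $\sum_{e\in E(H)}a_e^2=\Theta(k/n)$ for all but an $n^{-\omega(1)}$ fraction of $\beta$: its mean over $\beta$ is $|E(H)|\cdot\Theta(1/n^2)=\Theta(k/n)$, and since it is a bounded-degree polynomial in the independent coordinates $\beta$ whose fluctuation part has far smaller $\ell_2$-norm, Theorem~\ref{Hypercontractivity} keeps it within a constant factor of its mean outside an event of super-polynomially small probability — on which one just uses $|\varphi_{Z_\beta}(t)|\le 1$. This produces
$$|\varphi_Z(t)|\le\exp\big(-\Omega(t^2k/n)\big)+O\big(|t|\,k/n^{3/2}\big)+n^{-\omega(1)}.$$
Finally, for $t\in[n^\epsilon,n^{\frac{1+\epsilon}{2}}]$ take $k=k(t)=\max\big(1,\lceil n\log^2 n/(ct^2)\rceil\big)$, a legal degree since $\log n\ll n^\epsilon\le t\le n^{\frac{1+\epsilon}{2}}=o(n)$; this pins the first term below $n^{-\log n}$ and makes the second at most $O\big(\log^2 n/(t\sqrt n)\big)+O\big(|t|/n^{3/2}\big)$, so integrating over the range yields $O(\log^3 n/\sqrt n)+O(n^{-0.5+\epsilon})=O_\epsilon(n^{-0.5+\epsilon})$. (Equivalently, split the range dyadically and use a single $k\asymp n/2^{2j}$ on $[2^j,2^{j+1}]$.)

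The crux is the bookkeeping behind the two exponents $k/n$ and $(k/n)^2$: one must track which restricted Fourier terms of $\T$ land in degree $1$ versus degree $\ge 2$, get the count $O(nk^2)$ of surviving higher-degree coefficients right — this is precisely where $Y$ contracts quadratically in $k/n$ while $X$ contracts only linearly — and verify the concentration of $\sum_{e\in E(H)}a_e^2$ so that the per-$\beta$ Bernoulli estimate can be applied uniformly. Given these, the choice of $k(t)$ and the final integration are routine.
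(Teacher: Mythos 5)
Your proposal is correct and follows essentially the same strategy as the paper: reveal the edges outside a $k$-regular graph $H$, write the restriction $Z_\beta$ as a dominant weight-$\le 1$ part plus a small higher-degree remainder, control the former edge-by-edge with Lemma~\ref{Bernoulli}, bound the latter by Cauchy--Schwarz via a count of pairs and triangles inside $H$, identify a high-probability good event using Theorem~\ref{Hypercontractivity}, and finally choose $k$ as a decreasing function of $t$ and integrate. Your choice $k\asymp n\log^2 n/t^2$ plays the same role as the paper's $k\asymp n\lceil |t|^{-2+\epsilon}\rceil$, and your observation that a general (not necessarily bipartite, hence possibly triangle-containing) $k$-regular $H$ still gives $\|Y_\beta\|_2^2=O(k^2/n^3)$ is right.

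The one place where the write-up is looser than the paper: you assert ``$|a_e|=O(1/n)$ for typical $\beta$'' in order to satisfy the hypothesis $|ta_e|<\pi\sqrt{p(1-p)}$ of Lemma~\ref{Bernoulli}, but your concentration argument is then only given for the aggregate $\sum_e a_e^2$. A concentration bound on the sum does not by itself give a uniform bound on $\max_e|a_e|$, which you need for \emph{every} edge of $H$ simultaneously before the product form $\prod_e |\E e^{ita_e\chi_e}|$ can be estimated. The paper resolves this by applying hypercontractivity to each $\widehat{Z_\beta}(e)$ individually (degree $\le 2$ in $\beta$, mean $\Theta(1/n)$, standard deviation $O(n^{-3/2})$) and then union-bounding over the $O(nk)$ edges of $H$; this single step delivers both the per-edge max bound and the lower bound $\sum_e a_e^2=\Omega(k/n)$, making your separate aggregate concentration redundant. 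This is a reorganization rather than a missing idea, but the per-edge union bound should be made explicit for the argument to close.
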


Lastly for $|t|\ge {n^{\frac{1+\epsilon}{2}}}$ we simply cite Lemma \ref{Justin}.
Combining all these results immediately gives Thoerem \ref{MainTriangle}.  For completeness we give the proof.
\begin{proof}[Proof of Theorem \ref{MainTriangle}]
\begin{align*}
\int_{-\pi \sigma_n}^{\pi \sigma_n}\left|\varphi_Z(t)-e^\frac{-t^2}{2}\right|&=\int_{|t|<n^{0.05}}\left|\varphi_Z(t)-e^\frac{-t^2}{2}\right|+\int_{n^{0.05}<|t|<n^{.5+\frac{\epsilon}{10}}}\left|\varphi_Z(t)-e^\frac{-t^2}{2}\right|\\
&+
\int_{n^{.5+\frac{\epsilon}{10}}<|t|<\pi\sigma_n}\left|\varphi_Z(t)-e^\frac{-t^2}{2}\right|\\
&\le \int_{|t|<n^{\epsilon}}O\left(\frac{t^3e^{-\frac{t^2}{3}}}{n}+\frac{t}{\sqrt{n}}\right)dt+O_{p,\epsilon}(n^{-.5+\epsilon})+O(n^{-50})+2\left|\int_{n^{\epsilon}}^{\pi \sigma_n} {e^{-\frac{t^2}{2}}}dt\right|\\
&=O_\epsilon(n^{-0.5+2\epsilon})
\end{align*}

\end{proof}

\subsection{Bounds for small $t$}\label{smallt}
In this section we prove the following result
\begin{replem}{smalltbound}
\begin{align*}
\left|\varphi_Z(t)-e^{-\frac{t^2}{2}}\right|=O\left(\frac{t^3e^{-\frac{t^2}{3}}}{n}+\frac{t}{\sqrt{n}}\right)
\end{align*}
\end{replem}
This shows that the characteristic function of $Z$ is very close to that of $N(0,1)$ for small $t$.  In that regard this result is a generalization of a central limit theorem for $T$, which is equivalent to the pointwise convergence of $\varphi_Z(t)$ to $e^{-t^2/2}$.
\begin{proof}
We can decompose $Z$ into two parts, the dominant weight one part $X$, and a smaller term corresponding to fourier coefficients of weight $\ge 2$.  In particular let $Q:=\sqrt{\frac{1}{{n\choose 2}}}$.  Then we define
\begin{align*}
X:=\sum_{e\in {[n]\choose 2}} Q \chi_e&&Y:=\sum_{e\in {[n]\choose 2}} (\hat Z(e)-Q)\chi_e+\sum_{|S|\ge 2} \hat Z(S) \chi_S
\end{align*}

First we examine $X$.  It is the mean 0 variance 1 sum of independent  random variables, and so by Berry-Esseen (see Petrov \cite{Petrov}, Chapter V lemma 1) we know that if 
$$L_n:={n\choose 2}\E[|Q\chi_e|^3]=\frac{p^2+(1-p)^2}{\sqrt{{n\choose 2}p(1-p)}}=\Theta_p\left(1/n\right)$$
then for $t\le \frac{1}{4L_n}$ we have that
\begin{equation}\label{eqBerryEsseen}
\left|\E[e^{itX}]-e^{-\frac{t^2}{2}}\right| \le 16L_n|t|^3e^{\frac{-t^2}{3}}
\end{equation}
%
%
%
%

Now we turn our attention to $Y$.  $Y$ is best thought of as an error term.  It is where the dependence of our random variable $Z$ lives, and it will be always very small.  In particular, using Cauchy-Schwarz and the orthogonality
of our basis we obtain
$$\E|Y|\le \sqrt{\E |Y|^2}=var(Y)=\sum_e (\hat Z(e)-Q)^2+\sum_{|S|\ge 2} \hat Z^2(S)$$

We know from prior calculations that 
$$\sum_{|S|\ge 2} \hat Z^2(S)=\frac{3{n\choose 3} p^4(1-p)^2+{n\choose 3}p^3(1-p)^3}{\sigma^2}=O\left(\frac1n\right)$$
Further we can estimate
$${n\choose 2}\sigma^2\hat Z^2(e)-\sigma^2= {n\choose 2}\hat T^2(e)-\sigma^2=O(n^3)\implies \hat Z^2(e)-\frac{1}{{n\choose 2}}=O(n^{-3})$$
Therefore using the fact that $(x-y)=(x^2-y^2)/(x+y)$ coupled with the observation that $\hat Z(e)+Q=\Theta\left(\frac{1}{n}\right)$, we find that
$$|\hat Z(e)-Q|\le \left|\frac{ \hat Z^2(e)-\frac{1}{{n\choose 2}}}{\hat Z(e)+Q}\right|=O\left(\frac{1}{n^2}\right)$$

So as a result we can conclude that $var(Y)=O(1/n)$ and so $\E[|Y|]=O(\frac{1}{\sqrt n})$.  Now we are ready for our characteristic function bound for $Z$.  If $|t|\le \frac{1}{4L_n}=\Theta_p(n)$ then combining the above with equation \ref{eqBerryEsseen} yields.

\begin{align*}
\left|\varphi_Z(t)-e^{-\frac{t^2}{2}}\right|&=\left|\E\left[e^{itZ}\right]-e^{-\frac{t^2}{2}}\right|=\left|\E\left[e^{it(X+Y)}\right]-e^{-\frac{t^2}{2}}\right|\le \left|\E\left[e^{itX}\right]-e^{-\frac{t^2}{2}}\right|+\left|\E\left[e^{itX+Y}\right]-\E e^{itX}\right|\\
&\le 16L_n|t|^3e^{\frac{-t^2}{3}}+\E|tY|=O\left(\frac{t^3e^{-\frac{t^2}{3}}}{n}+\frac{t}{\sqrt{n}}\right)
\end{align*}
The last inequality comes from simply applying the mean value theorem to the function $e^{itx}$.  The first term in the error is dominated for any choice of $t$, and so we can simplify the error to $|\varphi_Z(t)-e^{-t^2/2}|=O(tn^{-1/2})$.
\end{proof}

\subsection{Bounds for slightly larger $t$}\label{midt}
Here we perform the same operation as above, except we first reveal a fraction of the edges.  The intuition behind this is that revealing a $q$ fraction of the edges will reduce the number of edge variables over which we take our expectation by $q$,
but it will reduce the influence of larger sets by even more, namely by $q^{|S|}\ge q^2$.  Thus in the above estimate when we decompose $Z$ into $X+Y$ we will find that $Y$ will be significantly smaller, allowing us to get a better estimate.

For any natural number $k$, we can take $H$ to be a $k$-regular bipartite graph on $n$ vertices.  Then it makes sense to talk about the restriction of $Z$ to the variables in $H$.  That is we are \textit{revealing} the edges in $H^c$ to be some vector $\beta\in \{0,1\}^{H^c}$, and consider the function $Z_\beta:\{0,1\}^H\to \mathbb{R}$ given by $Z_\beta(\alpha)=Z(\alpha,\beta)$.
First we note that by the law of total probability we have that
$$\E[e^{itZ}]=\E_{\beta\in\{0,1\}^{H^c}}\E_{\alpha \in \{0,1\}^{H}}[e^{itZ_\beta(\alpha)}]$$

So now we turn our attention to examining the form $Z_\beta$ takes for a typical restriction $\beta$.  First let us consider what happens to a generic basis function $\chi_S$ (
For a general consideration of how restriction interacts with fourier bases (particularly in the case of $p=\frac12$)  see \cite{ODonnell} Chapter 3.3 ).
  If we split $S$  as $S=S_H\cup S_{H^c}$ where $S_H\subset H$ and $S_H^c\subset H^c$ then
$$(\chi_S)_\beta(\x)=\chi_{S_{H^c}}(\beta)\chi_{S_H}(\x)$$
So we can use this to compute the Fourier transform of $Z_\beta:\{0,1\}^H\to \mathbb{R}$.  For an arbitrary $S\subset H$ we will have that
\begin{equation}\label{Coefficients}
\widehat{Z_\beta}(S)=\sum_{T\subset H^c} \chi_{T}(\beta)\hat Z(S\cup T)
\end{equation}

If we fix $S$, and think of $\beta$ as an input, then $\widehat{Z_\beta}(S)$ can be viewed as a function of $\beta$, $\widehat{Z_\beta}(S):\{0,1\}^{H^c}\to \mathbb{R}$.

\begin{claim}\label{GoodEvent}
Let $A$ be the event (over the space of revelations $\beta \in \{0,1\}^{H^c}$) that for \emph{every} edge $e\in H$ we have that
$$|\widehat{Z_\beta}(e)-\hat{Z}(e)|<\frac{\sqrt{3}n^{0.6}}{\sigma}$$
Let $\lambda:=\min(p,1-p)$.  Then $\Pr(A)\ge 1-n^2\lambda^2e^{-\lambda n^{.01}}$.
\end{claim}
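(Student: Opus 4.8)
The plan is to fix an edge $e\in H$, view $\widehat{Z_\beta}(e)-\hat Z(e)$ as a function of the revealed coordinates $\beta\in\{0,1\}^{H^c}$, show that (after multiplying by $\sigma$) this is a degree-$\le 2$ polynomial of $\ell^2$-norm only $O(\sqrt n)$, apply the hypercontractive tail bound of Theorem~\ref{Hypercontractivity}, and conclude with a union bound over $e\in H$.

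First I would combine \eqref{Coefficients} with the Fourier support of $\T$ recorded in \eqref{FourierTransform}. Since $\hat Z(S)=\hat\T(S)/\sigma$ is nonzero only when $S$ is $\varnothing$, a single edge, a pair of incident edges, or a triangle, the only sets $T\subset H^c$ contributing to $\widehat{Z_\beta}(e)=\sum_{T\subset H^c}\chi_T(\beta)\,\hat Z(\{e\}\cup T)$ are $T=\varnothing$ (whose contribution is exactly $\hat Z(e)$), singletons $T=\{f\}$ with $f\in H^c$ and $f\sim e$, and pairs $T=\{f,g\}$ with $f,g\in H^c$ and $\{e,f,g\}$ a triangle. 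Writing $g_e(\beta):=\sigma\bigl(\widehat{Z_\beta}(e)-\hat Z(e)\bigr)$, this gives
\begin{align*}
g_e(\beta)&=\sum_{\substack{f\in H^c\\ f\sim e}}p^2(1-p)\,\chi_f(\beta)\\
&\qquad+\sum_{\substack{\{f,g\}\subset H^c\\ \{e,f,g\}=\triangle}}p^{3/2}(1-p)^{3/2}\,\chi_{\{f,g\}}(\beta),
\end{align*}
which, viewed as a function on $\{0,1\}^{H^c}$ under the $p$-biased measure, is a polynomial of degree at most $2$ with no constant term.

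Next I would bound $\|g_e\|_2$. By orthonormality of the $\chi_T$, $\|g_e\|_2^2$ is the sum of the squares of the coefficients above; each coefficient is at most $1$ in absolute value, $e$ is incident to at most $2(n-2)$ other edges, and $e$ lies in at most $n-2$ triangles, so $\|g_e\|_2^2\le 3(n-2)<3n$, i.e.\ $\|g_e\|_2<\sqrt{3n}$. If $\|g_e\|_2=0$ then $g_e\equiv 0$ and the desired bound holds for that $e$ trivially; otherwise put $t:=\sqrt3\,n^{0.6}/\|g_e\|_2>n^{0.1}$, so that $t\|g_e\|_2=\sqrt3\,n^{0.6}$ and the bad event for $e$ is exactly $\{|g_e(\beta)|\ge t\|g_e\|_2\}$. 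For $n$ large enough that $n^{0.1}\ge 2e/\lambda$, Theorem~\ref{Hypercontractivity} applied to the degree-$2$ polynomial $g_e$ gives
\begin{align*}
\Pr\bigl(|\widehat{Z_\beta}(e)-\hat Z(e)|\ge \tfrac{\sqrt3\,n^{0.6}}{\sigma}\bigr)&=\Pr\bigl(|g_e(\beta)|\ge t\|g_e\|_2\bigr)\\
&\le\lambda^2\exp\!\left(-\tfrac{\lambda t}{e}\right)\le\lambda^2\exp\!\left(-\tfrac{\lambda}{e}n^{0.1}\right)\le\lambda^2 e^{-\lambda n^{0.01}},
\end{align*}
the last inequality holding once $n^{0.09}\ge e$.

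Finally I would union bound over the edges of $H$: being $k$-regular on $n$ vertices, $H$ has $nk/2<n^2$ edges, so $\Pr(A^c)\le n^2\lambda^2 e^{-\lambda n^{0.01}}$, which is the claim. The only real content is the first step — that the $\beta$-dependence of the weight-$1$ Fourier coefficient of $Z_\beta$ is governed entirely by the very sparse Fourier support of $\T$, so the fluctuation is a low-degree polynomial of variance only $O(n)$ while we demand deviations of order $n^{0.6}$; after that, hypercontractivity and a union bound finish it, and tracking the numerical exponents $0.6,0.1,0.01$ is routine bookkeeping.
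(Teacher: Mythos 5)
Your proof is correct and follows essentially the same route as the paper's: use equation \eqref{Coefficients} to express $\widehat{Z_\beta}(e)-\hat Z(e)$ as a mean-zero degree-$2$ polynomial in the revealed $\chi$-variables, bound its variance by $3(n-2)/\sigma^2$ from the sparse Fourier support of $\T$, apply Theorem \ref{Hypercontractivity} with $k=2$, and union bound over the $\le n^2$ edges of $H$. You are a bit more careful than the paper in checking the hypothesis $t\ge 2e/\lambda$ and in explicitly bridging the exponent from $n^{0.1}/e$ down to the claimed $n^{0.01}$, but these are routine asymptotic checks rather than a different argument.
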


\begin{claim}\label{GoodBound}
Assume $\beta\in A$.  Then for $t\le \sigma \pi\sqrt{p(1-p)}/2n=\Theta_p(n)$
$$|\E_{\alpha \subset H}[e^{itZ_\beta(\alpha)}]|\le \exp\left(-\frac{kt^2n^3}{4\pi^2 \sigma^2}\right)+\frac{4|t|n{k \choose 2}}{\sigma^2}$$
\end{claim}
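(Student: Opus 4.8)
The plan is to carry out, now for the restricted function $Z_\beta$ on $\{0,1\}^{H}$, the same ``dominant linear part plus small error'' decomposition that proved Lemma~\ref{smalltbound}; the payoff of having revealed $\beta$ on $H^{c}$ is that the restriction annihilates nearly all of the higher-weight Fourier mass, so the error is much smaller than in the unrevealed case. First I would use \eqref{Coefficients} together with the explicit transform \eqref{FourierTransform} to read off the Fourier support of $Z_\beta$: since $\hat\T$ is supported on sets of size at most $3$, only $|R|\le 2$ can occur for $R\subseteq H$, and since $H$ is bipartite it contains no triangle, so there is no weight-$3$ contribution either. Thus the weight-$\ge 2$ part of $Z_\beta$ consists exactly of the cherry terms $R=\{e_{1},e_{2}\}$ with $e_{1}\sim e_{2}$, for which $\widehat{Z_\beta}(\{e_{1},e_{2}\})=\bigl(p^{2}(1-p)+p^{3/2}(1-p)^{3/2}\chi_{f}(\beta)\bigr)/\sigma$, where $f$ is the edge completing the triangle (and $f\in H^{c}$ automatically, again by bipartiteness). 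In particular $|\widehat{Z_\beta}(R)|=O_{p}(1/\sigma)$ uniformly in $\beta$, a bound I will need because the good event $A$ controls only the weight-one coefficients.

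Next I would split $Z_\beta=\widehat{Z_\beta}(\varnothing)+X_\beta+Y_\beta$ with $X_\beta:=\sum_{e\in H}\widehat{Z_\beta}(e)\chi_{e}$ and $Y_\beta:=\sum_{|R|\ge 2}\widehat{Z_\beta}(R)\chi_{R}$. The constant $\widehat{Z_\beta}(\varnothing)$ contributes only a unit-modulus phase, and $|e^{iu}-e^{iv}|\le|u-v|$, so $|\E_\alpha[e^{itZ_\beta}]|\le|\E_\alpha[e^{itX_\beta}]|+\E_\alpha|tY_\beta|$. For the first term, $X_\beta$ is a sum of the $|E(H)|=nk/2$ independent variables $\widehat{Z_\beta}(e)\chi_{e}$, so $\E_\alpha[e^{itX_\beta}]=\prod_{e\in H}\E[e^{it\widehat{Z_\beta}(e)\chi_{e}}]$. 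On $A$ we have $|\widehat{Z_\beta}(e)|\le|\hat Z(e)|+\sqrt3\,n^{0.6}/\sigma\le 2n/\sigma$ for large $n$, so the hypothesis $t\le\sigma\pi\sqrt{p(1-p)}/2n$ keeps every $|t\widehat{Z_\beta}(e)|<\pi\sqrt{p(1-p)}$, which is exactly the range in which Lemma~\ref{Bernoulli} applies and gives $|\E[e^{it\widehat{Z_\beta}(e)\chi_{e}}]|\le 1-2t^{2}\widehat{Z_\beta}(e)^{2}/\pi^{2}$. Multiplying and using $1-x\le e^{-x}$ yields $|\E_\alpha[e^{itX_\beta}]|\le\exp(-\tfrac{2t^{2}}{\pi^{2}}\sum_{e\in H}\widehat{Z_\beta}(e)^{2})$; and on $A$ each $\widehat{Z_\beta}(e)^{2}\ge(1-o(1))\hat Z(e)^{2}=(1-o(1))(n-2)^{2}p^{5}(1-p)/\sigma^{2}$, so summing over the $nk/2$ edges of $H$ bounds the exponent below and produces the $\exp(-kt^{2}n^{3}/4\pi^{2}\sigma^{2})$ term.

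For the error term I would apply Parseval on $\{0,1\}^{H}$: $\E_\alpha|Y_\beta|\le(\E_\alpha Y_\beta^{2})^{1/2}=\bigl(\sum_{R}\widehat{Z_\beta}(R)^{2}\bigr)^{1/2}$, the sum running over the $n\binom{k}{2}$ cherries of $H$. Feeding in the uniform-in-$\beta$ bound $|\widehat{Z_\beta}(R)|=O_{p}(1/\sigma)$ from the first paragraph gives $\E_\alpha|tY_\beta|=O_{p}\bigl(|t|\sqrt{n\binom{k}{2}}/\sigma\bigr)$, which is the $\tfrac{4|t|n\binom{k}{2}}{\sigma^{2}}$-type contribution. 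Adding this to the bound of the previous paragraph proves the claim.

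The main obstacle is the error term, and really the preliminary step of pinning down the Fourier support of the restriction itself: the estimate needs a bound on the weight-$2$ coefficients of $Z_\beta$ that holds for \emph{every} revelation $\beta$, since the event $A$ says nothing about those coefficients, and it is precisely the bipartiteness of $H$ — which kills all the weight-$3$ (triangle) terms of $Z_\beta$ — that keeps $Y_\beta$ small enough for the argument to close. By comparison the main-term estimate is a routine product bound via Lemma~\ref{Bernoulli}; the only care needed there is checking that the stated range of $t$ is exactly what keeps each Bernoulli phase inside the region of validity of that lemma, which is why the hypothesis $t\le\sigma\pi\sqrt{p(1-p)}/2n$ appears.
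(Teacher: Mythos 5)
Your proof is correct and follows essentially the same route as the paper: decompose $Z_\beta$ into the weight-one part $X_\beta$ and the weight-$\ge 2$ remainder $Y_\beta$, bound $|\E_\alpha[e^{itX_\beta}]|$ as a product of Bernoulli phases via Lemma~\ref{Bernoulli} using the coefficient bounds from event $A$, and bound $\E_\alpha|Y_\beta|$ by Parseval together with a uniform-in-$\beta$ estimate on the cherry coefficients $\widehat{Z_\beta}(\{e_1,e_2\})$. You are somewhat more explicit than the paper about the constant term $\widehat{Z_\beta}(\varnothing)$ being a pure phase, about the fact that $A$ only controls the weight-one coefficients (so the weight-two bound must be uniform over $\beta$), and about bipartiteness of $H$ both killing the weight-three terms and placing the completing edge $f$ in $H^c$, but these are careful explications of the same argument rather than a different one.
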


Assuming these two claims we can prove the main result for this subsection.

\begin{replem}{midtbound}
Fix $0<\epsilon<1$.  Then
$$\int_{n^{\epsilon}}^{n^{\frac{1+\epsilon}{2}}} |\varphi_Z(t)|dt \le O_{\epsilon}(n^{-.5+\epsilon})$$
\end{replem}

\begin{proof}
Let $A$, as in Claim \ref{GoodEvent}, be the event that for all $e\in H$ we have that $|\widehat{Z_\beta}(e)-\hat{Z}(e)|<\frac{\sqrt{3}n^{0.6}}{\sigma}$.
We can break up $\{0,1\}^{H^c}$ into $A$ and $A^c$ and estimate

\begin{align*}
|\varphi_Z(t)|&:=\E_{(\alpha,\beta)\in 2^{n\choose 2}}[e^{itZ(\alpha,\beta)}]\le \E_{\beta\subset H^c}|\E_{\alpha\subset H}[e^{itZ_\beta(\alpha)}]|\le \Pr(A)+ (1-\Pr(A))\E_{\beta \in A^c}\left|\E_{\alpha}[e^{itZ_\beta}]\right|
\end{align*}
Now combining Claims \ref{GoodEvent} and \ref{GoodBound} we find that

\begin{align*}
\Pr(A^c)+ (Pr(A))\E_{\beta \in A}\left|\E_{\alpha}[e^{itZ_\beta}]\right|&\le  \lambda^2n^2e^{-\frac{\lambda}{e} n^{0.1}}+ \exp\left(-\frac{kt^2n^3}{4 \pi^2\sigma^2}\right)+\frac{2k|t|\sqrt{n}}{\sigma}
\end{align*}

We may choose $k$ to be an integer of size $n\lceil|t|^{-2+\epsilon}\rceil$ (which may be done for $0<|t|\le n^{\frac{1+\epsilon}{2}}$).  Recalling that $\sigma =\Theta(n^2)$ we find that
$$|\varphi_Z(t)|=O\left(n^2e^{-\frac{\lambda}{e} n^{0.1}}+\exp\left(-{\frac{-t^{\epsilon}n^4}{4\pi^2\sigma^2}}\right)+\frac{1}{|t|^{1-\epsilon}\sqrt{n}}\right)$$

Using this  we may make the following estimate
$$\int_{n^{\epsilon}}^{n^{\frac{1+\epsilon}{2}}} |\varphi_Z(t)|dt\le  O\left(n^{2+\frac{1+\epsilon}{2}}e^{-\frac{\lambda}{e} n^{0.1}}+n^{\frac{1+\epsilon}{2}}\exp\left(-n^{\epsilon}\right)+\left[ \frac{1}{\epsilon}t^{\epsilon}n^{-.5}\right]_{n^{\epsilon}}^{n^{\frac{1+\epsilon}{2}}}\right)=O_{\epsilon}\left( n^{-.5+\epsilon}\right)$$
\end{proof}

\subsubsection{Proof Of Claims \ref{GoodEvent} and \ref{GoodBound}}

\begin{repclaim}{GoodEvent}
Let $A$ be the event (over the space of revelations $\beta \in \{0,1\}^{H^c}$) that for \emph{every} edge $e\in H$ we have that
$$|\widehat{Z_\beta}(e)-\hat{Z}(e)|<\frac{\sqrt{3}n^{0.6}}{\sigma}$$
Let $\lambda:=\min(p,1-p)$.  Then $\Pr(A)\ge 1-n^2\lambda^2e^{-\lambda n^{.01}}$.
\end{repclaim}

We prove Claim \ref{GoodEvent} by noting that the formula for $\widehat{Z_\beta}(S)$ (a coefficient in the polynomial $Z_\beta$) is \emph{itself} a low degree polynomial, and therefore may be shown to have tight concentration by Theorem \ref{Hypercontractivity}.

\begin{proof}[Proof Of Claim \ref{GoodEvent}]

Recall equation \ref{Coefficients} which states that
$$\widehat{Z_\beta}(e)=\sum_{T\subset H^c} \hat Z(e\cup T)\chi_{T}(\beta)$$

$\widehat{Z_\beta}(e):\{0,1\}^{H^c}\to\mathbb{R}$ is a polynomial (in the functions $\chi_e$), and we can began by estimating its coefficients.
First we see that 
$$\E[\widehat{Z_\beta}(e)] =\widehat{\widehat{Z}_\beta(e)}(\varnothing)=\hat{Z}(e)$$

Also for any $T\subset \{0,1\}^{H^c}$ we know that $\hat{Z}(e\cup T)\neq 0$ iff $e$ and $T$ are in a common triangle.  There are at most $3(n-2)$ choices of $T$ (corresponding to completing the $n-2$ triangles containing the edge $e$).  Therefore Combining this with the fact that $\hat Z(S')\le \sigma^{-1}$ for all sets of size $|S'|\ge 2$ we find that
$$var_\beta( \widehat{Z_\beta}(e))=\sum_{\substack{T\subset H^c\\T\neq \varnothing}} \hat{Z}(e\cup T)^2\le \frac{3(n-2)}{\sigma^2}$$

Since $\widehat{Z_\beta}(e)$ has degree 2, an application of Theorem \ref{Hypercontractivity} gives us that for any $e\in H$ if we set $\lambda=\min(p,1-p)$ then
$$\Pr\left[\left|\widehat{Z_\beta}(e)-\hat Z(e)\right|\ge \frac{\sqrt{3}n^{0.6}}{\sigma}\right]<\lambda^2\exp\left({-\frac{\lambda n^{0.1}}{e}}\right)$$
Applying a union bound over all edges in $H$ completes the proof.
\end{proof}

Claim \ref{GoodBound} is concerned with estimating $|\E[e^{itZ_\beta}]|$, given that $\beta$ is a typical, well behaved revelation.  When $\beta$ is well behaved $Z_\beta$ will be dominated by a sum of independent monomials, and so the proof proceeds in a manner very similar to the arguments in Section \ref{smallt}.

\begin{repclaim}{GoodBound}
Recall $A$ is the event in  $2^{H^c}$ such that for all edges $e\in H$ we have $\left|\widehat{Z_\beta}(e)-\hat Z(e)\right|\le \sqrt{3}n^{.6}$ (that is, the set of all revelations of the edges of $H^c$ which are well behaved).

Assume $\beta\in A$.  Then for $t\le \sigma \pi\sqrt{p(1-p)}/2n=\Theta_p(n)$
$$|\E_{\alpha \subset H}[e^{itZ_\beta(\alpha)}]|\le \exp\left(-\frac{kt^2n^3}{4\pi^2 \sigma^2}\right)+\frac{2k|t|\sqrt{n}}{\sigma}$$
\end{repclaim}
\begin{proof}[Proof of Claim 2]
Assume that $\beta\in A$.  Let $X$ and $Y$ be
\begin{align*}
X:=\sum_{e\in {[n]\choose 2}} \widehat{Z_\beta}(e) \chi_e&&Y:=\sum_{|S|\ge 2} \widehat{Z_\beta}(S) \chi_S
\end{align*}
then $Z_\beta=X+Y$, and we will be able to obtain bounds similar to our previous ones.  In particular $X$ is the sum of indpendent random variables so if for each $e$ we define $Q_e:=\widehat{Z_\beta}(e)$ then
we will have because of our assumptions that $\frac{n}{2\sigma} \le \hat{Z}(e)-\frac{\sqrt{3}n^{.6}}{\sigma}\le Q_e\le \frac{2n}{\sigma}$

because $\widehat{Z_\beta}(e)\le \frac{2n}{\sigma}$ and $t\le \frac{\sigma\pi\sqrt{p(1-p)}}{2n}$ we can use Lemma $\ref{Bernoulli}$ to show that
$$|\E[e^{it\widehat{Z_\beta}(e)\chi_e}]|\le 1-\frac{t^2n^2}{2\pi^2\sigma^2}\le \exp\left(-\frac{t^2n^2}{2\pi^2\sigma^2}\right)$$ 

So now we find that
\begin{align*}
\E[e^{itX}]=\prod_{e\in H}\E[\exp\left(it\widehat{Z_\beta}(e)\chi_e\right)]\le \exp\left(-\sum_{e\in H}(t\widehat{Z_\beta(e)})^2\right)\le \exp\left(\sum_{e\in H}-\frac{t^2n^2}{\pi^2\sigma^2}\right)=\exp\left(-\frac{kt^2n^3}{4\pi^2\sigma^2}\right)
\end{align*}

Now we turn our attention to $Y$.  If $|S|=2$ with $S=\{e_1,e_2\}$ then $\hat{Z}(S)$ is 0 unless $e_1\sim e_2$, and therefore $e_1,e_2$ lie in a common triangle $\triangle=\{e_1,e_2,e_3\}$.  However this is the only triangle containing $S$, and so we can quickly compute using equation \ref{Coefficients}, and the fact for $|S|\ge 2$ we have $|\hat{Z}(S)|\le \frac{1}{\sigma}$ (see equation \ref{FourierTransform} and normalize to obtain $Z$) that
$$\widehat{Z_\beta}(S)=\sum_{T\subset H^c} \chi_{T}(\beta)\hat Z(S\cup T)=  \chi_{\varnothing}(\beta)\hat{Z}(S)+\chi_{e_3}(\beta)\hat{Z}(\triangle)\le \frac{2}{\sigma}$$
So we can compute, again using Cauchy Schwartz and the fact that $H$ is $k$-regular that
$$\E[|Y|]^2\le \E[|Y|^2]=\sum_{\substack{e_1\sim e_2\\e_1,e_2\in H}} \widehat{Z_\beta}^2(S)\le n{k \choose 2} \frac{4}{\sigma^2}$$
Combining this information, we compute that

\begin{align*}
\left|\E_{\alpha \in 2^{H}} [e^{itZ_\beta(\alpha)}]\right|&=\left|\E_{\alpha}[e^{it(X+Y)}]\right|\le \left|\E[e^{itX}+|tY|]\right|\\
&\le \exp\left(-\frac{kt^2n^3}{2\pi^2\sigma^2}\right)+|t|\sqrt{{k \choose 2}{n} \frac{(2)^2}{\sigma^2}}
\end{align*}
\end{proof}
\section{General Subgraph Counts in $G(n,p)$}\label{graphstatsection}
In this section we take the arguments we have used so far in this chapter and extend them to counting subgraphs other than triangles.  We will be able to give good characteristic function bounds
for the corresponding random variables, however these results as of yet do not yield any local limit theorems for any graphs on more than 3 vertices.  We will, however, be able to give a new proof of quantitative central limit theorems
for subgraph counts in $G(n,p)$.  Section \ref{subgraphsetup} will introduce necessary notation and definitions.   Section \ref{subgraphmains} will contain the main results of this section.  The remaining sections will cover the properties of graph
statitics and then the proofs of the theorems.
\subsection{Definitions and Graph Statistics}\label{subgraphsetup}
Falling factorials will frequently appear in our analysis, and we will use the following notation:
\begin{define}
Let $n,k\in \mathbb{N}$.  We define $(n)\downarrow_k:=\prod_{i=0}^{k-1}(n-i)$.  For the case $k=0$ we set $(n)\downarrow_{0}=1$.
\end{define}
\noindent Throughout this section we will be working with functions defined on graphs.  To capture subgraph counts we will need two graphs:  our random graph $G$ on a large growing vertex set of size $n$, and a second graph $\Gamma$ on vertex sets of a fixed size $k$ that will define the subgraphs we are interested in counting.
\begin{define} Let $\mathcal{S}_G:=\mathcal{S}_G(n,k)$ denote the set of all labeled (with vertices distinguishable by their origin in $[n]$, and given a labeling in $[k]$) induced subgraphs of the graph $G$ with $k$ vertices.  It will also be useful to denote this as the set of injections of $\psi:[k] \hookrightarrow [n]$, with the map extended to edges in the obvious way.
\end{define}
Let's denote the edge set in the big graph to be $E={[n] \choose 2}$ and the edge set in the small graph to be $D:={[k]\choose 2}$.
Here we will give a standard notation to a slight generalization of subgraph counts, which we will call graph statistics, and the rest of this section will be concerned with analyzing such functions
\begin{define}\label{graphstat}
Fix a graph function $f:2^{[k]\choose 2}\to\mathbb{R}$.  For any $n\ge k$ we can define the \emph{graph statistic} $F_f:2^{[n]\choose 2}$ (typically denoted simply as $F$) for $f$ to be
$$
F(G):=F_f(G):=\sum_{\Gamma\in \mathcal{S}_G} f(\Gamma)
$$
\end{define}
A function $F(G)$ defined this way sums $f$ as applied to all ordered subgraphs of size $k$ in $G$.  In particular if $f$ is the indicator of a fixed graph $H$ (induced or otherwise), then the graph statistic $F(G)$ counts the number of copies of this graph inside $G$.
To help our study of the properties of $F$, it will be useful to have some notation aggregating information about the base function $f$.
\begin{define}\label{hnotation}
For a set $T\subset E$ let
$$h_T:=\sum_{\phi:\supp(T) \hookrightarrow D} \hat f(\psi(T))$$
where the summation is over all injections of $\supp(T)$ into $D={[k]\choose 2}$.
\end{define}
Note the arrow here is {reversed} from the maps in the definition of $\mathcal{S}_G$.   Also, $h_T$ depends only on the isomorphism class of $T$, and importantly does not change as the parameter $n$ changes.  It will also be handy to define the largest such coefficient to be
\begin{equation}\label{suph}
h_*:=\max_{T} |h_T|
\end{equation}
When analyzing the low weight spectral concentration of $F$, a better measure for estimating $\hat{F}(S)$ than simply $|S|$, will be the number of vertices incident to edges in $S$.  We call this set of vertices the support of $S$.
\begin{define}
 Given a set of edges $S$, define  $\supp(S):=\cup_{e\in S}e$, the set of all vertices incident to edges in $S$.
\end{define}

Our main theorems in the next section will be aimed at bounds on the characteristics of subgraph counting random variables.  However, our arguments will work in the slightly more general setting of graph statistics which are \emph{edge dominated}.
\begin{define}
If $f$ has the property that $h_e=\sum_{e\in {[k]\choose 2}} \hat f(e)\neq 0$, then we say that $F$ is \emph{edge dominated}.
\end{define}
 A few examples to illustrate these definitions are in order.
\begin{example}
Consider $|\Gamma|=3$, and so $f:2^{[3]\choose 2}\to\mathbb{R}$ defined by 
$$f(\x)=\x_{12}\x_{23}=\left(\sqrt{p(1-p)}\chi_{(12)}(\x)+p\right)\left(\sqrt{p(1-p)}\chi_{(23)}(\x)+p\right)$$
Then $f$ is the indicator of whether the input graph $\Gamma$ contains the length 2 path from 1 to 3, but puts no condition on the edge
between vertices 2 and 3.  $F_f$ will count all ordered paths of length 2 in the graph and will be edge dominated for any $p$ (as can be seen by expanding out the above product).
\end{example}

\begin{example}
Again take $|\Gamma|=3$, and so $f:2^{[3]\choose 2}\to\mathbb{R}$ defined by 
$$f(\x)=\x_{12}\x_{23}(1-x_{13})=\left(\sqrt{p(1-p)}\chi_{(12)}(\x)+p\right)\left(\sqrt{p(1-p)}\chi_{(23)}(\x)+p\right)\left(\sqrt{p(1-p)}\chi_{(13)}(\x)+p-1\right)$$
Then $f$ is the indicator of whether the input graph $\Gamma$ is exactly the length 2 path from 1 to 3, with edge (23) excluded.  $F_f$ will count all induced copies of $P_2$ in the graph.   We can compute $h_e$ to be
$$h_e=2\left(p(p-1)\sqrt{p(1-p)}\right)+\sqrt{p(1-p)}p^2=p^{3/2}(1-p)^{1/2}\left(3p-2\right)$$
So $h_e\neq 0$ and $F$ is edge dominated so long as $p\neq \frac{2}{3}$.  Note this condition is quite logical, as $\frac23$ is the edge density of $P_2$, and intuitively it is at this point that observing an edge in our random graph gives us the least
information about how many induced copies of $P_2$ we should expect.
\end{example}

In general, these above examples extend to the case of all homomorphic or induced subgraph counts.  In particular, if $f$ checks for noninduced copies of a fixed grah $H$, then $F_f$ will always be edge dominated and obey the characteristic function bounds of Theorems \ref{subgraphmain1}, \ref{subgraphmain2}, and \ref{subgraphmain3} and the Central Limit Theorem of Theorem \ref{subgraphCLT}.  Meanwhile if $f$ counts induced copies of $H$, then $F_f$ will still be edge dominated \emph{so long as} $p\neq \frac{|E(H)|}{{k\choose 2}}$, that is $p$
is not exactly the edge density of $H$.

\subsection{Characteristic Function Bounds for Subgraph Counts and an Application}\label{subgraphmains}

Our first main result will be showing that the characteristic function of a function/random variable defined by applying an edge dominated graph statistic $F$ to $G(n,p)$ is close to that of the Gaussian. 
%

\begin{thm}\label{subgraphmain1}
Let $F:{[n] \choose 2}\to \mathbb{R}$ be an edge dominated graph statistic defined from $f:{[k]\choose 2}\to \mathbb{R}$ be as in definition \ref{graphstat}.  Let $Z:=\frac{F-\E F}{\sigma}$, then
$$\left|\varphi_Z(t)-e^{-\frac{t^2}{2}}\right|=O\left(\frac{t^3e^{-\frac{t^2}{3}}}{n}+\frac{t}{\sqrt{n}}\right)$$
\end{thm}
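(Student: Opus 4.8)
The plan is to mimic the proof of Lemma~\ref{smalltbound} verbatim, replacing the triangle-specific computations with the general graph-statistic bookkeeping set up in Section~\ref{subgraphsetup}. First I would write $Z = X + Y$ where $X := \sum_{e \in \binom{[n]}{2}} Q\,\chi_e$ with $Q := \sigma^{-1}\sqrt{\tfrac{1}{\binom{n}{2}}}\cdot(\text{the dominant }\hat F\text{ mass})$ — more precisely, I would set $Q$ to be the ``rounded-off'' value making $X$ a genuine sum of i.i.d.\ mean-$0$ variance-$(1/\binom{n}{2})$ terms — and $Y := \sum_e (\hat Z(e) - Q)\chi_e + \sum_{|S| \ge 2} \hat Z(S)\chi_S$. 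The point of edge-dominatedness ($h_e \ne 0$) is exactly that it guarantees $\sigma^2$ is dominated by the weight-one part: I would first establish, using Definition~\ref{hnotation} and the formula for $\hat F(S)$ (which must be recorded just before this theorem in the full paper, analogous to \eqref{FourierTransform}), that $\hat F(e) = \Theta(n^{k-2} h_e)$ for singletons, that $\hat F(S) = O(n^{k - |\supp(S)|} h_*)$ in general, and hence that $\sigma^2 = \Theta(n^{2k-2})$ with the $W^1 := \sum_e \hat F(e)^2$ part accounting for all but an $O(1/n)$ fraction.

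Next I would handle $X$ and $Y$ exactly as in Section~\ref{smallt}. For $X$: it is a sum of $\binom{n}{2}$ i.i.d.\ terms each of size $\Theta(1/n)$, so Berry--Esseen (Petrov, Chapter~V, Lemma~1) gives $L_n = \binom{n}{2}\E[|Q\chi_e|^3] = \Theta_p(1/n)$ and therefore $|\E[e^{itX}] - e^{-t^2/2}| \le 16 L_n |t|^3 e^{-t^2/3}$ for $|t| \le \tfrac{1}{4L_n} = \Theta_p(n)$. For $Y$: by Cauchy--Schwarz and Parseval, $\E|Y| \le \mathrm{var}(Y)^{1/2} = \big(\sum_e (\hat Z(e)-Q)^2 + \sum_{|S|\ge 2}\hat Z(S)^2\big)^{1/2}$. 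The second sum is $O(1/n)$ because every set $S$ with $|\supp(S)| \ge 3$ contributes $\hat F(S)^2 = O(n^{2k-6} h_*^2)$ and there are $O(n^{|\supp(S)|})$ of them, so after dividing by $\sigma^2 = \Theta(n^{2k-2})$ we lose a factor $n^{-2}$ times the count $O(n^3)$ at worst, i.e.\ $O(n^{-1})$ total — this is where I must be slightly careful to check no intermediate ``$|\supp(S)|$'' regime beats $O(1/n)$, but the worst case is always three vertices. The first sum is $O(1/n)$ by the identity $\hat Z(e) - Q = \frac{\hat Z(e)^2 - 1/\binom n2}{\hat Z(e)+Q}$ together with $\hat Z(e)^2 - 1/\binom n2 = O(n^{-3})$ and $\hat Z(e)+Q = \Theta(1/n)$, giving $|\hat Z(e)-Q| = O(n^{-2})$ per edge and $O(n^{-1})$ after squaring and summing over $\binom n2$ edges.

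Finally I would combine: for $|t| \le \tfrac{1}{4L_n}$,
\begin{align*}
\left|\varphi_Z(t) - e^{-t^2/2}\right| &\le \left|\E[e^{itX}] - e^{-t^2/2}\right| + \left|\E[e^{it(X+Y)}] - \E[e^{itX}]\right| \\
&\le 16 L_n |t|^3 e^{-t^2/3} + \E|tY| = O\!\left(\frac{t^3 e^{-t^2/3}}{n} + \frac{t}{\sqrt n}\right),
\end{align*}
the middle step being the mean value theorem applied to $x \mapsto e^{itx}$ (so $|e^{it(X+Y)} - e^{itX}| \le |tY|$), and for $|t| > \tfrac{1}{4L_n} = \Theta_p(n)$ the bound $t/\sqrt n \gtrsim \sqrt n$ is trivially larger than $2$, so the estimate holds vacuously. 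The main obstacle — really the only non-routine part — is bookkeeping the orders of magnitude of $\hat F(S)$ in terms of $|\supp(S)|$ and the coefficients $h_T$, and in particular pinning down that edge-dominatedness forces $W^1 = (1 - O(1/n))\sigma^2$; once that structural fact is in hand, every analytic step is identical to the triangle case. I would also note in passing that $h_*$ and $k$ are absorbed into the implied constant, which is legitimate since $\Gamma$ (hence $k$ and $f$) is fixed while $n \to \infty$.
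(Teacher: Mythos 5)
Your proposal matches the paper's proof essentially step for step: the same decomposition $Z=X+Y$ with $X=\sum_e Q\chi_e$, $Q=\sqrt{1/\binom n2}$, the same Berry--Esseen bound via $L_n=\Theta_p(1/n)$, the same two-part estimate of $\mathrm{var}(Y)$ (via $\hat Z(e)-Q = (\hat Z(e)^2-1/\binom n2)/(\hat Z(e)+Q)$ and the $|\supp(S)|\ge 3$ counting, with worst case at three vertices), and the same mean-value-theorem combination. Your observation that the estimate is vacuous for $|t|>\tfrac{1}{4L_n}$ is a small but correct addition that the paper leaves implicit, and your bookkeeping $\hat F(S)=\Theta(n^{k-|\supp(S)|}h_S)$ is exactly equation~\eqref{subgraphfouriercoeff} in the text.
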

This result is always true, but useless for $t>>\sqrt{n}$.  To address the situation as $t$ grows larger we prove the following result.
\begin{thm}\label{subgraphmain2}
Fix $\epsilon>0$.  For $n^{\epsilon}<t\le n^{\frac12 +\frac \epsilon 4}$
$$|\varphi_Z(t)|\le O\left(\frac{1}{\sqrt{n}t^{1-\epsilon}}\right)$$
\end{thm}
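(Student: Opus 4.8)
The plan is to mimic the mid-range argument for triangles (Lemmas \ref{midtbound}, and Claims \ref{GoodEvent}, \ref{GoodBound}) in the general edge-dominated setting. First I would fix a natural number $k^*$ (to be chosen) and let $H$ be a $k^*$-regular bipartite graph on the vertex set $[n]$; reveal the edges outside $H$ as a random $\beta\in\{0,1\}^{H^c}$ and write $\varphi_Z(t)=\E_\beta\E_\alpha[e^{itZ_\beta(\alpha)}]$. Using the restriction formula (the analogue of equation \eqref{Coefficients}) one has $\widehat{Z_\beta}(S)=\sum_{T\subset H^c}\chi_T(\beta)\hat Z(S\cup T)$. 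As in the triangle case, the bulk is carried by the weight-one part $X:=\sum_{e\in H}\widehat{Z_\beta}(e)\chi_e$ and the rest $Y:=\sum_{|S|\ge 2,\,S\subset H}\widehat{Z_\beta}(S)\chi_S$ is an error term. Edge-dominatedness guarantees $h_e\ne 0$, so the unrestricted weight-one Fourier coefficients of $F$ are $\hat F(e)=\Theta_f(n^{k-2})$ and $\sigma=\Theta_f(n^{k-1})$, hence each $\hat Z(e)=\Theta_f(1/n)$; a hypercontractivity argument (Theorem \ref{Hypercontractivity} applied to the degree-$\binom{k}{2}$ polynomial $\beta\mapsto\widehat{Z_\beta}(e)$, exactly as in Claim \ref{GoodEvent}) shows that with probability $1-o(1)$, for every $e\in H$ the restricted coefficient $\widehat{Z_\beta}(e)$ is within a small polynomial-in-$n$ error of $\hat Z(e)$, so on this good event $A$ we still have $\widehat{Z_\beta}(e)=\Theta_f(1/n)$.

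Next, on the event $A$ I would bound $|\E_\alpha[e^{itZ_\beta(\alpha)}]|$ by splitting $Z_\beta=X+Y$ and using the mean value theorem, $|\E_\alpha[e^{itZ_\beta}]|\le|\E_\alpha[e^{itX}]|+|t|\,\E_\alpha|Y|$. For $X$: it is a sum of independent terms $\widehat{Z_\beta}(e)\chi_e$ with each coefficient of order $1/n$, so Lemma \ref{Bernoulli} gives $|\E[e^{it\widehat{Z_\beta}(e)\chi_e}]|\le \exp(-c t^2 n^{-2})$ for $|t|$ up to order $n$, and multiplying over the $\Theta(k^*n)$ edges of $H$ yields $|\E_\alpha[e^{itX}]|\le \exp(-c\,k^* t^2/n)$. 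For $Y$: by Cauchy--Schwarz $\E_\alpha|Y|^2=\sum_{|S|\ge2,\,S\subset H}\widehat{Z_\beta}(S)^2$; the key point is that a fixed edge set $S\subset H$ of support $j$ can be completed to a nonzero-coefficient set $S\cup T$ in only $O(n^{k-j})$ ways, and each such $\hat Z(S\cup T)$ is $O(h_*/\sigma)$, so the contribution of edge pairs already forces $\supp(S)\le 4$ vertices but both endpoints lie in the $k^*$-regular $H$, giving $\E_\alpha|Y|^2=O(n (k^*)^2/\sigma^2)=O((k^*)^2/n^{2k-3})$ and hence $\E_\alpha|Y|=O(k^*/(\sqrt n\,\sigma))$. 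Combining, on $A$:
\[
|\E_\alpha[e^{itZ_\beta(\alpha)}]|\le \exp\!\left(-\frac{c\,k^* t^2}{n}\right)+\frac{C k^* |t|}{\sqrt n\,\sigma}.
\]

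Finally I would assemble the pieces: $|\varphi_Z(t)|\le \Pr(A^c)+\sup_{\beta\in A}|\E_\alpha[e^{itZ_\beta}]|$, where $\Pr(A^c)$ is super-polynomially small by the hypercontractive bound. Choosing $k^*$ to be an integer of size $\Theta(n\,t^{-2+\epsilon})$ — which is a legitimate choice ($1\le k^*\le n$) precisely in the range $n^\epsilon<t\le n^{1/2+\epsilon/4}$ — makes the first term $\exp(-c\,t^{\epsilon})$, negligible, and makes the second term $O(k^*|t|/(\sqrt n\,\sigma))=O(t^{\epsilon-1}n^{1/2}/\sigma)=O(1/(\sqrt n\, t^{1-\epsilon}))$ after using $\sigma=\Theta(n^{k-1})$ and $n\le\sigma$ for $k\ge 2$; tidying the exponents gives the claimed $O\!\big(1/(\sqrt n\,t^{1-\epsilon})\big)$. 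The main obstacle is the bookkeeping in the $Y$ estimate: I must verify the general counting bound that an edge set of support $j$ lying in $H$ extends to only polynomially-few nonzero Fourier sets and that the $k^*$-regularity of $H$ genuinely gains a factor $(k^*/n)^2$ over $X$ (rather than $k^*/n$), since this asymmetry is exactly what powers the improved error term; getting the dependence on $k$ (the size of $\Gamma$) and on $h_*$ uniform, and confirming that edge-dominatedness is all that is needed (no induced/noninduced distinction), is the part that requires care.
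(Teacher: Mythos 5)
Your proposal follows the paper's own route almost exactly: reveal the edges outside a sparse subgraph $H$, split the restricted function $Z_\beta$ into a weight-one part $X$ and a remainder $Y$, bound $|\E_\alpha[e^{itX}]|$ via Lemma \ref{Bernoulli}, bound $\E_\alpha|Y|$ via Cauchy--Schwarz, control the restricted Fourier coefficients by hypercontractivity, and then optimize the size of $H$ against $t$. The one cosmetic difference is your choice of $H$: you take a $k^*$-regular bipartite graph (as in the triangle-specific Section~\ref{midt}), while the paper uses a disjoint union of cliques of size $\ell=\lfloor\alpha n\rfloor$ (Claim \ref{goodgraph}). Either works: the only thing the argument needs from $H$ is that it has $\Theta(\alpha n^2)$ edges while $\sum_{S\subset H,\,|S|\ge 2}n^{2k-2|\supp(S)|}=O(\alpha^2 n^{2k-3})$, and for a $k^*$-regular graph with $k^*\approx\alpha n$ the dominant $|\supp(S)|=3$ term (paths of length 2) indeed contributes $O(n(k^*)^2\cdot n^{2k-6})=O(\alpha^2 n^{2k-3})$.

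The genuine gap is in your treatment of $Y$. You write $\E_\alpha|Y|^2=\sum_{|S|\ge2,\,S\subset H}\widehat{Z_\beta}(S)^2$ and then bound each $\widehat{Z_\beta}(S)$ by counting extensions $T$ and using $|\hat Z(S\cup T)|=O(h_*n^{k-|\supp(S\cup T)|}/\sigma)$. But $\widehat{Z_\beta}(S)=\sum_T\hat Z(S\cup T)\chi_T(\beta)$ is a \emph{random polynomial in $\beta$}, and the factors $\chi_T(\beta)$ can be as large as $(p(1-p))^{-|T|/2}$, so a naive triangle-inequality bound on the sum over $T$ does not give the claimed pointwise control. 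You only set up the good event $A$ (controlling the weight-one $\widehat{Z_\beta}(e)$); you still need either (a) a second high-probability event $B$ controlling every $\widehat{Z_\beta}(S)$ with $|S|\ge2$ pointwise (this is exactly what the paper's Claim \ref{goodtimes} does, by observing $\widehat{Z_\beta}(S)$ is a low-degree polynomial in $\beta$ and applying Theorem~\ref{Hypercontractivity} plus a union bound), or (b) a cleaner alternative you could use: replace $\E_\alpha|Y_\beta|^2$ by the unconditional second moment $\E_{\alpha,\beta}|Y_\beta|^2=\sum_{S}\sum_T\hat Z(S\cup T)^2$ (by orthogonality over $\beta$), which removes the need to bound $\widehat{Z_\beta}(S)$ pointwise and yields the same final estimate; but you never actually pass to this unconditional expectation. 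Also, your intermediate bookkeeping for $Y$ is inconsistent -- $O(n(k^*)^2/\sigma^2)$ only coincides with the correct $O((k^*)^2\,\alpha^2/n)$-type estimate when $k=3$, and $\E_\alpha|Y|=O(k^*/(\sqrt n\,\sigma))$ does not follow from the preceding line -- though the exponent you land on after optimizing $k^*\approx nt^{-2+\epsilon}$ is the right one.
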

Lastly we have one more case which covers yet more values of $t$.
\begin{thm}\label{subgraphmain3}
Fix $\epsilon>0$.  For $n^{\frac12+\epsilon}\le t\le n^{1-\epsilon}$ we have that
$$|\varphi_Z(t)|\le O\left(\frac{1}{tn^{1-\epsilon}}\right)$$
\end{thm}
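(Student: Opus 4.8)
The plan is to adapt the reveal-a-regular-subgraph technique of Claims \ref{GoodEvent} and \ref{GoodBound}, but to push the fraction of revealed edges further so as to handle the larger range $n^{1/2+\epsilon}\le t\le n^{1-\epsilon}$. As before, write $\E[e^{itZ}]=\E_{\beta\in\{0,1\}^{H^c}}\E_{\alpha\in\{0,1\}^H}[e^{itZ_\beta(\alpha)}]$ where $H$ is a $k$-regular bipartite graph on $n$ vertices and $k$ is a parameter to be optimized. The key structural input — the analogue of equation \ref{Coefficients} — is that $\widehat{Z_\beta}(e)=\sum_{T\subset H^c}\chi_T(\beta)\hat Z(e\cup T)$, and that since $F$ is edge dominated, $\hat Z(e)=\Theta(h_e/\sigma)=\Theta(1/n)$ (using $\sigma=\Theta(n^{k-1})$ and the spectral concentration of $F$ on weight-one coefficients established in the earlier section on graph statistics). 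The sets $T$ with $\hat Z(e\cup T)\neq 0$ are those completing $e\cup T$ to a subgraph supported on $\le k$ vertices; there are $O(n^{k-3})$ such $T$ each of size $\le \binom k2 -1$, and each coefficient satisfies $|\hat Z(e\cup T)|=O(h_*/\sigma)$.

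First I would re-run the concentration argument of Claim \ref{GoodEvent}: $\widehat{Z_\beta}(e)$, viewed as a polynomial in $\beta$, has degree at most $\binom k2$ and variance $\sum_{T\neq\varnothing}\hat Z(e\cup T)^2=O(n^{k-3}/\sigma^2)=O(n^{-k-1})$, so Theorem \ref{Hypercontractivity} gives, after a union bound over the $O(n)$ edges of $H$, a good event $A$ of probability $1-e^{-\Omega(n^c)}$ on which $|\widehat{Z_\beta}(e)-\hat Z(e)|$ is negligible compared to $\hat Z(e)=\Theta(1/n)$ for all $e\in H$. Next, conditioned on $\beta\in A$, I would split $Z_\beta=X+Y$ with $X=\sum_{e\in H}\widehat{Z_\beta}(e)\chi_e$ and $Y$ the rest; then $\widehat{Z_\beta}(e)=\Theta(1/n)$, so Lemma \ref{Bernoulli} (valid once $t=O(n)$, which holds since $t\le n^{1-\epsilon}$) yields $|\E[e^{itX}]|\le \exp(-\Omega(kt^2/n^2))$, while Cauchy–Schwarz bounds $\E|Y|$ by $\mathrm{var}(Y)^{1/2}$, and $\mathrm{var}(Y)=\sum_{|S|\ge2,\,S\subset H}\widehat{Z_\beta}(S)^2$. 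The variance of $Y$ is where the regularity of $H$ buys us a factor: a set $S\subset H$ of support size $j$ contributes only if it extends to a subgraph on $\le k$ vertices, and $k$-regularity of $H$ forces the number of such $S$ to carry an extra $(k/n)$-type saving per vertex, giving $\mathrm{var}(Y)=O(k^2/(n^2\sigma^2)\cdot n^{k-1})$ or so; the upshot is a bound of the shape $|\E_\alpha[e^{itZ_\beta(\alpha)}]|\le \exp(-\Omega(kt^2/n^2))+O(|t|k/(n\sigma)\cdot n^{(k-1)/2})$, i.e.\ schematically $\exp(-ckt^2/n^2)+O(|t|k/n^{3/2})$ after normalizing.

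Then I would optimize: choose $k$ so that the Gaussian-type term is already $O(1/(tn^{1-\epsilon}))$ while the error term stays controlled. Taking $k=\Theta(n^2 t^{-2} \log(t n))$ makes $\exp(-ckt^2/n^2)\le (tn)^{-C}$ for any constant $C$; this requires $k\ge 1$, i.e.\ $t=O(n)$, and $k\le n$, i.e.\ $t\ge \sqrt{\log n}$ roughly — comfortably inside $[n^{1/2+\epsilon},n^{1-\epsilon}]$ — and one checks $k$ can be rounded to an integer regularity. With this $k$ the error term becomes $O(|t|\cdot n^2 t^{-2}\log(tn)/n^{3/2})=O(n^{1/2+o(1)}/t)$, and since $t\ge n^{1/2+\epsilon}$ this is $O(n^{-\epsilon/2})$; to get the sharper stated bound $O(1/(tn^{1-\epsilon}))$ one instead picks $k$ slightly smaller, trading a larger (but still super-polynomially small) Gaussian term for an error term of order $|t|k/n^{3/2}$ with $k$ chosen as the largest integer with $kt^2/n^2\ge (2-\epsilon)\log n$, giving error $O(\log n / (t n^{1-\epsilon}))$, which is absorbed into $O(1/(tn^{1-\epsilon}))$ after shrinking $\epsilon$. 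Finally I would assemble $|\varphi_Z(t)|\le \Pr(A^c)+\E_{\beta\in A}|\E_\alpha[e^{itZ_\beta}]|$ and note $\Pr(A^c)=e^{-\Omega(n^c)}$ is dominated.

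The main obstacle I anticipate is bookkeeping the dependence of all constants on $k$ (the fixed subgraph size) simultaneously with the growing parameter also named $k$ in the reveal step — the paper overloads notation — and, more substantively, verifying that the $k$-regularity of $H$ genuinely produces the claimed per-vertex saving in $\mathrm{var}(Y)$ for an \emph{arbitrary} edge-dominated $f$ rather than just for triangles; this requires a careful count, for each support size $j\in\{3,\dots,k\}$, of edge sets $S\subset H$ with $|\supp(S)|=j$ that can be completed within the subgraph $\Gamma$, using that each vertex of $H$ has degree exactly $k$. Once that combinatorial estimate is in hand the rest is a direct transcription of Section \ref{midt}.
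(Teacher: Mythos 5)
The reveal-and-decompose strategy you outline is the right one, and your concentration step via hypercontractivity (Claim \ref{GoodEvent}) carries over verbatim; but the choice of $H$ as a $k$-regular graph cannot reach the range $t\in[n^{1/2+\epsilon},n^{1-\epsilon}]$, and that is exactly the obstacle you flag at the end. In a $k$-regular $H$ the dominant contribution to $\mathrm{var}(Y)$ comes from support-$3$ sets, i.e.\ pairs of \emph{incident} edges of $H$: there are $\Theta(nk^2)$ of them, each contributing $\Theta(n^{2m-6}/\sigma^2)=\Theta(n^{-4})$ (writing $m$ for the fixed subgraph size to avoid the overloaded $k$), so $\mathrm{var}(Y)=\Theta(k^2/n^3)$ and $\E|Y|=\Theta(k/n^{3/2})$, exactly as you compute. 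Since $t\ge n^{1/2+\epsilon}$ already makes the Gaussian factor $\exp(-\Theta(kt^2/n))$ superpolynomially small at $k=1$ (note your exponent $kt^2/n^2$ is missing a factor of $n$; a $k$-regular bipartite $H$ on $n$ vertices has $nk/2$ edges), the optimal regularity is $k=1$, and the error term is then $\Theta(|t|/n^{3/2})$. But the target $O(1/(tn^{1-\epsilon}))$ is at most $n^{-3/2}$ throughout this range of $t$, so $|t|/n^{3/2}$ overshoots it by a factor of at least $t\ge n^{1/2+\epsilon}$. No admissible integer regularity $k\ge 1$ closes this gap, and no refinement of constants will help: the support-$3$ terms are a structural obstruction.

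The missing idea is to take $H$ to be a \emph{matching} of size $\ell$ rather than a regular graph. A matching has no incident pairs at all, so every $S\subset H$ with $|S|\ge 2$ has $|\supp(S)|=2|S|\ge 4$. The leading term in $\mathrm{var}(Y_\beta)$ is then from the $\binom{\ell}{2}$ pairs of disjoint edges, each of size $O(n^{2m-8}/\sigma^2)=O(n^{-6})$, giving $\mathrm{var}(Y_\beta)=O(\ell^2/n^6)$ and hence $\E|Y_\beta|=O(\ell/n^3)$ --- a saving of order $n^{3/2}/\ell^{1/2}$ over the regular-graph bound at the same number of revealed edges. One then chooses $\ell=\lfloor n^{2+\epsilon}/t^2\rfloor$, which lies in $[1,n/2]$ precisely when $t\in[n^{1/2+\epsilon},n^{1-\epsilon}]$, so such a matching exists. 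The Gaussian factor is $\exp(-\Theta(\ell t^2/n^2))=\exp(-\Theta(n^\epsilon))$, negligible, and the error becomes $|t|\E|Y_\beta|=O(t\ell/n^3)=O(n^{\epsilon-1}/t)=O(1/(tn^{1-\epsilon}))$, which is the claimed bound. Your regular-graph version instead gives $O(1/(\sqrt n\,t^{1-\epsilon}))$, which is what Theorem \ref{subgraphmain2} proves for the mid range $t\le n^{1/2+\epsilon/4}$; the matching is what extends the argument past $t\approx\sqrt n$.
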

We then show an application of all of these characteristic function bounds in the form of a quantitative central limit theorem for subgraph counts by the use of the Esseen Smoothing Lemma.  We restate an appropriate version
of the smoothing result (Lemma 2 of Chapter 16 in Feller \cite{Feller2} following a result of A.C. Berry).
\begin{lem}\label{smoothing}
Assume $Z$ has $\mathbb{E}[Z]=0$ and characteristic function $\varphi_Z(t)$.  Then if we let $\N(x):=\frac{1}{\sqrt{2\pi}}e^{-x^2/2}$, the density of the normal, and $\varphi:=e^{-t^2/2}$ be the characteristic function of the normal.  Finally let $\mathcal{Z}$ be the cumulative distribution function of $Z$ and $\mathfrak{N}$ the c.d.f. of the standard unit normal.  Then for any $x$ and $T>0$
\begin{align}
\left|\mathcal{Z}(x)-\mathfrak{N}(x)\right|\le \frac{1}{\pi}\int_{-T}^T\left|\frac{\varphi_Z(t)-\varphi(t)}{t}\right|dt+\frac{24}{\pi \sqrt{2\pi}T}
\end{align}
\end{lem}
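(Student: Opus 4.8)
The statement to be proved is the classical Esseen smoothing inequality, and the plan is to establish it in the standard way, using nothing about $\mathfrak{N}$ beyond the fact that it is nondecreasing with a uniformly bounded derivative. Write $H(x):=\mathcal{Z}(x)-\mathfrak{N}(x)$ and $\eta:=\sup_x|H(x)|$; since $H(\pm\infty)=0$ and $H$ is bounded, the goal is to bound $\eta$. The strategy is to convolve $H$ with a fixed probability density $p_T$ whose Fourier--Stieltjes transform is supported in $[-T,T]$, and then estimate the smoothed function $\Delta_T:=H\ast p_T$ in two ways: from above for every argument via Fourier inversion (this produces the integral term), and from below at one carefully chosen argument via a monotonicity argument (this produces the $T^{-1}$ term).

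For the kernel I would take the Fej\'er-type density $p_T(x):=\frac{1-\cos(Tx)}{\pi T x^2}$, which satisfies $\int p_T=1$, has the tail estimate $\int_{|y|>a}p_T(y)\,dy=O(1/(Ta))$, and has Fourier--Stieltjes transform equal to the tent function $\hat p_T(t)=(1-|t|/T)_+$ supported in $[-T,T]$. Since $H\to 0$ at $\pm\infty$, an integration by parts identifies its Fourier transform as $\widehat H(t)=(\varphi(t)-\varphi_Z(t))/(it)$ (in the genuine sense when $H$ is integrable, and in the distributional sense otherwise), so $\widehat{\Delta_T}(t)=\widehat H(t)\,\hat p_T(t)$ is supported in $[-T,T]$ and Fourier inversion gives, for every $x$,
\[
|\Delta_T(x)|\le \frac{1}{2\pi}\int_{-T}^{T}\left|\frac{\varphi_Z(t)-\varphi(t)}{t}\right|dt .
\]

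For the lower bound I would pick a point $x_0$ at which $|H|$ comes within $\varepsilon$ of $\eta$; after replacing $x$ by $-x$ and interchanging the two c.d.f.'s if necessary, one may assume $H$ is close to $+\eta$ near $x_0$, possibly only via an upward jump of $\mathcal{Z}$. Because $\mathcal{Z}$ is nondecreasing and $\mathfrak{N}$ is Lipschitz with constant $m:=\sup_x\mathfrak{N}'(x)=\N(0)=1/\sqrt{2\pi}$, the function $H$ stays comfortably positive on a one-sided interval of length of order $\eta/m$ adjacent to $x_0$. Evaluating $\Delta_T$ at a suitable point $\xi$ of that interval and splitting the convolution integral into the part whose argument lands in the good interval (a definite positive contribution, at least $\eta/2$ up to lower-order terms) and the complementary part (bounded below by $-\eta$ times the kernel tail mass, hence $O(m/T)$) yields $\Delta_T(\xi)\ge \tfrac{\eta}{2}-O(m/T)$. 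Comparing this with the Fourier upper bound gives $\eta\le \frac{1}{\pi}\int_{-T}^{T}|(\varphi_Z(t)-\varphi(t))/t|\,dt+\frac{Cm}{T}$ for a universal constant $C$; optimizing the length of the good interval against the kernel's tail, as in Feller, tightens $C$ so that the error term becomes $\frac{24m}{\pi T}=\frac{24}{\pi\sqrt{2\pi}\,T}$, and letting $\varepsilon\to 0$ finishes the proof. I expect the only real obstacle to be bookkeeping: the monotonicity estimate is genuinely one-sided and $\mathcal{Z}$ may have jumps, so some care is needed in the choice of $\xi$ and in the tail optimization that yields the precise constant $24$ rather than a cruder one.
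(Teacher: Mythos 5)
The paper does not supply a proof of this lemma at all --- it states it as a direct citation of Lemma~2 of Chapter~XVI of Feller. Your sketch is a correct reproduction of the standard Esseen smoothing argument found there (Fej\'er kernel $p_T$ with triangular Fourier transform supported in $[-T,T]$, Fourier inversion of $H\ast p_T$ for the upper bound, and the one-sided monotonicity/Lipschitz estimate for the lower bound, with the constant $24$ coming from Feller's tail optimization), so you have in effect rederived the cited result rather than found a different route.
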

We can now easily obtain the following quantitative central limit theorem for subgraph count like functions.
\begin{thm} \label{subgraphCLT}
Assume $F:{[n] \choose 2}\to \mathbb{R}$,  a graph statistic defined from $f:{[k]\choose 2}\to \mathbb{R}$, is edge dominated and $Z=\frac{F-\mu}{\sigma}$.   Then we have that for any $a<b$ fixed constants and $\epsilon>0$
$$\Pr(Z\in (a,b))=\frac{1}{\sqrt{2\pi}}\int_a^b e^{-x^2/2}dx+O_\epsilon\left(\frac{1}{n^{\frac12-\epsilon}}\right)$$
\end{thm}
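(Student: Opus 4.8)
The plan is to derive Theorem~\ref{subgraphCLT} from the Esseen smoothing inequality of Lemma~\ref{smoothing}, fed with the three characteristic function estimates of Theorems~\ref{subgraphmain1}, \ref{subgraphmain2} and~\ref{subgraphmain3}. Fix $\delta:=\epsilon/4$ and apply Lemma~\ref{smoothing} with $T:=n^{1-\delta}$; this gives, for every $x$,
$$\left|\mathcal{Z}(x)-\mathfrak{N}(x)\right|\le \frac1\pi\int_{-T}^{T}\left|\frac{\varphi_Z(t)-\varphi(t)}{t}\right|dt+\frac{24}{\pi\sqrt{2\pi}\,T},$$
where the last term is $O(n^{-1+\delta})$, negligible against the target $n^{-1/2+\epsilon}$. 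Everything then reduces to bounding the integral, which I would do by splitting $[-T,T]$ into three ranges matched to the three theorems.

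On the innermost range $|t|\le n^{4\delta}$ I would invoke Theorem~\ref{subgraphmain1}: dividing the bound $|\varphi_Z(t)-e^{-t^2/2}|=O(t^3e^{-t^2/3}/n+t/\sqrt n)$ by $|t|$ and integrating, the first term contributes $O(1/n)$ even over all of $\mathbb{R}$, while the second contributes $O(n^{4\delta}/\sqrt n)=O(n^{-1/2+\epsilon})$; this second term is the dominant contribution to the whole bound. On $n^{4\delta}<|t|\le T$ the Gaussian piece $e^{-t^2/2}/|t|$ integrates to something negligible, so it suffices to bound $\int|\varphi_Z(t)|/|t|\,dt$; here I would apply Theorem~\ref{subgraphmain2} with its $\epsilon$-parameter set to $4\delta$ on the interval $(n^{4\delta},n^{1/2+\delta}]$, where $\int|\varphi_Z(t)|/|t|\,dt=O\!\left(n^{-1/2}\int_{n^{4\delta}}^{\infty}t^{-2+4\delta}\,dt\right)=O(n^{-1/2})$, and Theorem~\ref{subgraphmain3} with its parameter set to $\delta$ on $[n^{1/2+\delta},n^{1-\delta}]$, where the integral is $O(n^{-3/2})$. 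Choosing these parameters in this way is exactly what makes the two intervals abut at $n^{1/2+\delta}$, so that $[n^{4\delta},T]$ is covered with no gap. Adding the three pieces gives the Kolmogorov-distance bound $\sup_x|\mathcal{Z}(x)-\mathfrak{N}(x)|=O(n^{-1/2+\epsilon})$.

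To finish, I would write $\Pr(Z\in(a,b))$ as a difference of the c.d.f.\ of $Z$ evaluated at $b$ and at $a$, compare each value to $\mathfrak{N}$, and use $\mathfrak{N}(b)-\mathfrak{N}(a)=\frac1{\sqrt{2\pi}}\int_a^be^{-x^2/2}dx$; this incurs an error of at most $2\sup_x|\mathcal{Z}(x)-\mathfrak{N}(x)|=O(n^{-1/2+\epsilon})$ (the distinction between $(a,b)$ and $(a,b]$ costs nothing since $\mathfrak N$ is continuous), which is the claim. I do not expect a genuine obstacle in this step: all the analytic difficulty lives in Theorems~\ref{subgraphmain1}--\ref{subgraphmain3}, and the only things requiring care are (i) cutting the small-$t$ regime off at a power $n^{O(\epsilon)}$ rather than higher, since the $t/\sqrt n$ error term is precisely what forces the $n^{-1/2+\epsilon}$ loss, and (ii) tuning the $\epsilon$-parameters of Theorems~\ref{subgraphmain2} and~\ref{subgraphmain3} so that their ranges of validity overlap and jointly reach all the way up to $T=n^{1-\delta}$.
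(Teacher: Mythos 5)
Your proof is correct and follows essentially the same route as the paper's: apply the Esseen smoothing bound of Lemma~\ref{smoothing}, split the resulting integral into ranges matched to Theorems~\ref{subgraphmain1}--\ref{subgraphmain3}, and observe that the dominant contribution $O(n^{-1/2+\epsilon})$ comes from the $t/\sqrt{n}$ error term in the small-$t$ regime. The only (inessential) difference is your choice $T=n^{1-\epsilon/4}$, which forces you to invoke Theorem~\ref{subgraphmain3} on $[n^{1/2+\delta},T]$, whereas the paper simply takes $T=\sqrt{n}$ so that the smoothing term $24/(\pi\sqrt{2\pi}\,T)=O(n^{-1/2})$ is already within budget and Theorem~\ref{subgraphmain3} is never needed.
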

\begin{proof}
Fix $T=\sqrt{n}$.  For all $t\le \sqrt{n}$ we can apply either Theorem \ref{subgraphmain1} or \ref{subgraphmain2} to bound
\begin{align*}
\frac{1}{\pi}\int_{-T}^T\left|\frac{\varphi_Z(t)-\varphi(t)}{t}\right|dt&\le 2\int_{0}^{n^\epsilon} \frac{1}{t}O\left(\frac{t^3e^{-\frac{t^2}{3}}}{n}+\frac{t}{\sqrt{n}}\right)dt+2\int_{n^{\epsilon}}^{\sqrt{n}} \frac{1}{t}O\left(\frac{1}{\sqrt{n}t^{1-\epsilon}}\right)dt\\
&=O_\epsilon\left(\frac{1}{n^{\frac{1}{2}-\epsilon}}\right)
\end{align*}
The result now follows immediately from Lemma \ref{smoothing}
\end{proof}
\subsection{Properties of Graph Statistics}
In this subsection we compute the Fourier Coefficients, variance and spectral concentration of $F$, where $F$ is a graph statistic defined from $f$ as in Definition \ref{graphstat}.  
Fix a set $T\subset {[n]\choose 2}$.  Note that a map $\psi\in \mathcal{S}_G$ such that $\supp(T)\subset \psi(D)$ can be 
determined as follows:  Pick an injection $\phi:\supp(T)\hookrightarrow D$, and for $v\in \phi(\supp(T))$ set $\psi(v)=\phi^{-1}(v)$.  We can then extend $\psi$ to a map on all of $[k]$ it by specifying the image of $\psi$ on $\phi(T)^c$ arbitrarily.
An extension can be picked in $(n-|\supp(T)|)\downarrow_{k-|\supp(T)|}$ ways.  So we have that 
\begin{align}\label{subgraphfouriercoeff}
\begin{split}
\hat F(T)&=\sum_{\substack{\psi\in \mathcal{S}_G\\ \supp(T)\subset \psi(D)}} \hat f(\psi^{-1}(T))=\sum_{\phi:\supp(T)\hookrightarrow D}\sum_{\psi:[k]-\varphi(T)\hookrightarrow [n]} \hat f(\psi^{-1}(T))\\
&=(n-|\supp(T)|)\downarrow_{k-|\supp(T)|}\sum_{\varphi:\supp(T)\hookrightarrow D}\hat f(\psi^{-1}(T))\\
&=(n-|\supp(T)|)\downarrow_{k-|\supp(T)|}h_T
\end{split}
\end{align}
Furthermore, this shows us that $\hat F(T)=\Theta(n^{k-|\supp(T)|})$, so long as we have that $h_T\neq 0$.
It is of particular importance whether $h_e=0$ were $e$ is a set consisting of a single edge.  
Using these estimates and Parseval's \ref{parseval} we can compute the variance of $F$ to be
\begin{align}\label{subgraphvar}
\begin{split}
\sigma^2:=Var(F)&={n\choose 2}h_e^2\left((n-2)\downarrow_{k-2}\right)^2+\sum_{i=3}^k\left((n-i)\downarrow_{k-i}\right)^2\sum_{|\supp(T)|=i} h_T^2\\
&={n\choose 2}h_e^2n^{2k-2}+O\left(\sum_{i=3}^k\left((n-i)\downarrow_{k-i}\right)^2\left[{n\choose i}\max_{|\supp(T)=i|}h_T^2\right]\right)\\
&=\sum{n\choose 2}h_e^2n^{2k-2}+O(n^{2k-3})
\end{split}
\end{align}
So we see that if $h_e\neq 0$, that is $f$ is edge dominated, then $\sigma^2-W^1(F)=O(n^{2k-3})$.  In fact we have shown that more is true, and that for any $j\ge 1$ we have that $W^j(F)/\sigma^2 = O(n^{-j+1})$

\subsection{Small values of $t$}

The goal of this subsection is to prove Theorem \ref{subgraphmain1}, which we restate.
\begin{repthm}{subgraphmain1}
Let $F:{[n] \choose 2}\to \mathbb{R}$ be a graph statistic defined from $f:{[k]\choose 2}\to \mathbb{R}$ be as in Definition \ref{graphstat}.  Assume $F$ is edge dominated, that is $h_e=\sum_{e\in {[k]\choose 2}} \hat f(e)\neq 0$.   Let $Z:=\frac{F-\E F}{\sigma}$, then
$$\left|\varphi_Z(t)-e^{-\frac{t^2}{2}}\right|=O\left(\frac{t^3e^{-\frac{t^2}{3}}}{n}+\frac{t}{\sqrt{n}}\right)$$
\end{repthm}

%

%

 Theorem \ref{subgraphmain1} concerns $Z$, a normalized form of $F$ with mean 0 and variance 1.  
We can decompose $Z$ into two parts, as we did in the triangle case, the dominant weight one part $X$, and a smaller term $Y$ corresponding to Fourier coefficients of weight $\ge 2$.  Let $Q:=\sqrt{\frac{1}{{n \choose 2}}}$ and 
\begin{align*}
X:=\sum_{e\in {[n]\choose 2}} Q \chi_e&&Y:=\sum_{e\in {[n]\choose 2}} (\hat Z(e)-Q)\chi_e+\sum_{|S|\ge 2} \hat Z(S) \chi_S
\end{align*}
First we examine $X$.  It is the mean 0 variance 1 sum of independent  random variables, and so by Berry-Esseen (see Petrov \cite{Petrov}, Chapter V lemma 1) we know that if 
$$L_n:={n\choose 2}\E[|Q\chi_e|^3]=\frac{p^2+(1-p)^2}{\sqrt{{n\choose 2}p(1-p)}}=\Theta_p\left(1/n\right)$$
then for $t\le \frac{1}{4L_n}$ we have that
\begin{equation}\label{eqBerryEsseen}
\left|\E[e^{itX}]-e^{-\frac{t^2}{2}}\right| \le 16L_n|t|^3e^{\frac{-t^2}{3}}
\end{equation}
Next we examine $Y$.  It is best considered as an error term, and we will show that $\E|Y|$ is small.
We know from prior calculations \ref{subgraphfouriercoeff} and \ref{subgraphvar}
$$\sum_{|S|\ge 2} \hat Z^2(S)=O\left(\frac{kn^{2k-3}}{\sigma^2}\right)=O_k\left(\frac{1}{n}\right)$$
Further we can estimate
$${n\choose 2}\sigma^2\hat Z^2(e)-\sigma^2= {n\choose 2}\hat F^2(e)-\sigma^2=O(n^{2k-3})\implies \hat Z^2(e)-\frac{1}{{n\choose 2}}=O(n^{-3})$$
Therefore using the fact that $(x-y)=(x^2-y^2)/(x+y)$ coupled with the observation that $\hat Z(e)+Q=\Theta\left(\frac{1}{n}\right)$, we find that
$$|\hat Z(e)-Q|\le \left|\frac{ \hat Z^2(e)-\frac{1}{{n\choose 2}}}{\hat Z(e)+Q}\right|=O\left(\frac{1}{n^2}\right)$$
So as a result we can conclude that $var(Y)=O(1/n)$ and so $\E[|Y|]=O(\frac{1}{\sqrt n})$.  Now we are ready for our characteristic function bound for $Z$.  If $|t|\le \frac{1}{4L_n}=\Theta_p(n)$ then combining the above with equation \ref{eqBerryEsseen} yields.

\begin{align*}
\left|\varphi_Z(t)-e^{-\frac{t^2}{2}}\right|&=\left|\E\left[e^{itZ}\right]-e^{-\frac{t^2}{2}}\right|=\left|\E\left[e^{it(X+Y)}\right]-e^{-\frac{t^2}{2}}\right|\le \left|\E\left[e^{itX}\right]-e^{-\frac{t^2}{2}}\right|+\left|\E\left[e^{itX+Y}\right]-\E e^{itX}\right|\\
&\le 16L_n|t|^3e^{\frac{-t^2}{3}}+\E|tY|=O\left(\frac{t^3e^{-\frac{t^2}{3}}}{n}+\frac{t}{\sqrt{n}}\right)
\end{align*}
But this is exactly the statement of Theorem \ref{subgraphmain1}

\subsection{Bounds for slightly larger $t$}
The goal for this subsection is to prove
\begin{repthm}{subgraphmain2}
Fix $\epsilon>0$.  For $n^{\epsilon}<t\le n^{\frac12 +\frac \epsilon 4}$
$$|\varphi_Z(t)|\le O\left(\frac{1}{\sqrt{n}t^{-1+\epsilon}}\right)$$
\end{repthm}
To prove this statement we will first need the following claims:

\begin{claim}\label{goodgraph}
Fix $\epsilon>0$.  For all sufficiently large $n$, we have that for any $\alpha\in (n^{-1+\epsilon},1)$
there exists a set of edges $H\subset {[n] \choose 2}$  with $|H|\ge \frac{\alpha n}{2}$ such that
$$\sum_{\substack{S\in H\\|S|\ge 2}} n^{2k-2|\supp(S)|}\le C \alpha^2 n^{2k-3}$$
Where $C$ is a fixed constant depending only on $f$.
\end{claim}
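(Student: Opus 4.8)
The plan is a first-moment argument: produce $H$ by random sampling and show that a typical sample simultaneously has $\Omega(\alpha n)$ edges and carries very little higher-order Fourier weight. Recall from equation~\ref{subgraphfouriercoeff} that $\hat F(S)=\Theta(n^{k-|\supp(S)|})$ when the coefficient $h_S$ is nonzero, and that $\hat F(S)=0$ as soon as $|\supp(S)|>k$. Consequently $n^{2k-2|\supp(S)|}$ is, up to a constant depending only on $f$, an upper bound for $\hat F(S)^2$, and the sum appearing in the claim effectively runs over $S$ with $|S|\ge 2$ and $3\le|\supp(S)|\le k$.

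\textbf{Sampling $H$.} Put $q:=2\alpha/n$, which lies in $(0,1)$ for $n\ge 3$ since $\alpha<1$, and let $H\subseteq\binom{[n]}{2}$ contain each potential edge independently with probability $q$. Then $\E|H|=q\binom{n}{2}=\alpha(n-1)$, and because $\alpha>n^{-1+\epsilon}$ forces $\alpha n\ge n^{\epsilon}\to\infty$, a Chernoff bound yields $\Pr\!\left[\,|H|<\tfrac{\alpha n}{2}\,\right]=o(1)$.

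\textbf{Bounding the expected weight.} For fixed $i\in\{3,\dots,k\}$ and $m$, the number of edge sets $S$ with $|\supp(S)|=i$ and $|S|=m$ is at most $\binom{n}{i}\binom{\binom{i}{2}}{m}=O_f(n^{i})$ (choose the at most $k$ vertices, then $m$ of the edges among them), and $\Pr[S\subseteq H]=q^{m}$. From $2m=\sum_{v\in\supp(S)}\deg_S(v)\ge|\supp(S)|$ we get $m\ge i/2$, and $|S|\ge 2$ forces $i\ge 3$, so $i+m\ge 5$ throughout. Hence
\begin{align*}
\E\!\left[\sum_{\substack{S\subseteq H\\ |S|\ge 2}} n^{2k-2|\supp(S)|}\right]
&=\sum_{\substack{S:\ |S|\ge 2\\ |\supp(S)|\le k}} q^{|S|}\,n^{2k-2|\supp(S)|}
\;\le\;\sum_{i=3}^{k}\;\sum_{m\ge \max(2,\,i/2)} O_f(n^{i})\, q^{m}\, n^{2k-2i}\\
&=\sum_{i,m} O_f\!\left((2\alpha)^{m}\, n^{2k-i-m}\right)
\;=\; O_f\!\left(\alpha^{2}\, n^{2k-5}\right),
\end{align*}
where the last step uses $\alpha<1$ with $m\ge 2$ (so $\alpha^{m}\le\alpha^{2}$), $i+m\ge 5$ (so $n^{2k-i-m}\le n^{2k-5}$), and the fact that there are only $O_f(1)$ admissible pairs $(i,m)$. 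Edge sets with $|\supp(S)|>k$ are either dropped, since $\hat F$ vanishes there, or contribute only $o(\alpha^{2}n^{2k-3})$ in expectation, being damped by the same weight $n^{2k-2|\supp(S)|}$.

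\textbf{Conclusion and main obstacle.} By Markov's inequality, $\Pr\!\left[\sum_{S\subseteq H,\,|S|\ge 2} n^{2k-2|\supp(S)|}>C\alpha^{2}n^{2k-3}\right]=O_f\!\left(1/(Cn^{2})\right)=o(1)$ once $C=C(f)$ is taken to absorb the implied constant, so a union bound with the size estimate shows that both required properties hold with probability $1-o(1)>0$ for all large $n$; hence an admissible $H$ exists. The only real content is the weight estimate: one must check that the sparsity factor $q^{|S|}=(2\alpha/n)^{|S|}$ outweighs the roughly $n^{|\supp(S)|}$ edge sets of each shape, and this works precisely because $|S|\ge|\supp(S)|/2$ together with $|S|\ge 2$ supplies the two spare powers of $n$ between $n^{2k-5}$ and the allowed $n^{2k-3}$, while $\hat F(S)=0$ for $|\supp(S)|>k$ keeps the list of shapes---and therefore $C$---dependent only on $f$.
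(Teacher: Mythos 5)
Your random-sampling approach is a genuinely different construction from the paper's (which takes $H$ to be a disjoint union of $\lfloor n/\ell\rfloor$ cliques of size $\ell=\lfloor\alpha n\rfloor$), and the first-moment calculation you do is clean. However, your choice $q=2\alpha/n$ targets $|H|=\Theta(\alpha n)$, and this is the wrong scale. The ``$|H|\ge \alpha n/2$'' in the statement of Claim \ref{goodgraph} is evidently a typo for ``$|H|\ge \alpha n^2/2$'': the paper's own construction produces $\binom{\ell}{2}\lfloor n/\ell\rfloor = \Theta(\alpha n^2)$ edges, and the downstream use of this claim in the proof of Claim \ref{goodcase} crucially needs $|H|=\Theta(\alpha n^2)$, since the bound there is $\E[e^{itX}]\le\exp\left(-\sum_{e\in H}\frac{t^2}{2\pi^2 n^2}\right)=\exp\left(-\frac{\alpha n^2}{4}\cdot\frac{t^2}{2\pi^2n^2}\right)=\exp\left(-\frac{\alpha t^2}{8\pi^2}\right)$. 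With $|H|=\Theta(\alpha n)$ this factor would be only $\exp\left(-\Theta(\alpha t^2/n)\right)$, which with the later choice $\alpha=t^{-2+\epsilon}$ and $t\le n^{1/2+\epsilon/4}$ is $1-o(1)$ rather than $o(1)$, so Lemma \ref{midtbound} (and the whole mid-$t$ regime) would fail.

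The fix is simple and your method survives it: take $q=\Theta(\alpha)$, a constant times $\alpha$, rather than $q=2\alpha/n$. Then $\E|H|=q\binom{n}{2}=\Theta(\alpha n^2)$, and the same moment computation gives
$$\E\left[\sum_{\substack{S\subseteq H\\ |S|\ge 2,\ |\supp(S)|\le k}} n^{2k-2|\supp(S)|}\right]=\sum_{i=3}^{k}\sum_{m\ge\max(2,\lceil i/2\rceil)}O_f\left(n^i\,\alpha^m\,n^{2k-2i}\right)=O_f\left(\alpha^2 n^{2k-3}\right),$$
with the dominant term $(i,m)=(3,2)$; Markov plus Chernoff then finish as you describe. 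Two further minor points: (1) your side remark that $|\supp(S)|>k$ terms contribute $o(\alpha^2n^{2k-3})$ is unnecessary and, at the smaller $q$, not obviously true as stated (the combinatorial factor $\binom{\binom{i}{2}}{m}$ grows superexponentially in $i$ and $\binom{n}{i}n^{-i}$ does not decay); the right thing to say, as you do at first, is that the sum is implicitly restricted to $|\supp(S)|\le k$ since those are the only $S$ for which $\widehat{Z_\beta}(S)$ can be nonzero, which is exactly what the paper's own proof does. (2) With $q=\Theta(\alpha)$ (rather than $\Theta(\alpha/n)$), the resulting expected weight is $\Theta(\alpha^2 n^{2k-3})$, matching the claimed bound only up to the constant $C$ absorbed in Markov, so you should keep the cushion $C$ generous rather than observe spare powers of $n$.
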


\begin{claim}\label{GoodEvent}
Let $A$ be the event (over the space of revelations $\beta \in \{0,1\}^{H^c}$) that for \emph{every} edge $e\in H$ we have that
$$|\widehat{Z_\beta}(e)-\hat{Z}(e)|<\frac{1}{n^{1.4}}$$
Let $\lambda:=\min(p,1-p)$.  Then $\Pr(A)\ge 1-n^2\lambda^2e^{-\Omega\left(\lambda n^{\frac{0.1}{k^2}}\right)}$.
\end{claim}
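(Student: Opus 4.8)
The plan is to replay the triangle-case argument (Claim \ref{GoodEvent} and its proof in Section \ref{midt}) in this more general setting. Fix an edge $e\in H$, where $H$ is the set furnished by Claim \ref{goodgraph}. View the coefficient $\widehat{Z_\beta}(e)$ as a bounded-degree polynomial in the revealed bits $\beta\in\{0,1\}^{H^c}$, show via hypercontractivity (Theorem \ref{Hypercontractivity}) that it concentrates tightly about its mean $\hat Z(e)$, and finish with a union bound over the at most ${n\choose 2}$ edges of $H$. For the polynomial setup, note that equation \eqref{Coefficients} holds verbatim for any graph statistic (its derivation used only how restriction acts on the $p$-biased basis), so $\widehat{Z_\beta}(e)=\sum_{T\subset H^c}\hat Z(e\cup T)\,\chi_T(\beta)$. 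Since $\hat Z(S)=0$ whenever $|\supp(S)|>k$, only sets $T$ with $e\cup T$ supported on at most $k$ vertices contribute, so $|T|\le{k\choose 2}$ and $\widehat{Z_\beta}(e)$ has degree $d\le{k\choose 2}$ in $\beta$; the $T=\varnothing$ term is its constant term, so $\E_\beta[\widehat{Z_\beta}(e)]=\hat Z(e)$.

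The heart of the argument is the variance estimate. By orthonormality,
$$\mathrm{var}_\beta\bigl(\widehat{Z_\beta}(e)\bigr)=\sum_{\varnothing\ne T\subset H^c}\hat Z(e\cup T)^2=\sum_{i=3}^{k}\ \sum_{\substack{T\subset H^c\\ |\supp(e\cup T)|=i}}\hat Z(e\cup T)^2,$$
where the outer sum starts at $i=3$ because $|\supp(e\cup T)|=2$ would force $T\subset\{e\}$, impossible since $e\notin H^c$. For each such $i$ there are $O(n^{i-2})$ choices of the $i-2$ vertices of $\supp(T)$ lying outside $e$ and only $O_k(1)$ edge sets $T$ on the resulting $i$ vertices, while \eqref{subgraphfouriercoeff} — which gives $\hat F(S)=\Theta(n^{k-|\supp(S)|})$ — together with the edge-dominance bound $\sigma=\Theta(n^{k-1})$ from \eqref{subgraphvar} yields $|\hat Z(e\cup T)|=|\hat F(e\cup T)|/\sigma=O(n^{k-i}/n^{k-1})=O(n^{1-i})$. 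Hence $\mathrm{var}_\beta(\widehat{Z_\beta}(e))=\sum_{i=3}^{k}O(n^{i-2})\cdot O(n^{2-2i})=O(n^{-3})$, i.e. $\|\widehat{Z_\beta}(e)-\hat Z(e)\|_2=O(n^{-3/2})$.

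Now I apply Theorem \ref{Hypercontractivity} to the degree-$d$ polynomial $\widehat{Z_\beta}(e)-\hat Z(e)$ (we may assume it is nonconstant, else the probability is $0$). Writing the target threshold as $n^{-1.4}=s\,\|\widehat{Z_\beta}(e)-\hat Z(e)\|_2$ forces $s=\Omega(n^{0.1})$, which for $n$ large exceeds the required $\sqrt{2e/\lambda}^{\,d}$ (a constant depending only on $p$ and $k$), so
$$\Pr\Bigl(\bigl|\widehat{Z_\beta}(e)-\hat Z(e)\bigr|\ge n^{-1.4}\Bigr)\le\lambda^{d}\exp\!\Bigl(-\tfrac{d}{2e}\,\lambda\, s^{2/d}\Bigr)\le\lambda^{2}\exp\!\Bigl(-\Omega\bigl(\lambda\, n^{0.1/k^2}\bigr)\Bigr),$$
using $s^{2/d}\ge s^{2/{k\choose 2}}=\Omega(n^{0.2/{k\choose 2}})=\Omega(n^{0.1/k^2})$ and absorbing the $k$-dependent constants (in particular $d\le{k\choose 2}$). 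A union bound over the at most ${n\choose 2}\le n^2$ edges of $H$ then gives $\Pr(A^c)\le n^2\lambda^2 e^{-\Omega(\lambda n^{0.1/k^2})}$, which is the claim.

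I expect the variance estimate to be the main obstacle: it is the one place where the precise Fourier-coefficient asymptotics $\hat F(S)=\Theta(n^{k-|\supp(S)|})$ must be combined with the edge-dominance lower bound $\sigma=\Theta(n^{k-1})$ to conclude that the ``coefficient polynomial'' $\widehat{Z_\beta}(e)$ genuinely has tiny $\ell_2$-fluctuation. The only other subtlety is bookkeeping — the polynomial degree ${k\choose 2}$ enters the hypercontractive tail and thereby softens the exponent from the $n^{0.1}$ available for triangles ($k=3$) to $n^{\Theta(1/k^2)}$ — but this costs nothing in the final application, where any super-polynomially small $\Pr(A^c)$ suffices.
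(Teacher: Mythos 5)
Your proposal is correct and takes essentially the same route as the paper: identify $\widehat{Z_\beta}(e)$ as a degree-$\le{k\choose 2}$ polynomial in $\beta$ with mean $\hat Z(e)$, bound its variance by $O(n^{-3})$ using the decay $|\hat Z(e\cup T)|=O(n^{1-|\supp(e\cup T)|})$ (which in turn comes from $\hat F(S)=O(n^{k-|\supp(S)|})$ and $\sigma=\Theta(n^{k-1})$), then invoke hypercontractivity and union-bound over the $O(n^2)$ edges of $H$. Your write-up is if anything slightly more explicit than the paper's about the ratio $s=n^{-1.4}/\|\cdot\|_2=\Omega(n^{0.1})$ and about where the $n^{0.1/k^2}$ exponent and the $\lambda^2$ prefactor come from, but the substance is identical.
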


\begin{claim}\label{goodtimes}
Let $B$  be the event (over the space of revelations $\beta \in \{0,1\}^{H^c}$) that for \emph{every} set $S\subset E$ with $|S|\ge 2$
$$|\widehat{Z_\beta}(S)|\le C n^{k-s}$$
where $C$ is the fixed constant  $C:=h_*2^{{k\choose 2}}+1$ and $s=|\supp(S)|$.
Let $\lambda:=\min(p,1-p)$.  Then $\Pr(B)\ge 1-O\left(n^k e^{-\Omega\left(n^{\frac{2}{k^2}}\right)}\right)$
\end{claim}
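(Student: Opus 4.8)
The plan is to run the same hypercontractivity argument used for single edges in Claim \ref{GoodEvent}, now applied to the higher-weight coefficients. Just as in the triangle case (equation \ref{Coefficients}), for $S\subset H$ we have
$$\widehat{Z_\beta}(S)=\sum_{T\subset H^c}\chi_T(\beta)\,\hat Z(S\cup T),$$
so, as a function of the revealed edges, $\widehat{Z_\beta}(S)$ is a polynomial in the $p$-biased variables $\{\chi_e(\beta):e\in H^c\}$ whose Fourier coefficient at $T$ is $\hat Z(S\cup T)$. Since $\hat Z(S\cup T)=\hat F(S\cup T)/\sigma$ vanishes unless $S\cup T$ lies inside a $k$-vertex subgraph (equation \ref{subgraphfouriercoeff}), only $T$ with $|T|\le {k\choose 2}$ occur, so this polynomial has degree at most ${k\choose 2}$, a constant, and by Parseval $\|\widehat{Z_\beta}(S)\|_2^2=\sum_{T\subset H^c}\hat Z(S\cup T)^2$.

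Next I would estimate this $L^2$ norm. Grouping the summands by $s'=|\supp(S\cup T)|$, there are $O(n^{s'-s})$ sets $T$ with a given such value (choose the $\le k-s$ new vertices, then an $O(1)$-size set of new edges), and by equation \ref{subgraphfouriercoeff} together with $\sigma=\Theta(n^{k-1})$ each contributes $\hat Z(S\cup T)^2=O\big(h_*^2 n^{2(k-s')}/\sigma^2\big)=O(h_*^2 n^{2-2s'})$; so the $s'$-block is $O(h_*^2 n^{2-s-s'})$, dominated by $s'=s$, and $\|\widehat{Z_\beta}(S)\|_2=O(h_* n^{1-s})$, which is $O(h_*/n)$ since $s\ge 2$. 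Now apply Theorem \ref{Hypercontractivity} to the degree-$\le{k\choose 2}$ polynomial $\widehat{Z_\beta}(S)$ (if $\widehat{Z_\beta}(S)\equiv 0$ the bound is trivial, so assume $\|\widehat{Z_\beta}(S)\|_2>0$): writing $\nu:=C n^{k-s}/\|\widehat{Z_\beta}(S)\|_2$, the $s$-dependence cancels and $\nu=\Omega\big(n^{k-s}/(h_* n^{1-s})\big)=\Omega_k(n^{k-1})$, which is polynomially large and independent of $s$, so the threshold hypothesis of Theorem \ref{Hypercontractivity} holds for all large $n$ and
$$\Pr\big(|\widehat{Z_\beta}(S)|\ge C n^{k-s}\big)\le \lambda^{{k\choose 2}}\exp\!\Big(-\tfrac{{k\choose 2}}{2e}\,\lambda\,\nu^{2/{k\choose 2}}\Big)\le \lambda^{{k\choose 2}}\exp\!\big(-\Omega(n^{2/k^2})\big),$$
where the last step uses $2(k-1)/{k\choose 2}\ge 2/k^2$ for $k\ge 2$.

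It remains to union bound. If $\widehat{Z_\beta}(S)$ is not identically zero then $\supp(S)$ spans at most $k$ vertices, and the number of $S\subset H$ with $|\supp(S)|\le k$ and $|S|\ge 2$ is at most $\sum_{i=2}^k {n\choose i}2^{{i\choose 2}}=O(n^k)$; for every other $S$ the inequality $|\widehat{Z_\beta}(S)|\le Cn^{k-s}$ holds vacuously. Hence $\Pr(B^c)\le O(n^k)\cdot\lambda^{{k\choose 2}}\exp(-\Omega(n^{2/k^2}))=O\big(n^k e^{-\Omega(n^{2/k^2})}\big)$, which is the claim.

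The one delicate point is the middle paragraph: one must confirm that $\|\widehat{Z_\beta}(S)\|_2$ really scales like $n^{1-s}$ (rather than, say, $n^{k-s}$), since this is exactly what makes the hypercontractive parameter $\nu$ grow like a fixed positive power of $n$ uniformly over all relevant $S$, so that a single stretched-exponential bound survives the union bound; the rest is a routine transcription of the triangle-case estimates into the graph-statistics notation of Section \ref{graphstatsection}.
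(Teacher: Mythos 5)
Your proof does establish the claim \emph{as literally written}, and your intermediate computation $\|\widehat{Z_\beta}(S)\|_2 = O(h_* n^{1-s})$ (including the constant term via Parseval) is correct. However, there appears to be a normalization slip threading through Claims \ref{goodtimes}--\ref{middlet} in the paper: since $\hat Z(S) = \hat F(S)/\sigma$ and $\sigma=\Theta(n^{k-1})$, the natural scale of $\widehat{Z_\beta}(S)$ is $\Theta(n^{1-s})$, not $\Theta(n^{k-s})$, and the bound actually \emph{needed} in Claim \ref{goodcase} (so that $\E|Y| = O(\alpha n^{-1/2})$, which after choosing $\alpha\sim t^{-2+\epsilon}$ yields the advertised $|\varphi_Z(t)| = O(n^{-1/2}t^{-1+\epsilon})$) is $|\widehat{Z_\beta}(S)|\le Cn^{1-s}$. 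The stated threshold $Cn^{k-s}$ is off by precisely the factor $\sigma$, and the paper's own variance computation for $g$ consistently drops that same $\sigma^2$, so the errors cancel downstream.

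Under that correct interpretation your approach has a real gap. Applying hypercontractivity directly to $\widehat{Z_\beta}(S)$ gives a deviation threshold $\nu = Cn^{1-s}/\|\widehat{Z_\beta}(S)\|_2 = \Theta(1)$, which neither satisfies the hypothesis $\nu \ge \sqrt{2e/\lambda}^{\binom{k}{2}}$ in Theorem \ref{Hypercontractivity} in a useful way nor produces a tail small enough to survive a union bound over $\Theta(n^k)$ sets $S$ — the resulting probability would be a constant, not stretched-exponentially small. You correctly flagged the scaling $\|\widehat{Z_\beta}(S)\|_2 \sim n^{1-s}$ as "the one delicate point," but its actual implication cuts the other way: it tells you that the bound is at the scale of the $L^2$ norm itself, so hypercontractivity applied wholesale cannot give concentration. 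The paper's proof sidesteps this by splitting $\widehat{Z_\beta}(S)$ into the terms with $|\supp(S\cup T)| = s$ — there are at most $2^{\binom{s}{2}}$ of these, each $\chi_T(\beta)$ involves $O(1)$ revealed edges and is pointwise bounded, so this part is \emph{deterministically} $O(h_* n^{1-s})$ — plus the remainder $g := \sum_{|\supp(S\cup T)|>s}\hat Z(S\cup T)\chi_T(\beta)$, for which the dominant $s'=s$ contribution is gone and $\|g\|_2 = O(n^{1/2-s})$, smaller by a factor $n^{-1/2}$. Hypercontractivity is applied only to $g$, making the threshold $\nu' = \Omega(n^{1/2})$, so the tail is $\exp(-\Omega(n^{1/\binom{k}{2}}))$ and the union bound over $O(n^k)$ sets goes through. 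This decomposition is the essential step your proposal omits, and it is what makes the claim usable for the estimates that follow.
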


\begin{claim}\label {goodcase}
Assume $\beta \in A\cap B$.  Then for any $\alpha\in (n^{-1+\epsilon}, 1)$ and $t=o(n)$
$$\E_{x\in 2^{H}}[ e^{itZ_\beta}]\le \exp\left(-\frac{\alpha t^2}{8\pi^2}\right)+O\left(|t|\alpha n^{k-\frac{3}{2}}\right)$$
\end{claim}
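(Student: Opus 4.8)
The plan is to mimic the argument for the triangle case (Claim~\ref{GoodBound}), splitting $Z_\beta$ into its weight-one part and a higher-order error term. Concretely, write $Z_\beta = X + Y$ where $X := \sum_{e\in H}\widehat{Z_\beta}(e)\chi_e$ and $Y := \sum_{|S|\ge 2}\widehat{Z_\beta}(S)\chi_S$ (the sum in $Y$ only ranging over $S\subset H$). Since $\beta\in A$ we have $\widehat{Z_\beta}(e) = \hat Z(e) + O(n^{-1.4})$ for every $e\in H$, and from \eqref{subgraphfouriercoeff}--\eqref{subgraphvar} we know $\hat Z(e) = \Theta(1/n)$; hence each $\widehat{Z_\beta}(e) = \Theta(1/n)$. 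Applying Lemma~\ref{Bernoulli} to each independent summand $\widehat{Z_\beta}(e)\chi_e$ (valid since $t = o(n)$ makes $t\,\widehat{Z_\beta}(e)$ smaller than the allowed threshold), we get $|\E[e^{it\widehat{Z_\beta}(e)\chi_e}]| \le \exp(-c\,t^2/n^2)$ for a constant $c>0$ depending on $f$ and $p$. Multiplying over the $|H|\ge \alpha n/2$ edges of $H$ gives
\begin{align*}
\bigl|\E[e^{itX}]\bigr| = \prod_{e\in H}\bigl|\E[e^{it\widehat{Z_\beta}(e)\chi_e}]\bigr| \le \exp\!\left(-\frac{c\,\alpha n}{2}\cdot\frac{t^2}{n^2}\right) \le \exp\!\left(-\frac{\alpha t^2}{8\pi^2}\right),
\end{align*}
after absorbing the constants appropriately (this is where the constant $8\pi^2$ is calibrated to match Lemma~\ref{Bernoulli}'s bound $1 - 2t^2/\pi^2$).

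Next I would bound $\E|Y|$ via Cauchy--Schwarz and orthonormality: $\E|Y|^2 = \sum_{|S|\ge 2,\,S\subset H}\widehat{Z_\beta}(S)^2$. Using $\beta\in B$, each term is at most $C^2 n^{2k-2s}$ with $s=|\supp(S)|$, so $\E|Y|^2 \le C^2\sum_{S\subset H,\,|S|\ge 2} n^{2k-2|\supp(S)|}$. Now Claim~\ref{goodgraph} is exactly what controls this sum: it gives $\sum_{S\subset H,\,|S|\ge 2} n^{2k-2|\supp(S)|}\le C'\alpha^2 n^{2k-3}$. Therefore $\E|Y| \le \sqrt{\E|Y|^2} = O(\alpha n^{k-3/2})$. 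Finally, applying the mean value theorem to $e^{itx}$ as in the earlier proofs, $|\E_{\alpha\in 2^H}[e^{itZ_\beta}]| \le |\E[e^{itX}]| + \E|tY| \le \exp(-\alpha t^2/(8\pi^2)) + O(|t|\alpha n^{k-3/2})$, which is the claim.

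The main obstacle — and the step requiring the most care — is verifying that the event $B$ genuinely gives a bound of the shape $\widehat{Z_\beta}(S)=O(n^{k-s})$ uniformly and that this bound is the right one to feed into Claim~\ref{goodgraph}'s sum. One has to check that the normalization by $\sigma = \Theta(n^{k-1})$ turns the raw coefficient estimate $\hat F(S\cup T) = \Theta(n^{k-|\supp(S\cup T)|})$ from \eqref{subgraphfouriercoeff} into $\hat Z(S\cup T) = O(n^{-|\supp(S\cup T)|+1})$ (using $h_* < \infty$ from \eqref{suph}), and that summing $\chi_T(\beta)\hat Z(S\cup T)$ over $T\subset H^c$ as in \eqref{Coefficients} preserves the exponent $n^{k-s}$ after normalization — here the worst case is $T=\varnothing$, since adding vertices to the support only decreases the exponent, and the number of relevant $T$ is bounded by a constant depending only on $k$. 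A secondary subtlety is ensuring the hypotheses of Lemma~\ref{Bernoulli} are met for the whole range $t = o(n)$: since $\widehat{Z_\beta}(e)\chi_e$ has standard deviation $\Theta(1/n)$, the rescaled variable $\widehat{Z_\beta}(e)\chi_e$ needs $|t\,\widehat{Z_\beta}(e)| < \sqrt{p(1-p)}\,\pi$, i.e. $|t| = O(n)$, which holds with room to spare. Everything else is the routine bookkeeping already rehearsed in Claims~\ref{GoodEvent} and~\ref{GoodBound}.
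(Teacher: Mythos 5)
Your overall structure — split $Z_\beta = X+Y$, bound $\E[e^{itX}]$ by Lemma~\ref{Bernoulli} and independence over $H$, bound $\E|tY|$ via Cauchy--Schwarz and the event $B$, then combine by the mean value theorem — is exactly the paper's approach for Claim~\ref{goodcase}. The $Y$-bound you give matches the paper's line for line, including feeding the $\beta\in B$ estimate $|\widehat{Z_\beta}(S)|\le Cn^{k-|\supp(S)|}$ into the sum controlled by Claim~\ref{goodgraph}.

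However, there is a genuine arithmetic gap in your $X$-bound. You took the stated bound $|H|\ge \alpha n/2$ from Claim~\ref{goodgraph} at face value, and wrote
\[
\bigl|\E[e^{itX}]\bigr| \le \exp\!\left(-\frac{c\,\alpha n}{2}\cdot\frac{t^2}{n^2}\right) = \exp\!\left(-\frac{c\,\alpha t^2}{2n}\right),
\]
claiming this is $\le \exp(-\alpha t^2/(8\pi^2))$ ``after absorbing constants.'' But no constant absorbs a factor of $n$: your middle expression tends to $1$ as $n\to\infty$ for fixed $t$, while the target $\exp(-\alpha t^2/(8\pi^2))$ is bounded away from $1$. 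The statement $|H|\ge \alpha n/2$ in Claim~\ref{goodgraph} is a typo. Its proof builds $\lfloor n/\ell\rfloor$ disjoint $\ell$-cliques with $\ell=\lfloor\alpha n\rfloor$, giving $\binom{\ell}{2}\lfloor n/\ell\rfloor \approx \alpha n^2/2$ edges, and the paper's proof of Claim~\ref{goodcase} uses $|H|\ge \alpha n^2/4$, not $\alpha n/2$. With the correct $|H| = \Theta(\alpha n^2)$ your product gives $\exp(-\Theta(\alpha n^2)\cdot t^2/n^2) = \exp(-\Theta(\alpha t^2))$, which is what the claim requires. Since each per-edge factor contributes only $\exp(-\Theta(t^2/n^2))$, you must have $\Theta(n^2)$ independent factors, not $\Theta(n)$, for this step to do anything; the quadratic edge count is not an optional constant but the engine of the argument.
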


\begin{claim}\label{middlet}
For $\alpha \in (n^{-1+\epsilon},1)$
 we have that 
$$|\varphi_Z(t)|<\exp\left(-\frac{\alpha t^2}{8\pi^2}\right)+O\left(\alpha |t| n^{k-\frac{3}{2}}+n^k e^{-\Omega\left(n^{\frac{2}{k^2}}\right)}+n^2e^{-\Omega\left(\lambda n^{\frac{0.1}{k^2}}\right)}\right)$$
\end{claim}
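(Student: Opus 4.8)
The plan is to combine Claims \ref{goodgraph}, \ref{GoodEvent}, \ref{goodtimes}, and \ref{goodcase} in the same way that Lemma \ref{midtbound} was deduced from Claims \ref{GoodEvent} and \ref{GoodBound} in the triangle case. First I would fix $\epsilon>0$ and $\alpha\in(n^{-1+\epsilon},1)$, and apply Claim \ref{goodgraph} to obtain a set of edges $H\subset{[n]\choose 2}$ with $|H|\ge \alpha n/2$ and $\sum_{S\subset H,\,|S|\ge 2} n^{2k-2|\supp(S)|}\le C\alpha^2 n^{2k-3}$. This set $H$ will play the role of the $k$-regular bipartite graph from the triangle argument: revealing the complementary edges $\beta\in\{0,1\}^{H^c}$ only mildly perturbs the weight-one coefficients $\widehat{Z_\beta}(e)$ while shrinking the higher-weight ones substantially. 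The events $A$ (weight-one coefficients stay close to $\hat Z(e)$, from Claim \ref{GoodEvent}) and $B$ (all higher-weight coefficients remain bounded by $Cn^{k-s}$, from Claim \ref{goodtimes}) are the ``typical revelation'' events; by those two claims together with a union bound,
\begin{align*}
\Pr\big((A\cap B)^c\big)\le \Pr(A^c)+\Pr(B^c)= O\Big(n^2 e^{-\Omega(\lambda n^{0.1/k^2})}+n^k e^{-\Omega(n^{2/k^2})}\Big).
\end{align*}

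Next I would use the law of total probability in the form $\varphi_Z(t)=\E_{\beta}\E_{\alpha}[e^{itZ_\beta(\alpha)}]$ together with the triangle inequality to split on whether $\beta\in A\cap B$:
\begin{align*}
|\varphi_Z(t)|\le \E_{\beta}\big|\E_{\alpha}[e^{itZ_\beta(\alpha)}]\big|\le \Pr\big((A\cap B)^c\big)+\E_{\beta\in A\cap B}\big|\E_{\alpha}[e^{itZ_\beta(\alpha)}]\big|.
\end{align*}
For the first term I insert the union bound above; for the second I invoke Claim \ref{goodcase}, which (for $t=o(n)$, hence certainly in the range contemplated here) gives $\big|\E_{\alpha}[e^{itZ_\beta(\alpha)}]\big|\le \exp(-\alpha t^2/8\pi^2)+O(|t|\alpha n^{k-3/2})$ for every $\beta\in A\cap B$, so the same bound survives the average over $\beta\in A\cap B$. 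Adding the two contributions yields exactly
\begin{align*}
|\varphi_Z(t)|<\exp\!\left(-\frac{\alpha t^2}{8\pi^2}\right)+O\!\left(\alpha|t|n^{k-3/2}+n^k e^{-\Omega(n^{2/k^2})}+n^2 e^{-\Omega(\lambda n^{0.1/k^2})}\right),
\end{align*}
which is the assertion of Claim \ref{middlet}.

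Since the four supporting claims are taken as given, the only real content here is bookkeeping: checking that the hypotheses of Claim \ref{goodcase} ($\beta\in A\cap B$ and $t=o(n)$) are genuinely available, that $H$ from Claim \ref{goodgraph} is the same $H$ implicitly used to define the events in Claims \ref{GoodEvent} and \ref{goodtimes}, and that the error terms combine cleanly under the $O(\cdot)$. The main obstacle, such as it is, is ensuring consistency of the parameter $\alpha$ and the choice of $H$ across all four claims so that the conditioning argument is valid for the full stated range $\alpha\in(n^{-1+\epsilon},1)$; the analytic estimates themselves are entirely delegated to the cited claims. (In the subsequent application one optimizes $\alpha$ against $t$ — e.g. $\alpha\approx n|t|^{-2+\epsilon}$ — to extract Theorem \ref{subgraphmain2}, but that optimization is not part of this claim.)
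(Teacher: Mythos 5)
Your proposal matches the paper's proof essentially line for line: both condition on the revelation $\beta$, split according to the event $A\cap B$, use the trivial bound $1$ on $(A\cap B)^c$ to contribute $\Pr((A\cap B)^c)$, invoke Claim \ref{goodcase} on $A\cap B$, and control $\Pr((A\cap B)^c)$ by a union bound via Claims \ref{GoodEvent} and \ref{goodtimes}. Your explicit flagging of the implicit $t=o(n)$ hypothesis inherited from Claim \ref{goodcase} and of the role of Claim \ref{goodgraph} in supplying the common graph $H$ is consistent with what the paper leaves tacit, so there is no gap.
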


Theorem \ref{subgraphmain2} now follows simply by making a good choice of $\alpha$.
\begin{proof}[Proof of Theorem \ref{subgraphmain2}]
We can now fix $\alpha$ to be of size $t^{-2+\epsilon}$, which  is feasible for the hypothesis of Claim \ref{goodgraph} so long  as we assure that $n^\epsilon<t<n^{\frac12+\frac{\epsilon}{4}}$, and so $\alpha>n^{-1+\epsilon/2}$.
Plugging this choice of $\alpha$ into Claim \ref{middlet} completes the proof.
\end{proof}

\subsubsection{Proof of Claims}
\begin{proof}[Proof Of Claim \ref{goodgraph}]
Fix $\ell=\lfloor \alpha n \rfloor$.  Let $H$ be the subgraph given by taking the union of $\lfloor\frac{n}{\ell}\rfloor$ disjoint cliques of size $\ell$, and the remaining vertices with no edges.
The number of edges in $H$ is 
$${\ell \choose 2}\lfloor \frac{n}{\ell}\rfloor\ge \frac{n(\ell-1)}{2}-{\ell\choose 2}\ge \frac{\alpha n^2}{2}-\ell^2-\frac{n}{2}$$

Meanwhile the number of subgraphs of $H$ with support of size $|\supp(S)|=i$  is upper bounded by
$$\lfloor\frac{n}{\ell}\rfloor\left({\ell \choose i}2^{{i\choose 2}}\right)\le n\ell^{i-1}2^{i^2}=\alpha^{i-1}n^i(2^{i^2}+O(\frac{1}{\ell}))$$
So we can see that the number of edges is at least $\frac{\alpha n}{2}$ for $n$ sufficiently large.  Further we can compute that
\begin{align*}
\sum_{\substack{S\in H\\|S|\ge 2}} n^{2k-2|\supp(S)|}&\le \sum_{i=3}^k\sum_{\substack{S\in H\\|\supp(S)|=i}} n^{2k-2i}\le \sum_{i=3}^{k} 
(2^{i^2}+O(\frac{1}{\ell}))\alpha^{i-1}n^in^{2k-2i}\\
&\le (k+O(\frac{1}{\ell}))\alpha^2n^{2k-i}=O(\alpha^2n^{2k-3})
\end{align*}
Where the last inequality is justified by the assumption that $\ell \ge h(n)$ where $n\to \infty$.
\end{proof}

We prove Claim \ref{GoodEvent} by noting that the formula for $\widehat{Z_\beta}(S)$ (a coefficient in the polynomial $Z_\beta$) is \emph{itself} a low degree polynomial, and therefore may be shown to have tight concentration by hypercontractivity. 

\begin{proof}[Proof Of Claim \ref{GoodEvent}]

Recall that 
$$\widehat{Z_\beta}(e)=\sum_{T\subset H^c} \hat Z(e\cup T)\chi_{T}(\beta)$$

So $\widehat{Z_\beta}(e):\{0,1\}^{H^c}\to\mathbb{R}$ is a polynomial (in the functions $\chi_e$), and we can began by estimating its coefficients.
First we see that 
$$\E[\widehat{Z_\beta}(e)] =\widehat{\widehat{Z}_\beta(e)}(\varnothing)=\hat{Z}(e)$$

Also for any $T\subset \{0,1\}^{H^c}$ we know that $\hat{Z}(e\cup T)\neq 0$ only if $|\supp(e\cup T)|\le k$.  So we can compute:

\begin{align*}
Var_\beta( \widehat{Z_\beta}(e))&=\sum_{\substack{T\subset H^c\\T\neq \varnothing}} \hat{Z}(e\cup T)^2=\sum_{i=3}^k \sum_{\substack{T\subset H^c\\|\supp(T\cup e)|=i}} \hat Z(e\cup T)^2\\
&\le \sum_{i=3}^k\sum_{|\supp(T\cup e)|=i} \hat Z(e\cup T)^2\le \sum_{i=3}^k {n-2 \choose i-2}\frac{h_*^2n^{2(k-i)}}{\sigma^2}\le k h_*^2\frac{n^{2k-5}}{\sigma^2}\\
&=O\left(\frac{1}{n^3}\right)
\end{align*}

Since $\widehat{Z_\beta}(e)$ has degree less than ${k\choose 2}$, an application of Theorem \ref{Hypercontractivity} gives us that for any $e\in H$ if we set $\lambda=\min(p,1-p)$ then
$$\Pr\left[\left|\widehat{Z_\beta}(e)-\hat Z(e)\right|\ge \frac{1}{n^{1.4}}\right]<\lambda^2\exp\left(-\Omega\left(\frac{\lambda n^{\frac{0.1}{k^2}}}{e}\right)\right)$$
Applying a union bound over all edges in $H$ completes the proof.
\end{proof}

\begin{proof}[Proof of Claim \ref{goodtimes}]

Again we use the decomposition 
$$\widehat{Z_\beta}(S)=\sum_{T\subset H^c} \hat Z(S\cup T)\chi_{T}(\beta)$$

So $\widehat{Z_\beta}(S):\{0,1\}^{H^c}\to\mathbb{R}$ is a polynomial (in the functions $\chi_e$), and we can began by estimating its coefficients.
First we see that 
$$\E[\widehat{Z_\beta}(S)] =\widehat{\widehat{Z}_\beta(S)}(\varnothing)=\hat{Z}(S)$$

Assume $|\supp(S)|=s$.  For any $T\subset \{0,1\}^{H^c}$ we know that $\hat{Z}(S\cup T)\neq 0$ iff $|\supp(S\cup T)|\le k$.  There are at most $2^{{\ell\choose 2}}(n-s)\downarrow_{\ell-s}\le 2^{k^2}n^{\ell-s}$ choices of $T$ such that $|\supp(S\cup T)|=\ell$.  And further for each of these choices we know that $\hat Z(S\cup T)\le h_*n^{k-\ell}$.  
Let $g:=\sum_{\substack{T\subset H^c\\|\supp(S\cup T)|>s}} \hat Z(S\cup T)\chi_T(\beta)$
So we can compute that
\begin{align*}
Var(g)&\le \sum_{\ell=s+1}^k \sum_{|\supp(S\cup T)=\ell} \left(\hat{Z}(S\cup T)\right)^2 \le \sum_{\ell=s+1}^{k} 2^{k^2}n^{\ell-s} (h_*)^2n^{2k-2\ell}\\
&\le k2^{k^2}(h_*)^2 n^{2k-2s-1}
\end{align*}
Further we can see that $g$ is a polynomial of degree at most $2^{k\choose 2}$, and so by Hypercontractivity \ref{Hypercontractivity} we see that 
\begin{align*}
\Pr\left[|g|\ge n^{k-s}\right]&=\Pr\left[g\ge \frac{1}{ \sqrt{k2^{k^2}(h_*)^2} }\sqrt{n}\|g\|_2\right]\le \lambda^{{k\choose 2}}\exp\left(-\frac{{k\choose 2}}{2e}\lambda \left(\frac{t}{k2^{k^2}(h_*)^2}\right)^{\frac{2}{{k\choose 2}}}\right)\\
&=O(e^{-\Omega(n^{\frac{2}{k^2}})})
\end{align*}

If $|g|<n^{k-s}$ then we can conclude that 
\begin{align*}
\hat Z(S)=\sum_{|\supp(S\cup T)|=s} \hat Z(S\cup T)\chi_T(\beta)+g(\beta)\le 2^{{s\choose 2}} h_*n^{k-s}+n^{k-s}
\end{align*}
So for any $S\subset H$ we find that 
$|\widehat{Z_\beta}(S)|\le C n^{2k-2s}$  with probability at least $1-O(e^{-\Omega(n^{\frac{2}{k^2}})})$.  Taking a union bound over all such $S$ finishes the proof.
\end{proof}

\begin{proof}[Proof of Claim \ref{goodcase}]
Assume that $\beta\in A\cap B$.  Let $X$ and $Y$ be
\begin{align*}
X:=\sum_{e\in {[n]\choose 2}} \widehat{Z_\beta}(e) \chi_e&&Y:=\sum_{|S|\ge 2} \widehat{Z_\beta}(S) \chi_S
\end{align*}
then $Z_\beta=X+Y$, where $X$ is an independent sum, and $Y$ is likely small, so we will be able to obtain bounds similar to our previous ones.
Let $Q=\sqrt{\frac{1}{{n\choose 2}}}=(1+O(\frac{1}{n}))\hat Z(e)$, and that $Q\approx \frac{\sqrt{2}}{n}$.  Because of our assumption that $\beta\in A$ we have that
$$\frac{Q}{2}\le \hat{Z}(e)-n^{-1.4}\le \widehat{Z_\beta}(e) \le \hat{Z}(e)+n^{-1.4}\le \frac{3Q}{2}$$

Using our bound on $\widehat{Z_\beta}(e)$ and the fact that $t=o(1/Q)$ we may apply  Lemma $\ref{Bernoulli}$ to say that
$$|\E[e^{it\widehat{Z_\beta}(e)\chi_e}]|= 1-\frac{t^2Q^2}{2\pi^2}\le \exp\left(-\frac{t^2}{2n^2 \pi^2}\right)$$ 

So now we find that 
\begin{align*}
\E[e^{itX}]=\prod_{e\in H}\E[\exp\left(it\widehat{Z_\beta}(e)\chi_e\right)]\le \exp\left(-\sum_{e\in H}(t\widehat{Z_\beta(e)})^2\right)\le \exp\left(\sum_{e\in H}-\frac{t^2}{2\pi^2n^2}\right)=\exp\left(-\frac{\alpha n^2}{4}\cdot\frac{t^2}{2\pi^2n^2}\right)
\end{align*}

Next we turn our attention to $Y$.
We can use Cauchy Schwartz, the assumption that $\beta\in B$ and the fact that $H$ satisfies the conditions of Claim \ref{goodgraph} to bound
$$\E[|Y|]^2\le \E[|Y|^2]=\sum_{\substack{S\subset H\\|S|\ge 2}} \widehat{Z_\beta}^2(S)\le \sum_{\substack{S\subset H\\|S|\ge 2}} C^2 n^{2k-2|\supp(S)|}\le O(\alpha^2n^{2k-3})$$
Finally we combine all of these estimates to bound $\E_H[e^{itZ_\beta}]$ and finish the proof of Claim \ref{goodcase}

\begin{align*}
\left|\E_{\alpha \in 2^{H}} [e^{itZ_\beta(\alpha)}]\right|&=\left|\E_{\alpha}[e^{it(X+Y)}]\right|\le \left|\E[e^{itX}+|tY|]\right|\\
&\le\exp\left(-\frac{\alpha t^2}{8\pi^2}\right)+O\left(|t|\alpha n^{k-\frac{3}{2}}\right)
\end{align*}
\end{proof}
\begin{proof}[Proof of Claim \ref{middlet}]
Let $A$, and $B$ be as defined in Claims \ref{GoodEvent} and \ref{goodtimes}.  
We can break up $\{0,1\}^{H^c}$ into $A\cap B$ and $(A\cap B)^c$ and estimate

\begin{align*}
|\varphi_Z(t)|&:=\E_{(\alpha,\beta)\in 2^{n\choose 2}}[e^{itZ(\alpha,\beta)}]\le \E_{\beta\subset H^c}|\E_{\alpha\subset H}[e^{itZ_\beta(\alpha)}]|\le \Pr[(A\cap B)^c]+ \Pr[A\cap B]\E_{\beta \in (A\cap B)^c}\left|\E_{\alpha}[e^{itZ_\beta}]\right|
\end{align*}
Now combining Claims \ref{GoodEvent} and \ref{goodtimes} we find that

\begin{align*}
\Pr[(A\cap B)^c]+ Pr[A\cap B]\E_{\beta \in A}\left|\E_{\alpha}[e^{itZ_\beta}]\right|&\le  \exp\left(-\frac{\alpha t^2}{8\pi^2}\right)+O\left(\alpha |t| n^{k-\frac{3}{2}}+n^k e^{-\Omega\left(n^{\frac{2}{k^2}}\right)}+n^2e^{-\Omega\left(\lambda n^{\frac{0.1}{k^2}}\right)}\right)
\end{align*}

\end{proof}

\subsection{Middle values of $t$}
This subsection does not have a direct analog in the triangle case, as the tighter Cauchy-Schwarz bound given in \cite{JustinTriangles} may be used in that case.
The goal of this subsection is to prove
\begin{repthm}{subgraphmain3}
Fix $\epsilon>0$.  For $n^{\frac12+\epsilon}\le t\le n^{1-\epsilon}$ we have that
$$|\varphi_Z(t)|\le O\left(\frac{1}{tn^{1-\epsilon}}\right)$$
\end{repthm}

For $t\ge n^{\frac12+\epsilon}$ we use a different choice of $H$, the subgraph whose complement we reveal, from in the previous arguments.  Here we take $H$ to be a matching of size $\ell$.  Again let $\beta \in 2^{{H^c}}$ be a revelation of all of the edges in $H^c$ and look at

$$Z_\beta = Z(\x_H, \beta)=X_\beta+Y_\beta$$
where
\begin{align*}
X_\beta:=\sum_{e\in H} \widehat{Z_\beta}(e) \chi_e&&Y_\beta:=\sum_{\substack{S\subset H\\|S|\ge 2}} \widehat{Z_\beta}(S) \chi_S
\end{align*}
Because $H$ is a matching, any set $S\subset H$ has support of size $|\supp(S)|=2|S|$.  So assuming that we are again in the event $|A\cap B|$ (i.e. all of the Fourier coefficients are behaved nicely where $A$ and $B$ are as defined in Claims
\ref{GoodEvent} and \ref{goodtimes} respectively) we can compute that
\begin{align*}
\E[|Y_\beta|]^2&\le \E[|Y_\beta|^2]= \sum_{\substack{S\subset H\\|S|\ge 2}}\widehat{Z_\beta}^2(S)\le \sum_{i=2}^{2\ell} \sum_{\substack{S\subset H\\|S|=i}} \frac{C^2}{\sigma^2}n^{2k-4i}\\
&\le \sum_{i=2}^{2\ell} {\ell\choose i}\frac{C^2}{\sigma^2}n^{2k-4i}\le 2\ell^2\frac{C^2}{\sigma^2}n^{2k-8}
\end{align*}
So we have $\E[|Y_\beta|] = O(\ell n^{-3})$.
Meanwhile so long as $t\widehat{Z_\beta}(e)<\sqrt{p(1-p)}\pi $ we can use Lemma \ref{Bernoulli} to compute
\begin{align*}
\E[e^{itX_\beta}]&=\prod_{e\in H}\E[\exp\left(it\widehat{Z_\beta}(e)\chi_e\right)]\le \exp\left(-\sum_{e\in H}(t\widehat{Z_\beta(e)})^2\right)\\
&\le
\exp\left(\ell \frac{t^2}{2n^2}\right)
\end{align*}
So for $n^{\frac{1}{2}+\epsilon}<t<n^{1-\epsilon}$ we can choose $\ell =\lfloor \frac{n^{2+\epsilon}}{t^2}\rfloor\in [n/2]$ (the size bound verifying that there does exist a matching of size $\ell$) and then we find
$\E[e^{itX_\beta}]\le \exp(-n^\epsilon)$.
So then in total we have that if $\beta \in A\cap B$ then
$$\E_{H}[e^{itZ_\beta}]=\E[e^{it(X_\beta+Y_\beta)}]\le \E[e^{itX_\beta}+t|Y_\beta|]\le e^{-n^{\epsilon}}+O\left(\frac{t\ell}{n^3}\right)=e^{-n^{\epsilon}}+O\left(\frac{1}{tn^{1-\epsilon}}\right)$$
Also, arguing as we did in Claim \ref{middlet}, we can use Claims \ref{GoodEvent} and \ref{goodtimes} to show that $\Pr[\beta\in (A\cap B)^c]\le O\left(\frac{1}{tn^{1-\epsilon}}\right)$.  So we can now conclude that
$$\E[e^{itZ}]=\E_{\beta}\E_{H} e^{itZ}\le O\left(\frac{1}{tn^{1-\epsilon}}\right)$$
Concluding the proof.

\bibliographystyle{alpha}
\bibliography{TryNBib}
\end{document}